\newcommand{\autorefcheckize}[1]{%
\expandafter\let\csname @@\string#1\endcsname#1%
\expandafter\DeclareRobustCommand\csname relax\string#1\endcsname[1]{%
\csname @@\string#1\endcsname{##1}\wrtusdrf{##1}}%
\expandafter\let\expandafter#1\csname relax\string#1\endcsname
}
\declaretheorem[numberwithin=section]{theorem}
\declaretheorem[sibling=theorem, name=Lemma]{lem}
\declaretheorem[sibling=theorem, name=Corollary]{cor}
\declaretheorem[sibling=theorem, name=Remark]{rem}
\declaretheorem[numberwithin=section, name=Definition]{defn}
\declaretheorem[name=Problem]{prob}
\declaretheorem[name=Theorem]{main}
\numberwithin{equation}{section}
\newcommand{\norm}[1]{\left\lVert#1\right\rVert}
\newcommand{\abs}[1]{\left\lvert#1\right\rvert}
\newcommand{\set}[1]{\left\{#1\right\}}
\newcommand{\hin}[2]{\left\langle#1,#2\right\rangle}
\newcommand*{\Rmn}[1]{\uppercase\expandafter{\romannumeral#1}}
\newcommand*{\dif}{\mathop{}\!\mathrm{d}}
\DeclareMathOperator{\Div}{div}
\DeclareMathOperator{\trace}{tr}
\journal{XXX}
\begin{document}

\begin{frontmatter}
\title{Critical quasilinear equations on Riemannian manifolds\tnoteref{sw}}

\author[xtu]{Linlin Sun}
\ead{sunll@xtu.edu.cn}

\author[gzhu,amss,ucas]{Youde Wang\corref{wyd}}
\ead{wyd@math.ac.cn}

\address[xtu]{School of Mathematics and Computational Science, Xiangtan University, Xiangtan 411105, China}

\address[gzhu]{School of Mathematics and Information Sciences, Guangzhou University, Guangzhou 510006, China}
\address[amss]{Hua Loo-Keng Key Laboratory Mathematics, Institute of Mathematics, Academy of Mathematics and Systems Science, Chinese Academy of Sciences, Beijing 100190, China}
\address[ucas]{School of Mathematical Sciences, University of Chinese Academy of Sciences, Beijing 100049, China}

\cortext[wyd]{Corresponding author.}
\tnotetext[sw]{Y. Wang is supported by National Natural Science Foundation of China (Grant No. 12431003).}

\begin{abstract}
In this paper, we investigate critical quasilinear elliptic partial differential equations on a complete Riemannian manifold with nonnegative Ricci curvature. By exploiting a new and sharp nonlinear Kato inequality and establishing some Cheng-Yau type gradient estimates, we classify positive solutions to the critical $p$-Laplace equation and show rigidity concerning the ambient manifold. Our results extend and improve some previous conclusions in the literature. Similar results are obtained for solutions to the quasilinear Liouville equation involving the $n$-Laplace operator, where $n$ corresponds to the dimension of the ambient manifold.
\end{abstract}

\begin{keyword}
critical $p$-Laplace equation\sep Liouville equation \sep classification \sep rigidity\sep Ricci curvature

\MSC[2020] 35J92\sep 35B33\sep 58J05 \sep 53C21

\end{keyword}

\end{frontmatter}


\section{Introduction}
In the last half century, significant progress has been made in the study of the Lane-Emden equation given by
\begin{align}\label{eq:lane-emden}
\Delta u + u^\alpha=0, \quad\mbox{in}~~ M^n,
\end{align}
where $\left(M^n, g\right)$ is an $n$-dimensional complete Riemannian manifold and $\Delta$ is the Laplace-Beltrami operator with respect to the metric $g$ (see \cite{GidSpr81global, Ding-Ni85, CafGidSpr89asymptotic, WanWei23on} and references therein). In the case $-\infty<\alpha<\frac{n+2}{n-2}$, it has been shown that this equation does not admit any positive solution if the underlying manifold is a noncompact complete manifold with nonnegative Ricci curvature (see \cite{GidSpr81global, WanWei23on}). On the contrary, by using ``finite domain approximation" Ding and Ni in \cite{Ding-Ni85} proved the following:
\begin{quotation}
For any $c>0$, the equation $\Delta u + au^\alpha=0$ in $\mathbb{R}^n$, where $\alpha\geq\frac{n+2}{n-2}$ and $a$ is a positive constant, possesses a positive solution $v$ with $\norm{v}_{ L^\infty\left(\mathbb{R}^n\right)} = c$.
\end{quotation}
Ding \cite{Ding86infini} also proved that the Lane-Emden equation \eqref{eq:lane-emden} in $\mathbb{R}^n$ with $\alpha = \frac{n+2}{n-2}$ admits infinitely many sign-changing solutions.

On the other hand, one has also studied the following quasilinear Lane-Emden equation
\begin{align}\label{eq:quasi-lane-emden}
 \Delta_p u + u^\alpha=0, \quad\mbox{in}~~ M^n
\end{align}
where the $p$-Laplacian $\Delta_p$, acting on functions $u$, is defined as follows:
\begin{equation*}
\Delta_pu = \Div\left(\abs{\nabla u}^{p-2}\nabla u\right),
\end{equation*}
since the $p$-Laplace operator plays a crucial role in various fields including differential geometry, partial differential equations (PDEs), physics, and stochastic models. For instance, Serrin and Zou \cite[Corollary \Rmn{2}]{SerZou02cauchy} proved that the above classical quasilinear Lane-Emden equation \eqref{eq:quasi-lane-emden} admits no positive solution in $\mathbb{R}^n$ if
\begin{align*}
0<\alpha <\dfrac{(n+1)p-n}{n-p}\quad \text{and} \quad 1<p<n.
\end{align*}
Actually, the above Liouville theorem established in \cite{GidSpr81global, SerZou02cauchy} has been extended to the cases
\begin{align*}
-\infty <\alpha <\dfrac{(n+1)p-n}{n-p} \quad\text{and}\quad 1<p<n,
\end{align*}
for details we refer to \cite{HeWanWei24gradient}. Very recently, J. He and the authors of this paper \cite{HeSunWan24optimal} proved the optimal Liouville theorem on the quasilinear Lane-Emden equation \eqref{eq:quasi-lane-emden} on a noncompact complete Riemannian manifold with nonnegative Ricci curvature. More concretely, they proved that the quasilinear Lane-Emden equation \eqref{eq:quasi-lane-emden}, defined on a complete Riemannian manifold with nonnegative Ricci curvature, does not admit any positive solution if $\alpha\in \left(-\infty,\, p_S-1\right)$ where $p_S$ is the Sobolev critical exponent given by
$$p_S\equiv\dfrac{np}{(n-p)^+},$$
where $p_S = \frac{np}{n-p}$ for $1 < p < n$ is associated with the Sobolev embedding $W^{1,p}_0\left(\Omega\right) \subset L^{p_{S}}\left(\Omega\right)$ for any bounded domain $\Omega \subset \mathbb{R}^n$. Moreover, they also established a universal $\log$-gradient estimate for positive solutions to this equation on a complete manifold with Ricci curvature bounded from below.

It is important to note that for the nonlinear heat equation
$$ u_t -\Delta u = u^\alpha\quad \mbox{in}\,\,\mathbb{R}^n,\quad \alpha>1,$$
Quittner \cite{Quit21op} addressed the longstanding conjecture regarding the nonexistence of positive classical solutions within the subcritical range $\alpha(n-2)<n+2$.

On the contrary, if $\alpha = p_S-1$ in the above Lane-Emden equation \eqref{eq:quasi-lane-emden}, we obtain the critical $p$-Laplace equation in $\mathbb{R}^n$
$$\Delta_p u + u^{p_s-1}=0$$
which is related to the extremals in the Sobolev inequality. It is well-known that it does admit the following solution family:
\begin{align}\label{eq:solution-p}
u(x)=\left(a+b\abs{x-x_0}^{\frac{p}{p-1}}\right)^{-\frac{n-p}{p}},\quad\quad n\left(\frac{n-p}{p-1}\right)^{p-1}ab^{p-1}=1,
\end{align}
where $x_0\in\mathbb{R}^n$, $a>0$ and $b>0$. Moreover, One has also shown that the above solutions, which are usually called as \emph{Aubin-Talenti bubbles}, are the only $\mathcal{D}^{1,p}$-solutions to this equation (for $1< p <n$), where
$$\mathcal{D}^{1,p}\left(\mathbb{R}^n\right)\coloneqq\left\{v\in L^{p_S}\left(\mathbb{R}^n\right): \int_{\mathbb{R}^n}\abs{\nabla v}^p <+\infty\right\}.$$
The result can also be regarded as a Liouville theorem corresponding to the critical quasilinear equation. For further details we refer to \cite{Ding86infini, LiLi05yamabe, Sci16classification, Vet16priori} and references therein. The Liouville theorems for subcritical or critical semilinear elliptic equations in the Heisenberg group has also been considered, we refer the interested reader to \cite{MaOu23liouville, CatLiMonRon24on}.

The Liouville equation in $\mathbb{R}^2$ reads
$$\Delta u + e^{2u}=0$$
which is a close relative of the critical $p$-Laplace equation. A family of standard solutions to the Liouville equation in $\mathbb{R}^2$ are expressed as
\begin{align}\label{eq:liouville-2}
u(x) = -\ln\left(a + b \abs{x - x_0}^2\right)
\end{align}
where $x_0 \in \mathbb{R}^n$ and the parameters $a$ and $b$ are positive constants fulfilling the condition $4ab = 1$. Solutions $u$ to this equation in $\mathbb{R}^2$ with $e^{2u} \in L^1\left(\mathbb{R}^2\right)$ were classified by Chen and Li \cite{CheLi91classification}, who showed that $u$ must be of the above form \eqref{eq:liouville-2}. One also provided a complete classification with respect to asymptotic behavior, stability and intersections
properties of radial smooth solutions to the equation $-\Delta u = e^u$ on Riemannian model manifolds $\left(M^n, g\right)$ in dimension $n \geq 2$ (see \cite{BerFerGanRoy2023}).

\subsection{Motivations and geometric background}

In this paper we are focus on the \emph{critical $p$-Laplace equation} ($1<p<n$)
\begin{equation}\label{eq:critical-p}
-\Delta_pu = u^{p_S - 1}
\end{equation}
and the quasilinear \emph{Liouville equation}
\begin{equation}\label{eq:critical-n}
-\Delta_nu = e^{nu}
\end{equation}
in Euclidean space $\mathbb{R}^n$ or a complete Riemannian manifold $\left(M^n, g\right)$ with nonnegative Ricci curvature. Although the equation \eqref{eq:critical-p} in $\mathbb{R}^n$ has been extensively studied by numerous mathematicians, an interesting and challenging problem is remained, i.e.,
\begin{prob}\label{prob:1}
Whether or not any positive solution to \eqref{eq:critical-p} in $\mathbb{R}^n$ must be an \emph{Aubin-Talenti bubble}?
\end{prob}

Naturally, one want to extend some conclusions on the equations \eqref{eq:critical-p} in $\mathbb{R}^n$ to the case the domain $\left(M^n, g\right)$ is a complete Riemannian manifold, for instance, extending the above Liouville theorems for the critical Laplace equation ($p=2$) to the general Riemannian setting. However, the problem turns out to be a challenging and still open problem:
\begin{prob}\label{prob:2}
The problem is divided into three parts:
\begin{enumerate}[(A)]
\item
In fact, there is a \textbf{conjecture} on the following strong rigidity result: let $\left(M^n, g\right)$ be a complete noncompact Riemannian manifold with nonnegative Ricci curvature and let $u \in C^2\left(M^n\right)$ be a solution to
$$\Delta u + u^{\frac{n+2}{n-2}}=0, \quad u \geq 0,\,\, \mbox{in}\,\, M^n,$$
then either $u \equiv 0$ in $M^n$ or $\left(M^n, g\right)$ is isometric to $\mathbb{R}^n$ with the Euclidean metric and $u$ is an \emph{Aubin-Talenti bubble} (see \cite{CatMonRonWan24on} posted on arXiv).

\item
Naturally, one wonder whether or not the following strong rigidity result for critical $p$-Laplace ($1<p<n$) equation holds: let $\left(M^n, g\right)$ be a complete noncompact Riemannian manifold with nonnegative Ricci curvature and let $u$ be a nonnegative and weak solution to the critical $p$-Laplace equation \eqref{eq:critical-p}, then either $u \equiv 0$ in $M^n$ or $\left(M^n, g\right)$ is isometric to $\mathbb{R}^n$ with the Euclidean metric and $u$ is an \emph{Aubin-Talenti bubble}.

\item A weak version of the above (B), whether or not such manifold admits any Sobolev minimizer of the best Sobolev constant denoted by
$\mathcal{S}_p\left(M^n\right)$ for each $1<p<n$? In this context, the best Sobolev constant, denoted as $\mathcal{S}_p\left(M^n\right)$, is expressed by the following equation:
\begin{align*}
 \mathcal{S}_p\left(M^n\right) = \sup_{0\neq u\in C^{\infty}_0\left(M^n\right)} \frac{\norm{u}_{L^{p_S}\left(M^n\right)}}{\norm{\nabla u}_{L^p\left(M^n\right)}},
\end{align*}
while a Sobolev minimizer $u$ for this best Sobolev constant $\mathcal{S}_p\left(M^n\right)$ is characterized by $u\in \mathcal{D}^{1,p}(M)$ and satisfies
$$\norm{u}_{L^{p_S}\left(M^n\right)} = \mathcal{S}_p\left(M^n\right)\norm{\nabla u}_{L^p\left(M^n\right)}.$$
\end{enumerate}
\end{prob}

For the equation \eqref{eq:critical-n} we may ask the following problem, i.e.,
\begin{prob}\label{prob:3}
Let $\left(M^n, g\right)$ be a complete noncompact Riemannian manifold and with nonnegative Ricci curvature and let $u$ be a weak solution to the Liouville equation \eqref{eq:critical-n}.
What conditions on $u$ can enforce that $M^n$ is isometric to the Euclidean space $\mathbb{R}^n$ and the solution $u$ is given by
\begin{align*}
u(x) = -\ln\left(a + b \abs{x - x_0}^{\frac{n}{n-1}}\right)\quad\mbox{with}\quad \frac{n^n}{(n-1)^{n-1}}ab^{n-1} = 1,
\end{align*}
where $x_0\in\mathbb{R}^n$ and the parameters $a$ and $b$ are positive constants?
\end{prob}

The goal of the present paper is to explore the classification of solutions to critical quasilinear elliptic equations on a complete Riemannian manifold with nonnegative Ricci curvature, and hence to answer partially the above problems. Furthermore, we will examine the influence of these solutions on the rigidity properties of the domain manifold. It is well known that such issues are crucial in many applications such as a priori estimates, blow-up analysis, and asymptotic analysis.
\medskip

The equations \eqref{eq:critical-p} and \eqref{eq:critical-n} are closely related to conformal geometry. On the one hand, the well-known Yamabe equation (cf. \cite{LeePar87yamabe, Wei19on}), a fundamental problem in conformal geometry, seeks to determine the existence of a conformal metric with constant scalar curvature on any closed Riemannian manifold. Specifically, let $\left(M^n, g\right)$ be a closed Riemannian manifold of dimension $n$. For $n \geq 3$, the question arises: can we find a conformal metric $\tilde{g} = u^{\frac{4}{n-2}} g$ such that the scalar curvature of $\tilde{g}$ remains constant? Consequently, the Yamabe problem is equivalent to solving the following partial differential equation:
\begin{align*}
-\Delta_g u + \dfrac{n-2}{4(n-1)} R_g u = \lambda u^{\frac{n+2}{n-2}},
\end{align*}
where $R_g$ denotes the scalar curvature associated with the initial metric $g$ and $\lambda$ represents a constant. On the other hand, the Nirenberg problem asks to identify functions $K$ on the two-sphere $\mathbb{S}^2$ for which there exists a metric $\tilde{g}$ on $\mathbb{S}^2$ conformal to the standard metric $g$ such that $K$ is the Gaussian curvature of $\tilde{g}$. The question becomes: can we find a conformal metric $\tilde{g} = e^{2u} g$ such that $K$ is the Gaussian curvature of $\tilde{g}$? That is, this problem reduces to studying the following nonlinear equation:
\begin{align*}
-\Delta_{\mathbb{S}^2} u + 1 = K e^{2u}.
\end{align*}
Besides, the equation \eqref{eq:critical-n} appears in the blow-up analysis on Moser-Trudinger inequality (see \cite{Li05Moser}) and other geometric problems (see \cite{ChengNi91, Gel63, MaQing21har, WeiXu99Clas, Xu05Uni}).

\vspace{2ex}

Now, let us recall some fundamental facts and results which are related to the equation \eqref{eq:critical-p}. Firstly we recall the variational nature of the critical $p$-Laplace equation: the action functional associated to \eqref{eq:critical-p} is given by
$$ E_M(u)\coloneqq \dfrac{1}{p}\int_M \abs{\nabla u}^p - \dfrac{1}{p_S}\int_M \abs{u}^{p_S},$$
indeed it is well-known that the Euler-Lagrange equation associated to this action functional is
\begin{align*}
-\Delta_pu=\abs{u}^{p_s-2}u.
\end{align*}
We know that the critical Sobolev embedding $W^{1,p}\left(\mathbb{R}^n\right) \subset L^{p_S}\left(\mathbb{R}^n\right)$ asserts the existence of a positive constant $\mathcal{S}_p\left(\mathbb{R}^n\right)$ such that for every function $v \in C_0^\infty\left(\mathbb{R}^n\right)$,
\begin{equation}\label{eq:sobolev}
\norm{v}_{L^{p_S}\left(\mathbb{R}^n\right)} \leq \mathcal{S}_p\left(\mathbb{R}^n\right)\norm{
\nabla v}_{L^p\left(\mathbb{R}^n\right)}.
\end{equation}
It is easy to see that the extremal functions of the above Sobolev inequality satisfy \eqref{eq:critical-p} if they exist.

The identification of the best constant $\mathcal{S}_p\left(\mathbb{R}^n\right)$ for $1 < p < n$ dates back to Aubin \cite{Aub76problems} when $p=2$ and Talenti \cite{Tal76best} for general $p$. To obtain the best constant, Aubin and Talenti employed the P\'olya-Szeg\"o inequality by utilizing Schwarz symmetrization and the sharp isoperimetric inequality in $\mathbb{R}^n$, thus reducing a multidimensional problem to a more tractable one-dimensional case. The best constant $\mathcal{S}_p\left(\mathbb{R}^n\right)$ in this inequality \eqref{eq:sobolev} when $p \in (1,\, n)$ is explicitly known, and the extremal functions are just given by the \emph{Aubin-Talenti bubbles}.

In the limit case when $p = 1$, we have
\begin{equation*}
\lim_{p \to 1} \mathcal{S}_p\left(\mathbb{R}^n\right) = n^{-1} \omega_n^{-1/n},
\end{equation*}
where $\omega_n$ is the volume of the unit ball in $\mathbb{R}^n$, while \eqref{eq:sobolev} reduces to the sharp $L^1$-Sobolev inequality, which is equivalent to the sharp isoperimetric inequality in $\mathbb{R}^n$. A genuinely new proof for the sharp Sobolev inequality \eqref{eq:sobolev} has been obtained by Cordero-Erausquin, Nazaret, and Villani \cite{CorNazVil04mass} based on mass transportation theory and the Brenier map \cite{Bre91polar}.

Significant efforts have recently been devoted to the study of optimal Sobolev inequalities on Riemannian manifolds, as surveyed in \cite{DruHeb02AB} and its references. Recently, the sharp Sobolev inequality on complete noncompact Riemannian manifolds with nonnegative Ricci curvature was obtained by Brendle \cite{Bre23sobolev} using the method of the Alexandrov-Bakelman-Pucci. This work addresses an open question posed by Cordero-Erausquin, Nazaret, and Villani \cite{CorNazVil04mass} concerning noncompact, complete Riemannian manifolds with nonnegative Ricci curvature. Balogh and Krist\'aly \cite{Bal-Kris} gave an alternative proof of the rigidity result of Brendle and shew that the Sobolev constant is sharp. Concretely, the main result is stated as follows:

\begin{quotation}
Let $\left(M^n, g\right)$ be a noncompact, complete $n$-dimensional Riemannian manifold with nonnegative Ricci curvature and Euclidean volume growth, i.e., $0 < \mathrm{AVR}_g \leq 1$. If $p \in [1, n)$, then for all $v \in C_0^\infty\left(M^n\right)$, it holds that
\begin{align*}
\norm{v}_{L^{p_{S}}\left(M^n\right)} \leq \mathcal{S}_p\left(\mathbb{R}^n\right)\mathrm{AVR}_g^{-1/n}\norm{\nabla v}_{L^p\left(M^n\right)}.
\end{align*}
Furthermore, the constant $\mathcal{S}_p\left(\mathbb{R}^n\right) \mathrm{AVR}_g^{-1/n}$ is shown to be sharp.
\end{quotation}
Here, $\mathrm{AVR}_g$ denotes the asymptotic volume ratio of $\left(M^n, g\right)$, defined as
\begin{align*}
\mathrm{AVR}_g = \lim_{R \to \infty} \dfrac{\mathrm{Vol}(B_R(o))}{\omega_n R^n},
\end{align*}
which is independent of the base point $o$ due to the Bishop-Gromov volume comparison theorem when the Ricci curvature is nonnegative, the constant $\omega_n = \pi^{n/2}/\Gamma(1+ n/2)$ plays a normalization role and it is the volume of the Euclidean unit ball in $\mathbb{R}^n$.

Notably, Krist\'aly \cite{Kri24sharp} has also provided a symmetrization-free, fully $L^2$-optimal transport (OT)-based proof for the sharp $L^p$-Sobolev inequality for $1\leq p< n$ in the case of $n$-dimensional Riemannian manifolds with nonnegative Ricci curvature.
\vspace{2ex}

For \autoref{prob:1} associated with the critical $p$-Laplace equation \eqref{eq:critical-p}, i.e., ``whether any positive solution to \eqref{eq:critical-p} in $\mathbb{R}^n$ must be of the form \eqref{eq:solution-p} or not", we recall some previous results in the literature.

In case of $p=2$, this problem was solved by Caffarelli, Gidas and Spruck \cite{CafGidSpr89asymptotic}. They proved that any positive solution to \eqref{eq:critical-p} in $\mathbb{R}^n$ is radial and hence given by \eqref{eq:solution-p}. Essential tools in their proof are the moving planes method and the Kelvin transform, which allow reducing the problem to the study of solutions with nice decaying properties at infinity. For the quasilinear case, i.e., $p \neq 2$, the Kelvin transform is not available, making it more complicated than the semilinear case. Nevertheless, the method of moving planes has been exploited by Damascelli, Merchant, Montoro and Sciunzi \cite{DamMerMonSci14radial} for $\frac{2n}{n+2} \leq p < 2$, Sciunzi \cite{Sci16classification} for $2 < p < n$, and V\'etois \cite{Vet16priori} for $1 < p \leq 2$, under the additional assumption of finite energy. That is, they proved that
\begin{quote}
Each positive solution $u$ to \eqref{eq:critical-p} in $\mathbb{R}^n$ with $u \in L^{p_{S}}\left(\mathbb{R}^n\right)$ and $
\nabla u \in L^p\left(\mathbb{R}^n\right)$ must be of the form \eqref{eq:solution-p}.
\end{quote}
Such classification results were extended by Ciraolo, Figalli and Roncoroni \cite{CirFigRon20symmetry} to the anisotropic setting and in convex cones of $\mathbb{R}^n$ using the method of integration by parts, which is related to Gidas and Spruck \cite{GidSpr81global} and Serrin and Zou \cite{SerZou02cauchy}. The same method has also been successfully used in analogous problems, see \cite{HeSunWan24optimal, LinMa24liouville} for example.
\medskip

But for the case $p\neq 2$ and without any further assumptions, the problem is still completely open, until recently Catino, Monticelli and Roncoroni \cite{CatMonRon23on} gave a result in dimensions $n = 2,\, 3$ for $\frac{n}{2} < p < 2$. This result was extended by Ou \cite{Ou22on} for a large range $p \in \left[\frac{n+1}{3},\, n\right)$. More recently, after a refined analysis, V\'etois \cite{Vet24note} improved upon a similar result obtained by Ou to the range $p \in (\tilde{p}_n,\, n)$ where
$$\tilde{p}_n \in \left(\dfrac{n}{3},\, \dfrac{n+1}{3}\right)\quad \mbox{and}\quad \tilde{p}_n \sim \dfrac{n}{3} + \dfrac{1}{n}\quad \mbox{as}\,\, n \to \infty.$$

On the other hand, the critical $p$-Laplace equation \eqref{eq:critical-p} in complete, connected and noncompact Riemannian manifolds $M^n$ of dimension $n$ with nonnegative Ricci curvature has recently been studied by some mathematicians. For instance, Fogagnolo, Malchiodi and Mazzieri \cite{FogMalMaz23note, FogMalMaz24correction}, Catino and Monticelli \cite{CatMon24semilinear}, and Ciraolo, Farina and Polvara \cite{CirFarPol24classification} explored the semilinear case. Later, Catino, Monticelli and Roncoroni \cite{CatMonRon23on} studied the quasilinear case and obtained several classification results under additional geometric and analytic assumptions.

In particular, in \cite{CatMon24semilinear}, the authors proved that the only positive classical solutions to \eqref{eq:critical-p} are given by the Aubin-Talenti bubbles \eqref{eq:solution-p} with $p=2$, provided that the Riemannian manifold is isometric to the Euclidean space, $n = 3$, or $u$ has finite energy or satisfies suitable conditions at infinity. These results were extended by Ciraolo, Farina and Polvara \cite{CirFarPol24classification}. Actually, the conjecture given in \autoref{prob:2} associated with \eqref{eq:critical-p} is known to hold when $p=2$ and the dimension of the Riemannian manifold is $n =3,\,4,\,5$ (see \cite{CatMon24semilinear, CirFarPol24classification}), otherwise, up to now it can not be proved except the solutions fulfill some additional assumptions.
\medskip

Moreover, as an immediate consequence of the conclusion (Theorem 1.2 in \cite{CatMon24semilinear}) Catino and Monticelli proved:
\begin{quote}
A complete noncompact nonflat $n$-dimensional ($n\geq 3$) Riemannian manifold with nonnegative Ricci curvature does not admit any Sobolev minimizer of $\mathcal{S}_2\left(M^n\right)$.
\end{quote}
An alternative proof of the above result can be recovered using a recent result by Brendle \cite{Bre23sobolev} and the results by the second named author of this paper \cite{Wan94on}, Ledoux \cite{Led99} and Xia \cite{Xia}, for further details we refer to \cite{CatMon24semilinear}.

It should be mentioned that Muratori and Soave \cite{MuSo23cartan} studied recently some PDEs on Cartan-Hadamard manifolds, which are related to Sobolev inequalities and critical $p$-Laplace equations on Cartan-Hadamard manifolds, and established some rigidity results for these related PDEs (also see \cite{BerFerGri2014, BonGaGri2013}).
\medskip

As regards the Liouville equation \eqref{eq:critical-n}, a standard solution to the Liouville equation \eqref{eq:critical-n} in $\mathbb{R}^n$ is expressed as
\begin{align}
\label{eq:solution-n}
u(x) = -\ln\left(a + b \abs{x - x_0}^{\frac{n}{n-1}}\right)
\end{align}
where $x_0 \in \mathbb{R}^n$ and the parameters $a$ and $b$ are positive, fulfilling the condition
$$\dfrac{n^n}{(n-1)^{n-1}}ab^{n-1} = 1.$$
As mentioned above, in the case $n = 2$, solutions $u$ to \eqref{eq:critical-n} in $\mathbb{R}^2$ with $e^{2u} \in L^1\left(\mathbb{R}^2\right)$ were classified by Chen and Li \cite{CheLi91classification}, who showed that $u$ must be of the form \eqref{eq:solution-n}. Their method relies on Ding's Lemma, the moving plane method, and a previous result by Brezis and Merle \cite{BreMer91uniform} on the upper boundedness of solutions. Moreover, in \cite{Espo18non} Esposito classified completely entire solutions of the general case in $\mathbb{R}^n$ with finite mass, i.e., solutions $u$ to \eqref{eq:critical-n} in $\mathbb{R}^n$ with $e^{nu}\in L^1\left(\mathbb{R}^n\right)$ must be of the above form \eqref{eq:solution-n}.

\medskip

Meanwhile, the Liouville equation on complete, connected, and noncompact Riemannian surfaces with nonnegative Gaussian curvature has also been studied by Catino and Monticelli \cite{CatMon24semilinear}, Cai and Lai \cite{CaiLai24liouville}, and Ciraolo, Farina, and Polvara \cite{CirFarPol24classification}. Notably, Cai and Lai \cite{CaiLai24liouville} provided the following classification result.
\begin{quotation}
Let $u$ be a solution to \eqref{eq:critical-n} on a complete, connected and noncompacted Riemannian surface $\Sigma$ with nonnegative Gaussian curvature. Let $r(x)$ be the distance function on $\Sigma$ with respect to a fixed point. Assume $e^{2u}\in L^1\left(\Sigma\right)$ and $u(x)\geq-2\ln r(x)-o\left(\ln r(x)\right)$ for $r(x)$ large, then $\Sigma$ must be isometric to the Euclidean plane $\mathbb{R}^2$. Moreover, $-2$ is optimal in the sense that there exists a nonflat $\Sigma$ which admits solutions satisfying $u(x)\sim \gamma\ln r(x)$ for any $\gamma<-2$.
\end{quotation}
It is noteworthy that Catino and Monticelli \cite{CatMon24semilinear} achieved a comparable result by imposing a more stringent condition on $u$. Similarly, Ciraolo, Farina, and Polvara \cite{CirFarPol24classification} reached a parallel conclusion, forgoing the requirement of finite total Gaussian curvature but instead enforcing more rigorous conditions on the asymptotic properties of the solution.
\medskip

It is natural to expect stronger classification results for the critical $p$-Laplace equation \eqref{eq:critical-p} and the Liouville equation \eqref{eq:critical-n} on Riemannian manifolds with nonnegative Ricci curvature. No doubt, extending such classification results for the critical equations to more general Riemannian manifolds requires the introduction of different techniques than those used in the Euclidean setting, which strongly rely on the conformal invariance of the problem and the rich structure of the conformal group of the ambient space.

\subsection{Main results}

Now, we are in the position to state our main results on the critical $p$-Laplace equation \eqref{eq:critical-p} and the Liouville equation \eqref{eq:critical-n}.

Firstly, we investigate the critical $p$-Laplace equation \eqref{eq:critical-p} within the framework of an $n$-dimensional Riemannian manifold $M^n$.

\begin{defn}A function $u\in W^{1,p}_{loc}\left(M^n\right)\cap L^{\infty}_{loc}\left(M^n\right)$ is said to be a weak solution to \eqref{eq:critical-p} if
\begin{align*}
\int \abs{\nabla u}^{p-2}\hin{\nabla u}{\nabla\phi}=\int u^{p_S-1}\phi,\quad\forall \phi\in C_0^{\infty}\left(M^n\right).
\end{align*}
\end{defn}
The first main result is the following
\begin{main}\label{thm:main1}
Let $M^n$ be a complete, connected and noncompact Riemannian manifold of dimension $n$ and with nonnegative Ricci curvature. Let $u$ be a positive and weak solution to the critical $p$-Laplace equation \eqref{eq:critical-p} in $M^n$, i.e.,
\begin{align*}
-\Delta_pu=u^{\frac{np}{n-p}-1}.
\end{align*}
Assume one of the following conditions holds
\begin{enumerate}[(A)]
\item $p_n<p<n$ where
\begin{align*}
p_n=\begin{cases}
\frac{n^2}{3n-2}, &n\in\set{2,3,4},\\
\frac{n^2+2}{3n}, &n\geq5.
\end{cases}
\end{align*}
\item $1<p<n$ and
\begin{align*}
\limsup_{R\to\infty} R^{q-n}\int_{B_{R}(o)}u^{\frac{pq}{n-p}}<\infty
\end{align*}
for some fixed point $o\in M^n$ and some constant
$$q>\dfrac{(n^2-3n+1)p+n}{(n-1)p}.$$

\item $1<p<n$ and \begin{align*}
u(x)\leq Cr(x)^{-d},\quad\forall r(x)\geq C^{-1}
\end{align*}
for some positive numbers $C$ and $d$ with
\begin{align*}
d>\dfrac{(n-3p)(p-1)(n-p)}{p^2\left(n+2-3p\right)},
\end{align*}
where $r(x)=\mathrm{dist}(x,o)$ is the distance function from $x$ to some fixed point $o\in M^n$.
\end{enumerate}
Then $M^n$ is isometric to the Euclidean space $\mathbb{R}^n$ and the solution is an Aubin-Talenti bubble, i.e., $u$ is given by
\begin{align*}
u(x)=\left(a+b\abs{x-x_0}^{\frac{p}{p-1}}\right)^{-\frac{n-p}{p}},
\end{align*}
where $x_0\in\mathbb{R}^n$ is a fixed point, $a$ and $b$ are positive constants satisfying
$$ n\left(\dfrac{n-p}{p-1}\right)^{p-1}ab^{p-1}=1.$$
\end{main}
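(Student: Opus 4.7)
The plan is to derive a sharp Cheng--Yau type pointwise gradient estimate for positive solutions of \eqref{eq:critical-p}, which, once saturated, forces simultaneously both $u$ to be an Aubin--Talenti bubble and $(M^{n},g)$ to be isometric to $\mathbb{R}^{n}$. The two analytical pillars are a Bochner--Weitzenb\"ock computation for $|\nabla u|^{p}$ carried out with respect to the linearized $p$-Laplace operator, and the new sharp nonlinear Kato inequality advertised in the abstract, whose equality case encodes the conformal geometry of the bubble profile.

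First, standard regularity theory for weak solutions (Tolksdorf, DiBenedetto) promotes $u$ to $C^{1,\alpha}_{\mathrm{loc}}$ globally, and to $C^{2,\alpha}$ on the regular set $\{|\nabla u|\neq 0\}$. On this set I would apply the Bochner formula to $|\nabla u|^{2}$, substitute $-\Delta_{p}u = u^{p_{S}-1}$, and invoke $\mathrm{Ric}\ge 0$ to produce a differential inequality of the form
$$L_{u}\!\left(|\nabla u|^{p}\right)\;\ge\;\mathcal{Q}\!\left(\nabla^{2}u,\nabla u\right),$$
where $L_{u}$ denotes the linearized $p$-Laplacian with weight $|\nabla u|^{p-2}$ and $\mathcal{Q}$ is a nonnegative quadratic form in the Hessian. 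The sharp nonlinear Kato inequality then estimates $\mathcal{Q}$ from below by a pure gradient expression with a constant matching \emph{exactly} the profile \eqref{eq:solution-p}; any non-sharp version would weaken the conclusion and shrink the admissible range of $p$.

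Next, I would multiply this inequality by test functions of the form $u^{\alpha}|\nabla u|^{\beta}\eta^{q}$, with $\eta$ a cutoff supported on an annulus $B_{2R}(o)\setminus B_{R}(o)$, and integrate by parts. After absorbing cross terms by Young's inequality, this yields an integral estimate relating weighted energies of $u$ on $B_{R}$ to boundary contributions on the annulus. Each of the three hypotheses is then designed to kill the boundary contributions as $R\to\infty$: condition (B) directly bounds the boundary integral through the growth rate of $\int u^{pq/(n-p)}$; condition (C) converts pointwise decay of $u$ into gradient decay via the interior $C^{1,\alpha}$ estimates mentioned above; condition (A) corresponds to the exponent range in which the sharp Kato inequality, fed into the Bochner identity, produces a strictly positive leading coefficient, so that a Moser-type iteration in the spirit of Serrin--Zou and of \cite{HeSunWan24optimal} closes without any a priori decay of $u$. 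The threshold $p_{n}$ is precisely where this coefficient degenerates to zero.

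With the Cheng--Yau estimate in hand, an equality-case analysis completes the proof. Equality forces the trace-free part of the weighted Hessian to vanish and $\mathrm{Ric}(\nabla u,\nabla u)\equiv 0$; an Obata-type ODE argument along the gradient flow of $u$ then identifies the geometry of level sets, and combined with the equality case of Bishop--Gromov (conveniently invoked through the sharp Sobolev inequality of Balogh--Krist\'aly \cite{Kri24sharp} and Brendle \cite{Bre23sobolev}) this forces $(M^{n},g)$ to be isometric to $\mathbb{R}^{n}$ with $u$ radial about some $x_{0}$; the normalized ODE then yields the Aubin--Talenti form. The main obstacle I anticipate lies in case (A): the computations must be calibrated so that the sharp constant in the nonlinear Kato inequality and the coefficient produced by Bochner combine to give a strictly positive integral under only the bare assumption $p>p_{n}$. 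This is where the sharpness of the Kato inequality is indispensable and from where the explicit piecewise value of $p_{n}$ arises.
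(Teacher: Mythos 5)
Your proposal is in the right landscape (Bochner identity for the $p$-Laplacian, sharp nonlinear Kato inequality, integration by parts with annular cutoffs) but the central mechanism you propose is not the one that closes the argument, and, as sketched, it has a real gap.

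The paper does not try to saturate a Cheng--Yau type gradient estimate. Instead it changes variables to $w=\bigl(\tfrac{n-p}{p}\bigr)^{(p-1)/p}u^{-p/(n-p)}$ and introduces the $P$-function $f=\tfrac{n(p-1)}{p}w^{-1}|\nabla w|^{p}+w^{-1}$, which satisfies $\Delta_{p}w=f$; the Aubin--Talenti bubbles are exactly the solutions with $f$ constant. The Bochner identity is written for $\mathbf W=|\nabla w|^{p-2}\nabla w$ and the trace-free part $\mathbf E$ of $\nabla\mathbf W$, and the key observation is that $\mathbf E(\mathbf W)$ is proportional to $w|\nabla w|^{p-2}\mathbf A(\nabla f)$. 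Feeding the sharp Kato inequality into the resulting divergence identity and running a Karp-type iteration (the technical core, \autoref{thm:basic-p} / \autoref{thm:basic-n}) shows: if $f$ is not constant, then $R^{-2}\int_{B_R\setminus B_{R/2}}f^{-\alpha}w^{1-n}|\nabla w|^{2p-2}\to\infty$ for every $\alpha<\tfrac{n(p-1)}{(n-1)p}$. Each of the hypotheses (A), (B), (C) is then shown to give an upper bound of order $R^{2-\varepsilon}$ (or $R^{2}F(R)$) for the same integral, yielding a contradiction, so $f$ is constant. From $f$ constant one gets $\mathbf E(\mathbf W)\equiv 0$, hence $\mathbf E\equiv 0$ and $\operatorname{Ric}(\mathbf W,\mathbf W)\equiv 0$; $\mathbf W$ becomes a non-trivial homothetic vector field, and Tashiro's theorem --- not an equality case of a Sobolev/Bishop--Gromov inequality --- gives the isometry with $\mathbb R^{n}$; the Euclidean classification then fixes $u$. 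The Cheng--Yau gradient estimate (Section 4, proved by Moser iteration) is used only as a tool in case (C), to convert pointwise decay of $u$ into decay of $|\nabla\ln u|$.

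Two concrete problems with your plan as written. First, the proposed rigidity via equality in the Balogh--Krist\'aly/Brendle sharp Sobolev inequality requires the solution to be a Sobolev minimizer, i.e. to have finite energy; it would at best handle case (B) and cannot cover cases (A) and (C), which explicitly allow infinite-energy solutions. Second, "saturating" a Cheng--Yau estimate is not a closable step: the estimate (\autoref{thm:Cheng-Yau}) is a one-sided local bound and there is no mechanism in the hypotheses forcing equality in it; the actual classification comes from showing the lower-order quantity $f$ is \emph{constant}, which is a much weaker and more accessible rigidity. Related to this, your description doesn't account for the piecewise form of $p_{n}$: the paper establishes case (A) by two separate integral estimates (\autoref{lem:crucial-p} directly for $p>\tfrac{n^{2}}{3n-2}$, and a more elaborate inequality \autoref{thm:basic-p2} for $n\ge 5$ and $\tfrac{n^{2}+2}{3n}<p\le\tfrac{n^{2}}{3n-2}$), rather than by a single coefficient degenerating at $p=p_{n}$ in a Bochner formula.
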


The study by Ou \cite{Ou22on} focuses on categorizing solutions to the critical $p$-Laplace equation in Euclidean space without any initial assumptions, thereby extending the results of Catino, Monticelli, and Roncoroni \cite{CatMonRon23on} to a wider range of $p$. Our research not only extends the applicability for various values of $p$ but also explores the rigidity of manifolds with nonnegative Ricci curvature that admit positive solutions to the critical $p$-Laplace equation.

In particular, we obtain the following corollary which was obtained by Catino, Monticelli and Roncoroni \cite[Theorem 1.2 (i) and Theorem 1.8]{CatMonRon23on}.

\begin{cor}
A complete, connected, noncompact and nonflat Riemannian surface with nonnegative Gaussian curvature does not admit any positive and weak solution to the critical $p$-Laplace equation \eqref{eq:critical-p} for each $1<p<2$.
\end{cor}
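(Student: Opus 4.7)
The corollary is essentially a direct specialization of \autoref{thm:main1} to dimension $n=2$, so the proof plan is short. The strategy is to argue by contrapositive: assume a positive weak solution $u$ to \eqref{eq:critical-p} exists on the surface $(M^2,g)$, then deduce that $M^2$ must be flat.

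First, I would recall that on a surface the Ricci tensor satisfies $\operatorname{Ric} = K g$, where $K$ is the Gaussian curvature. Hence the hypothesis of nonnegative Gaussian curvature is equivalent to $\operatorname{Ric} \geq 0$, so the ambient curvature hypothesis of \autoref{thm:main1} is satisfied.

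Next, I would verify that condition (A) of \autoref{thm:main1} applies throughout the whole range $1<p<2$. For $n=2$, since $n \in \{2,3,4\}$, we have
\begin{equation*}
p_n = \dfrac{n^2}{3n-2}\Big|_{n=2} = \dfrac{4}{4} = 1,
\end{equation*}
so the assumption $p_n < p < n$ becomes exactly $1 < p < 2$, matching the hypothesis of the corollary. Therefore \autoref{thm:main1}(A) is applicable under the conditions of the corollary.

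Finally, assuming for contradiction that a positive weak solution exists, \autoref{thm:main1} forces $(M^2,g)$ to be isometric to $(\mathbb{R}^2,g_{\mathrm{Euc}})$, which is flat. This contradicts the nonflatness assumption, and the conclusion follows. There is no real obstacle here beyond checking these two observations (Ricci$=K g$ in dimension two, and $p_2=1$); the substance of the statement is entirely carried by \autoref{thm:main1}.
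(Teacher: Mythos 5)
Your proof is correct and matches the paper's intent: the corollary is stated as an immediate consequence of \autoref{thm:main1}, and your two observations — that $\operatorname{Ric}=Kg$ on a surface so nonnegative Gaussian curvature gives nonnegative Ricci curvature, and that $p_2=\frac{4}{4}=1$ so condition~(A) of \autoref{thm:main1} covers exactly $1<p<2$ — are precisely what is needed, after which the nonflatness hypothesis rules out the rigidity conclusion.
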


If $p=2$, we obtain the following corollary which was obtained by by Ciraolo, Farina and Polvara \cite[Theorem 1.1 (i)]{CirFarPol24classification} which extended a result of Catino, Monticelli and Roncoroni \cite[Theorem 1.2]{CatMonRon23on} and a result of Catino and Monticelli \cite[Corollary 1.5]{CatMon24semilinear}.
\begin{cor}
Let $M^n$ be a complete, connected and noncompact Riemannian manifold of dimension $n$ and with nonnegative Ricci curvature. If $n\in\set{3,4,5}$ and there exists a positive and weak solution to
\begin{align*}
-\Delta u=u^{\frac{n+2}{n-2}}
\end{align*}
in $M^n$, then $M^n$ is isometric to the Euclidean plane $\mathbb{R}^n$ and the solutions are given by the Aubin-Talenti bubbles.
\end{cor}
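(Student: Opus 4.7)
The plan is to derive this corollary as an immediate application of \autoref{thm:main1} with the specific choice $p=2$. Since no asymptotic decay or integrability assumption on $u$ is being imposed, I will aim to verify that condition (A) of \autoref{thm:main1} holds, namely $p_n < p < n$, for every $n \in \{3,4,5\}$ when $p = 2$.

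First I would read off the relevant threshold $p_n$ from the formula in \autoref{thm:main1}. For $n \in \{3,4\}$ the threshold is $p_n = \frac{n^2}{3n-2}$, so $p_3 = 9/7$ and $p_4 = 16/10 = 8/5$. For $n=5$ the threshold is $p_n = \frac{n^2+2}{3n}$, so $p_5 = 27/15 = 9/5$. In each of the three cases one immediately checks
\[
p_3 = \tfrac{9}{7} < 2, \qquad p_4 = \tfrac{8}{5} < 2, \qquad p_5 = \tfrac{9}{5} < 2,
\]
and of course $2 < n$ for $n \in \{3,4,5\}$. Hence condition (A) of \autoref{thm:main1} is fulfilled in each dimension.

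Having verified the hypothesis, I would then apply \autoref{thm:main1} directly: under the assumption that $(M^n,g)$ is complete, connected and noncompact with nonnegative Ricci curvature and that there exists a positive weak solution $u$ to $-\Delta u = u^{(n+2)/(n-2)}$ (which is exactly \eqref{eq:critical-p} for $p=2$), the theorem asserts that $M^n$ is isometric to $\mathbb{R}^n$ and that $u$ is an Aubin-Talenti bubble, which for $p=2$ takes the explicit form
\[
u(x) = \bigl(a + b\,\abs{x-x_0}^{2}\bigr)^{-\frac{n-2}{2}},
\]
with $n(n-2)\,a\,b = 1$ after specializing the general normalization in \autoref{thm:main1}.

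There is essentially no analytic obstacle here; the proof is purely a matter of checking the numerical threshold. The only possible pitfall is a boundary-case comparison for $n=4$ (since $p_4 = 8/5$ is the largest ratio of $p_n/2$ among the three cases), but even there the inequality $8/5 < 2$ is strict, so all three dimensions are covered without issue.
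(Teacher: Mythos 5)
Your proposal is correct and matches the paper's own treatment, which presents this corollary as an immediate consequence of the case $p=2$ in \autoref{thm:main1} (condition (A)); the threshold checks $p_3=9/7$, $p_4=8/5$, $p_5=9/5$, all strictly less than $2$, and the specialization of the normalization to $n(n-2)ab=1$ are exactly as one obtains by substituting $p=2$ into the statement.
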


Observe that
$$n>\dfrac{(n^2-3n+1)p+n}{(n-1)p}.$$
It is established that any positive solution $u$ to the critical $p$-Laplace equation \eqref{eq:critical-p} possesses finite potential energy, i.e., $u\in L^{p_S}\left(M^n\right)$, which is equivalent to having finite kinetic energy, i.e., $\nabla u\in L^{p}\left(M^n\right)$, and further equivalent to possessing finite (total) energy, i.e., $u\in L^{p_S}\left(M^n\right)$ and $\nabla u\in L^{p}\left(M^n\right)$. An immediate consequence of the second part of \autoref{thm:main1}, which extend a result due to Catino and Monticelli \cite[Corollary 1.3]{CatMon24semilinear} for $p=2$, is the following which answers (C) of \autoref{prob:2}.

\begin{cor}
A complete, connected, noncompacted and nonflat $n$-dimensional Riemannian manifold $M^n$ with nonnegative Ricci curvature does not admit any Sobolev minimizer of $\mathcal{S}_p\left(M^n\right)$ for each $1<p<n$.
\end{cor}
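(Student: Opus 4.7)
The plan is to derive this corollary directly from part (B) of \autoref{thm:main1} by showing that a hypothetical Sobolev minimizer automatically satisfies the integral growth hypothesis of that part. I argue by contradiction: assume that $M^n$ is nonflat and nevertheless admits a Sobolev minimizer of $\mathcal{S}_p\left(M^n\right)$. By definition of ``Sobolev minimizer'' as given in the excerpt, there then exists a positive weak solution $u$ to \eqref{eq:critical-p} with finite energy, so in particular $u\in L^{p_S}\left(M^n\right)$.

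The key observation is that the choice $q=n$ trivialises the integrability hypothesis of \autoref{thm:main1} (B). Indeed
\begin{align*}
\dfrac{pq}{n-p}=\dfrac{np}{n-p}=p_S\quad\text{and}\quad R^{q-n}=1,
\end{align*}
so for any fixed $o\in M^n$ one gets
\begin{align*}
\limsup_{R\to\infty} R^{q-n}\int_{B_R(o)}u^{\frac{pq}{n-p}}=\limsup_{R\to\infty}\int_{B_R(o)}u^{p_S}\leq\int_{M^n}u^{p_S}<\infty.
\end{align*}
It remains to check that $q=n$ satisfies the strict exponent restriction $q>\frac{(n^2-3n+1)p+n}{(n-1)p}$ for every $1<p<n$. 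This is the single small inequality the argument relies on, and it is precisely the observation already recorded in the excerpt; one sees it by clearing denominators, which reduces the inequality to $(2n-1)p>n$, and this holds for all $n\geq 2$ and $p>1$ since $(2n-1)p>2n-1>n$.

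With both hypotheses of \autoref{thm:main1} (B) verified, the theorem forces $\left(M^n,g\right)$ to be isometric to $\mathbb{R}^n$ with its flat metric, contradicting the nonflatness of $M^n$. Therefore no Sobolev minimizer of $\mathcal{S}_p\left(M^n\right)$ can exist on a nonflat such $M^n$, which is the desired conclusion. The only ``obstacle'' here is the verification of the exponent inequality, which is elementary; all the substantive work has already been carried out inside \autoref{thm:main1}, which is exactly why the corollary follows immediately.
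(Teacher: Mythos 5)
Your proof is correct and takes essentially the same route as the paper: the paper also records that $n>\frac{(n^2-3n+1)p+n}{(n-1)p}$ and then applies part (B) of \autoref{thm:main1} with $q=n$, using $u\in L^{p_S}(M^n)$ to satisfy the integral growth hypothesis and derive a contradiction with nonflatness.
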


In fact, F. Nobili and I.Y. Violo told us that they had proved this conclusion stated in the above corollary in \cite{Nob-Vio24} by a completely different method after we posted this paper on arXiv. It seems that their proof depends heavily on the optimal quantitative non-collapsing volume property and the sharp expression of Sobolev constant.

Additionally, we examine solutions that may possess infinite energy yet adhere to an appropriate condition at infinity. If $p=2$, then
\begin{align*}
\dfrac{(n-3p)(p-1)(n-p)}{p^2\left(n+2-3p\right)}=\dfrac{(n-6)(n-2)}{4(n-4)}.
\end{align*}
Consequently, we obtain a result of Ciraolo, Farina and Polvara \cite[Theorem 1.1, (ii)]{CirFarPol24classification} which extend a result of Catino and Monticelli \cite[Theorem 1.4]{CatMon24semilinear}.
\vspace{2ex}

Secondly, we examine the classification results for solutions to the Liouville equation \eqref{eq:critical-n} on an $n$-dimensional Riemannian manifold.
\vspace{2ex}

\begin{defn}A function $u\in W^{1,n}_{loc}\left(M^n\right)\cap L^{\infty}_{loc}\left(M^n\right)$ is said to be a weak solution to \eqref{eq:critical-n} if
\begin{align*}
\int \abs{\nabla u}^{p-2}\hin{\nabla u}{\nabla\phi}=\int e^{nu}\phi,\quad\forall \phi\in C_0^{\infty}\left(M^n\right).
\end{align*}
\end{defn}

The second main result, which extends a result by Ciraolo, Farina, and Polvara \cite[Theorem 1.2]{CirFarPol24classification}, is as follows:

\begin{main}\label{thm:main2}
Let $M^n$ be a complete Riemannian manifold of dimension $n$ and with nonnegative Ricci curvature. Let $F$ be any positive nondecreasing function satisfying
\begin{align*}
\int^{\infty}\dfrac{\dif s}{sF(s)}=\infty.
\end{align*}
If there is a weak solution $u$ to the Liouville equation \eqref{eq:critical-n} in $M^n$, i.e.,
\begin{align*}
-\Delta_nu=e^{nu},
\end{align*}
such that
\begin{align*}
u(x)\geq-\dfrac{n}{n-1}\ln\left(r(x)F^{1/2}\left(r(x)\right)\right)-C,\quad \forall r(x)\geq C^{-1},
\end{align*}
for some positive constant $C$, where $r(x)=\mathrm{dist}(x,o)$ is the distance function from $x$ to some fixed point $o\in M^n$, then $M^n$ is isometric to the Euclidean space $\mathbb{R}^n$ and the solution is given by
\begin{align*}
u(x)=-\ln\left(a+b\abs{x-x_0}^{\frac{n}{n-1}}\right) \quad\mbox{with}\quad \frac{n^n}{(n-1)^{n-1}}ab^{n-1}=1,
\end{align*}
where $x_0\in\mathbb{R}^n$ is a fixed point, $a>0$ and $ b>0$.
\end{main}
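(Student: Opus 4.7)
The plan is to reduce the problem to an integral rigidity statement for the auxiliary function $v := e^{-u}$, which is the natural analogue, for the Liouville equation, of the substitution $v = u^{-p/(n-p)}$ used for the critical $p$-Laplace equation in Theorem~\ref{thm:main1}. A direct computation shows that $v$ is a positive weak solution of
\begin{equation*}
    v\,\Delta_n v = (n-1)\,\abs{\nabla v}^n + 1 \qquad \text{on } M^n,
\end{equation*}
and that on an Aubin--Talenti bubble $v(x)=a+b\abs{x-x_0}^{n/(n-1)}$ in $\mathbb{R}^n$ the $n$-Laplacian $\Delta_n v\equiv 1/a$ is constant. The asymptotic hypothesis on $u$ translates to the growth control $v(x)\leq C\bigl(r(x)\,F^{1/2}(r(x))\bigr)^{n/(n-1)}$, which will drive the cutoff estimates at the final step.

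Next I would invoke a Cheng--Yau type gradient estimate for positive solutions of the Liouville equation, the analogue of the estimate underlying Theorem~\ref{thm:main1}, to obtain under $\mathrm{Ric}\geq 0$ a universal pointwise bound controlling $\abs{\nabla v}^n$ by $v$; this bound is saturated on the bubble profiles, and in combination with the growth control just stated it prevents the boundary remainders produced below from blowing up.

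The core of the argument is a Bochner-type computation for $v$ with respect to the linearised $n$-Laplacian, combined with the sharp nonlinear Kato inequality (the main new tool of the paper). The sharp Kato constant is chosen so that equality holds exactly on the bubbles; this converts the $\abs{\nabla^2 v}^2$ term in Bochner into $\abs{\nabla\abs{\nabla v}}^2$ and leaves a non-negative Kato-deficit measuring deviation from a bubble. Testing the resulting differential identity against a radial cutoff $\varphi_R$, integrating by parts, and discarding $\mathrm{Ric}(\nabla v,\nabla v)\geq 0$ yields
\begin{equation*}
    0 \leq \int_M \bigl(\mathrm{Ric}(\nabla v,\nabla v)+\text{Kato deficit}\bigr)\,\varphi_R \leq \int_M \mathcal{R}\bigl(v,\abs{\nabla v}\bigr)\,\abs{\nabla\varphi_R},
\end{equation*}
where $\mathcal{R}$ is a polynomial expression in $v$ and $\abs{\nabla v}$. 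The Khas'minskii-type condition $\int^{\infty}\mathrm{d}s/(sF(s))=\infty$ is precisely what is needed to build $\varphi_R\uparrow 1$ making the right-hand side tend to zero as $R\to\infty$, once the Cheng--Yau estimate and the growth bound on $v$ are inserted into $\mathcal{R}$.

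Passing to the limit would force both non-negative integrands on the left to vanish pointwise. Equality in the sharp Kato inequality pins down $\nabla^2 v$ to a specific rank-one-plus-scalar form; equality in the Bochner step forces $\mathrm{Ric}(\nabla v,\nabla v)\equiv 0$; together these imply that $\abs{\nabla v}$ depends only on $v$, that the regular level sets of $v$ are totally umbilical, and that the annular regions of $\set{\abs{\nabla v}>0}$ split as warped products with explicit warping. A standard cone/splitting argument then identifies $(M^n,g)$ with Euclidean space and $v$ with the announced profile, the Aubin--Talenti normalisation $n^n a b^{n-1}/(n-1)^{n-1}=1$ being forced by plugging back into the equation. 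I expect the main obstacle to be the boundary step: calibrating the Cheng--Yau gradient estimate, the sharp Kato deficit, and the Khas'minskii exhaustion so that the logarithmic slack built into $F$ is exactly absorbed requires all three tools to be tuned to one another, and a less sharp Kato inequality would fail at this stage.
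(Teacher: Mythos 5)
Your core strategy — pass to $v=e^{-u}$, use the Bochner identity for $\mathbf{W}=\abs{\nabla v}^{n-2}\nabla v$ with the sharp nonlinear Kato inequality, show that the $P$-function $f$ is constant, and finish with a splitting/homothety argument — is the one in the paper, and the translation of the hypothesis to $v(x)\leq C\bigl(r(x)F^{1/2}(r(x))\bigr)^{n/(n-1)}$ is correct. However there is one concrete gap and one imprecision in the mechanism.

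The gap is the invocation of ``a Cheng--Yau type gradient estimate for positive solutions of the Liouville equation.'' Theorem~\ref{thm:Cheng-Yau} is proved only for $1<p<n$, and its proof uses the quantity $H=u^{p^2/(n-p)}$, which is undefined at $p=n$; no such estimate is established (or claimed) for the Liouville equation in this paper, and it is a genuine open question whether one holds. More importantly, it is not needed. The only pointwise relation the proof of Theorem~\ref{thm:main2} uses is $f\geq(n-1)v^{-1}\abs{\nabla v}^n$, which is an immediate consequence of the \emph{definition} $f=(n-1)v^{-1}\abs{\nabla v}^n+v^{-1}$, not an a~priori estimate. The quantitative control on $\abs{\nabla v}$ enters only through an integral inequality,
\begin{align*}
\int_{B_{R/2}}f\,v^{1-q}\leq C_q R^{-n}\int_{B_R}v^{n-q}\qquad(q<n),
\end{align*}
obtained directly by multiplying $\Delta_n v=f$ by $v^{1-q}\eta^n$ and integrating by parts. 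With $q=n+\tfrac2n-2$ and the growth bound on $v$, together with Bishop--Gromov, this gives $\int_{B_{R/2}}f\,v^{3-n-2/n}\leq CR^2F(R)$, which is exactly what Theorem~\ref{thm:basic-n} requires.

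The imprecision is the phrase ``building $\varphi_R\uparrow 1$ making the right-hand side tend to zero as $R\to\infty$.'' The right-hand side does not tend to zero and is not designed to. After Cauchy--Schwarz the cutoff inequality has the \emph{self-referential} Karp form $G_{j+1}^2\leq C\,(G_{j+1}-G_j)\,H_{j+1}$, where $G_j$ collects the Bochner--Kato bulk term over $B_{2^j}$ and $H_j$ is the annulus term dominated by $F(2^j)$. This telescopes to $\sum_j 1/F(2^j)\leq C/G_0<\infty$, which contradicts the hypothesis $\int^\infty\!\mathrm{d}s/(sF(s))=\infty$ unless $G_j\equiv 0$, i.e.\ $\trace\mathbf{E}^2\equiv 0$. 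The dichotomy structure, not a vanishing boundary term, is what converts the Khas'minskii condition into rigidity. With these two corrections — drop the gradient estimate and replace the naive cutoff limit by the Karp telescope from Theorem~\ref{thm:basic-n} — your outline matches the paper's proof, and the final Tashiro-type identification of $(M^n,g)$ with $\mathbb{R}^n$ (via $\nabla^2 h=c^2 g$ for $h=\tfrac12\abs{\mathbf{W}}^2$) is a cleaner route than the warped-product/cone argument you sketch.
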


\subsection{Strategy of proof}

The strategy of our proof is to adopt the main idea of \cite{GidSpr81global, SerZou02cauchy, HeSunWan24optimal, Ou22on, CatMon24semilinear, CirFarPol24classification, Vet24note}, which is to state some integral estimates via the method of integration by parts. In order to state several new integral inequality and deal with the ``error" terms by more careful analysis, first of all, we need to exploit a new and sharp nonlinear Kato inequality (see \autoref{lem:preliminary-1} in \autoref{sec:preliminaries}). The basic idea and routine are as follows.

By setting
\begin{align*}
w=\left(\dfrac{n-p}{p}\right)^{\frac{p-1}{p}}u^{-\frac{p}{n-p}}\quad\mbox{and}\quad f=\dfrac{n(p-1)}{p}w^{-1}\abs{\nabla w}^{p}+w^{-1},
\end{align*}
if $u$ is a positive solution to the critical $p$-Laplace equation \eqref{eq:critical-p}, and setting
\begin{align*}
w=e^{-u}\quad\mbox{and}\quad f=(n-1)w^{-1}\abs{\nabla w}^{n}+w^{-1},
\end{align*}
if $u$ is a solution to the Liouville equation \eqref{eq:critical-n}, we find that
\begin{align*}
\Delta_pw=f.
\end{align*}
The function $f$ is referred to as the $P$-function, see \cite{CirFarPol24classification} for semilinear case. Denote by
\begin{align*}
\mathbf{W}=\abs{\nabla w}^{p-2}\nabla w \quad\mbox{and}\quad \mathbf{E}=\nabla\mathbf{W}-\dfrac{\Div\mathbf{W}}{n}g.
\end{align*}
Direct computation yields (see \autoref{lem:bochner})
\begin{align*}
\Div\left(w^{1-n}\mathbf{E}\left(\mathbf{W}\right)\right)=w^{1-n}\trace\mathbf{E}^2+w^{1-n}Ric\left(\mathbf{W},\mathbf{W}\right).
\end{align*}
We can express the vector field $\mathbf{E\left(W\right)}$ as follows (see \autoref{lem:EW})
\begin{align*}
\mathbf{E}\left(\mathbf{W}\right)=\dfrac{1}{n(p-1)}w\abs{\nabla w}^{p-2}\mathbf{A}\left(\nabla f\right)
\end{align*}
where
\begin{align*}
\mathbf{A}(X)=X+(p-2)\abs{\nabla w}^{-2}\hin{X}{\nabla w}\nabla w,\quad\forall X\in TM.
\end{align*}
We have
\begin{align*}
\dfrac{1}{n(p-1)}\Div\left(w^{2-n}\abs{\nabla w}^{p-2}\mathbf{A}\left(\nabla f\right)\right)=w^{1-n}\trace\mathbf{E}^2+w^{1-n}Ric\left(\mathbf{W},\mathbf{W}\right).
\end{align*}
The nonlinear Kato inequality (see the third part in \autoref{lem:preliminary-1}) gives
\begin{align*}
\trace\mathbf{E}^2\geq\dfrac{1}{(n-1)n(p-1)}w^2\abs{\nabla w}^{-2}\hin{\mathbf{A}\left(\nabla f\right)}{\nabla f}.
\end{align*}
Thus, if the Ricci curvature is nonnegative, then it follows from the above
\begin{align*}
\Div\left(w^{2-n}\abs{\nabla w}^{p-2}\mathbf{A}\left(\nabla f\right)\right)\geq\dfrac{1}{n-1}w^{3-n}\abs{\nabla w}^{-2}\hin{\mathbf{A}\left(\nabla f\right)}{\nabla f}.
\end{align*}

For each constant $\alpha$, there holds
\begin{align*}
\Div\left(f^{-\alpha}w^{2-n}\abs{\nabla w}^{p-2}\mathbf{A}\left(\nabla f\right)\right)=&f^{-\alpha}\Div\left(w^{2-n}\abs{\nabla w}^{p-2}\mathbf{A}\left(\nabla f\right)\right)-\alpha f^{-\alpha-1}w^{2-n}\abs{\nabla w}^{p-2}\hin{\mathbf{A}\left(\nabla f\right)}{\nabla f}.
\end{align*}
We conclude that
\begin{align*}
\Div\left(f^{-\alpha}w^{2-n}\abs{\nabla w}^{p-2}\mathbf{A}\left(\nabla f\right)\right)\geq\left(\dfrac{1}{n-1}w^{-1}+\left(\dfrac{n(p-1)}{(n-1)p}-\alpha \right)w^{-1}\abs{\nabla w}^p\right)f^{-\alpha-1}w^{3-n}\abs{\nabla w}^{-2}\hin{\mathbf{A}\left(\nabla f\right)}{\nabla f}.
\end{align*}
Hence, for every $\alpha$ satisfying
$$\alpha<\dfrac{n(p-1)}{(n-1)p}$$
there holds
\begin{align*}
\Div\left(f^{-\alpha}w^{2-n}\abs{\nabla w}^{p-2}\mathbf{A}\left(\nabla f\right)\right)\geq C_{\alpha}^{-1}f^{-\alpha}w^{3-n}\abs{\nabla w}^{-2}\hin{\mathbf{A}\left(\nabla f\right)}{\nabla f}.
\end{align*}
Here $C_{\alpha}$ is a positive constant depending only on $n$, $p$ and $\alpha$. Based on this inequality, one can prove that if $f$ is not a constant function, then for each real number
$$\alpha<\dfrac{n(p-1)}{(n-1)p}$$
we have (see \autoref{thm:basic-p} and \autoref{thm:basic-n})
\begin{align*}
\liminf_{R\to\infty}R^{-2}\int_{B_{R}\setminus B_{R/2}}f^{-\alpha}w^{1-n}\abs{\nabla w}^{2p-2}=\infty.
\end{align*}
Moreover, for every positive nondecreasing function $F(s)$ satisfying
\begin{align*}
\int^{\infty}\dfrac{\dif s}{sF(s)}=\infty,
\end{align*}
we have
\begin{align*}
\limsup_{R\to\infty}\dfrac{1}{R^2F(R)}\int_{B_{R}\setminus B_{R/2}}f^{-\alpha}w^{1-n}\abs{\nabla w}^{2p-2}=\infty.
\end{align*}
The rest of the proof is to prove the following integral estimate
\begin{align*}
\int_{B_{R}}f^{-\alpha}w^{1-n}\abs{\nabla w}^{2p-2}\leq C_{\alpha}R^{2}F(R)
\end{align*}
under our assumptions.

In order to prove the third part $(C)$ of \autoref{thm:main1}, we need also to establish Cheng-Yau type gradient estimate for positive solutions to the critical $p$-Laplace ($1<p<n$) equation by Moser iteration (see \autoref{thm:Cheng-Yau} in \autoref{sec:gradient-estimate}). Such gradient estimate for the case of $p=2$, observed first by Fogagnolo, Malchiodi and Mazzieri \cite{FogMalMaz23note}, can be established for solutions to the critical Laplace equation by the technique pioneered by Yau \cite{Yau75Uni} in order to obtain gradient estimates for harmonic functions under lower Ricci curvature bounds.
\medskip

The paper is organized as follows: In \autoref{sec:preliminaries}, we gather several useful inequalities and formulas from both algebraic and geometric contexts. Particularly, we introduce a significant nonlinear Kato inequality \eqref{eq:Kato}, which is essential in our proofs. In \autoref{sec:integral}, we establish several integral inequalities for solutions to either the critical $p$-Laplace equation or the Liouville equation. Although some of these inequalities are well-established, others present new developments. Specifically, the integral inequalities presented in \autoref{thm:basic-p} and \autoref{thm:basic-n} are new and play a crucial role in the application to classification results. The main idea behind the proofs of these integral inequalities originates from Karp \cite{Kar82subharmonic}. In \autoref{sec:gradient-estimate}, we employ Moser iteration to derive a local Cheng-Yau type gradient estimate for the critical $p$-Laplace equation when $1<p < n$. This gradient estimate is vital in studying the classification results for solutions with polynomial decay. In \autoref{sec:rigidity}, we discuss the rigidity results for solutions to the critical $p$-Laplace equations and Liouville equations, providing necessary proofs for related results, which constitutes the primary focus of this paper.

\section{Preliminaries}\label{sec:preliminaries}

Let $p > 1$ and $\Omega \subset M^n$ be a domain. Let $w \in C^3\left(\Omega\right)$ be a positive function satisfying the following regularity condition:
\begin{align}\label{eq:regularity}
\nabla w \neq 0, \quad \text{in } \Omega.
\end{align}
The vector field $\mathbf{W}$ defined in $\Omega$ is given by
\begin{align*}
\mathbf{W} = \abs{\nabla w}^{p-2} \nabla w.
\end{align*}
This vector field plays a crucial role in the study of the $p$-Laplacian. We consider the endomorphisms $\mathbf{E},\, \mathbf{A}: T\Omega \to T\Omega$, which are defined for every tangent vector $X \in T\Omega$ as follows:
\begin{align*}
\mathbf{E}(X) = \nabla_X \mathbf{W} - \dfrac{\Div \mathbf{W}}{n} X,
\end{align*}
and
\begin{align*}
\mathbf{A}(X) = X + (p-2)\abs{\nabla w}^{-2} \hin{X}{\nabla w}\nabla w,
\end{align*}
respectively. Denote by $\mathbf{E}^*$ the dual endomorphism of $\mathbf{E}$.

We collect several useful properties of $\mathbf{W}$, $\mathbf{E}$, and $\mathbf{A}$ as follows.

\begin{lem}\label{lem:preliminary-1}
\begin{enumerate}[(1)]
\item The endomorphism $\mathbf{EA}$ is self-dual, i.e.,
\begin{align}\label{eq:EA}
\mathbf{E}\mathbf{A} = \mathbf{A}\mathbf{E}^*.
\end{align}

\item The trace of $\mathbf{E}^2$ vanishes if and only if $\mathbf{E}$ vanishes when $p>1$. More specifically,
\begin{align}\label{eq:E}
\dfrac{2(p-1)}{1+(p-1)^2}\abs{\mathbf{E}}^2 \leq \trace\mathbf{E}^2 \leq \abs{\mathbf{E}}^2.
\end{align}

\item If $p>1$, then the following nonlinear Kato inequality holds:
\begin{align}\label{eq:Kato}
\trace\mathbf{E}^2 \geq \dfrac{n(p-1)}{n-1} \dfrac{\hin{\mathbf{E}(\mathbf{W})}{\mathbf{A}^{-1}(\mathbf{E}(\mathbf{W}))}}{\abs{\mathbf{W}}^2}.
\end{align}

\item If $p>1$, then
\begin{align}\label{eq:E'}
\abs{\mathbf{W}}^{-2}\abs{\mathbf{E}(\mathbf{W})}^2 \leq \max\set{\dfrac{n-1}{n},\, \dfrac{1}{2(p-1)}}\trace\mathbf{E}^2.
\end{align}
\end{enumerate}
\end{lem}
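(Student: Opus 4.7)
The plan is to reduce all four assertions to elementary linear algebra by working pointwise in a local orthonormal frame $\set{e_1,\ldots,e_n}$ chosen so that $e_1=\nabla w/\abs{\nabla w}$, which is available throughout $\Omega$ by the regularity assumption \eqref{eq:regularity}. In such a frame $\mathbf{A}$ is represented by $\mathrm{diag}(p-1,1,\ldots,1)$ and $\mathbf{W}=\abs{\nabla w}^{p-1}e_1$. Differentiating the identity $\mathbf{W}=\abs{\nabla w}^{p-2}\nabla w$ and using $\nabla_X\nabla w=\mathrm{Hess}\,w(X)$ gives $\nabla_X\mathbf{W}=\abs{\nabla w}^{p-2}\mathbf{A}(\mathrm{Hess}\,w(X))$, so the matrix entries of $\nabla\mathbf{W}$ are $(\nabla\mathbf{W})_{ij}=\abs{\nabla w}^{p-2}\mathbf{A}_{ii}H_{ij}$, where $H_{ij}$ denotes the components of the Hessian in the same frame. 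Part (1) is then immediate: since $\mathbf{A}$ and $\mathrm{Hess}\,w$ are self-adjoint, the adjoint of $\nabla\mathbf{W}$ equals $\abs{\nabla w}^{p-2}\mathrm{Hess}\,w\circ\mathbf{A}$, and multiplying $\mathbf{E}=\nabla\mathbf{W}-n^{-1}(\Div\mathbf{W})\mathrm{Id}$ by $\mathbf{A}$ on opposite sides yields \eqref{eq:EA}.

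For parts (2)--(4) I would introduce the shorthand $a=H_{11}$, $b_j=H_{1j}$ for $j\geq 2$, the $(n-1)\times(n-1)$ block $C=(H_{ij})_{i,j\geq 2}$, and $T=(p-1)a+\trace C=\abs{\nabla w}^{2-p}\Div\mathbf{W}$. Expanding and collecting terms produces the two identities
\begin{align*}
\abs{\nabla w}^{-2(p-2)}\trace\mathbf{E}^2 &= (p-1)^2a^2+2(p-1)\sum_{j\geq 2}b_j^2+\trace C^2-\dfrac{T^2}{n},\\
\abs{\nabla w}^{-2(p-2)}\abs{\mathbf{E}}^2 &= (p-1)^2a^2+((p-1)^2+1)\sum_{j\geq 2}b_j^2+\trace C^2-\dfrac{T^2}{n}.
\end{align*}
Subtracting gives $\abs{\mathbf{E}}^2-\trace\mathbf{E}^2=\abs{\nabla w}^{2(p-2)}(p-2)^2\sum_j b_j^2\geq 0$, which is the upper bound in \eqref{eq:E}. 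For the lower bound, with $\lambda=2(p-1)/(1+(p-1)^2)$ the difference $\trace\mathbf{E}^2-\lambda\abs{\mathbf{E}}^2$ factors as $(1-\lambda)\bigl[(p-1)^2a^2+\trace C^2-T^2/n\bigr]$, and this bracket is bounded below via the Cauchy--Schwarz estimate $\trace C^2\geq(\trace C)^2/(n-1)$, which reduces it to a manifestly nonnegative quadratic form in $(a,\trace C)$.

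For part (3), a short computation in the same frame yields $\mathbf{E}(e_1)=\abs{\nabla w}^{p-2}(\alpha e_1+\sum_{j\geq 2}b_j e_j)$ with $\alpha=[(n-1)(p-1)a-\trace C]/n$. Since $\mathbf{A}^{-1}=\mathrm{diag}(1/(p-1),1,\ldots,1)$, this gives
\begin{align*}
\dfrac{\hin{\mathbf{E}(\mathbf{W})}{\mathbf{A}^{-1}(\mathbf{E}(\mathbf{W}))}}{\abs{\mathbf{W}}^2}=\abs{\nabla w}^{2(p-2)}\left(\dfrac{\alpha^2}{p-1}+\sum_{j\geq 2}b_j^2\right).
\end{align*}
The Kato inequality \eqref{eq:Kato} then splits into two pieces: the $b_j$-contribution reduces to the trivial $2(p-1)\geq n(p-1)/(n-1)$, valid for $n\geq 2$; the $\alpha$-contribution amounts to $(p-1)^2a^2+\trace C^2-T^2/n\geq n\alpha^2/(n-1)$, which, after inserting $\trace C^2\geq(\trace C)^2/(n-1)$ and expanding both sides in the variables $x=(p-1)a$ and $y=\trace C$, collapses to the identity $(n-1)x^2/n-2xy/n+y^2/(n(n-1))=((n-1)x-y)^2/(n(n-1))$. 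I expect this last algebraic verification to be the main technical hurdle, since the constant $n(p-1)/(n-1)$ is sharp: equality is forced exactly when $C$ is proportional to the identity on the orthogonal complement of $\nabla w$ and all $b_j$ vanish, which is the precise rigidity regime that drives the rest of the paper.

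Part (4) is then a consequence of the work already done. Splitting $\abs{\mathbf{W}}^{-2}\abs{\mathbf{E}(\mathbf{W})}^2=\abs{\nabla w}^{2(p-2)}(\alpha^2+\sum_j b_j^2)$, the $\alpha^2$-summand is controlled by $(n-1)/n$ times $(p-1)^2a^2+\trace C^2-T^2/n$ (the same estimate used in part (3)), while the $\sum_j b_j^2$-summand is controlled by $1/(2(p-1))$ times $2(p-1)\sum_j b_j^2$; combining the two and taking the maximum of the two constants produces \eqref{eq:E'}.
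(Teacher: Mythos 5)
Your proof is correct and takes essentially the same route as the paper: point in the frame with $e_1 = \nabla w/\abs{\nabla w}$, observe that $\mathbf{A}$ is diagonal there, and reduce each assertion to an application of Cauchy--Schwarz to the traceless lower block. The only cosmetic difference is that you parametrize everything by the Hessian components $H_{ij}$ and use $\trace C^2 \geq (\trace C)^2/(n-1)$ directly, whereas the paper works with the entries $\mathbf{E}_{ij}$ (using the relation $(p-1)\mathbf{E}_{i1}=\mathbf{E}_{1i}$ coming from $\mathbf{EA}=\mathbf{AE}^*$) and first rotates $\{e_2,\ldots,e_n\}$ to diagonalize the lower block before applying the same inequality $\sum_j \mathbf{E}_{jj}^2\geq \frac{n}{n-1}\mathbf{E}_{11}^2$; these are equivalent.
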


\begin{rem}
Here we would like to give some comments on the above lemma.
\begin{itemize}
\item It should be mentioned that the second inequality in Equation \eqref{eq:E} has been established in \cite{SerZou02cauchy}. We also emphasize that the first inequality in Equation \eqref{eq:E} was rigorously proven by He and the authors \cite[Lemma 3.3]{HeSunWan24optimal}, which plays a crucial role in the study of Liouville theorems for subcritical quasilinear equations on Riemannian manifolds. However, to address Liouville theorems in Euclidean space, it is sufficient to employ the weaker inequality $\trace\mathbf{E}^2 \geq 0$, as demonstrated in \cite{SerZou02cauchy}.

\item The inequality \eqref{eq:Kato} is sharp. To see this, one can consider the function
$$w(x)=-\dfrac{p-1}{n-p}\abs{x}^{-\frac{n-p}{p-1}}$$
in the Euclidean space $\mathbb{R}^n$. Direct computation yields $\Delta_pw=0$ and for all $x\neq 0$
\begin{align*}
\mathbf{W}=\abs{x}^{-n}x,\quad\mathbf{E}=\abs{x}^{-n}\left(\mathbf{Id}-n\abs{x}^{-2}xx^{T}\right).
\end{align*}
Thus
 \begin{align*}
 \mathbf{E}^2=\abs{x}^{-2n}\left(\mathbf{Id}+\left(n^2-2n\right)\abs{x}^{-2}xx^{T}\right),\quad \mathbf{E\left(W\right)}=(1-n)\abs{x}^{-2n}x,\quad \mathbf{A}^{-1}\left(\mathbf{E\left(W\right)}\right)=\dfrac{1-n}{p-1}\abs{x}^{-2n}x.
 \end{align*}
 We have
 \begin{align*}
 \trace\mathbf{E}^2=\left(n^2-n\right)\abs{x}^{-2n}\quad\quad\mbox{and}\quad\quad \dfrac{\hin{\mathbf{E}(\mathbf{W})}{\mathbf{A}^{-1}(\mathbf{E}(\mathbf{W}))}}{\abs{\mathbf{W}}^2}=\dfrac{\left(n-1\right)^2}{p-1}\abs{x}^{-2n},
 \end{align*}
 which imply
 \begin{align*}
 \trace\mathbf{E}^2=\dfrac{n(p-1)}{n-1}\dfrac{\hin{\mathbf{E}(\mathbf{W})}{\mathbf{A}^{-1}(\mathbf{E}(\mathbf{W}))}}{\abs{\mathbf{W}}^2},
 \quad\forall x\neq0.
 \end{align*}
 \item
 If $p=2$ and $w$ is a harmonic function, i.e., $\Delta w=0$, then the inequality \eqref{eq:Kato} simplifies to
 \begin{align*}
 \abs{\nabla^2w}^2\geq\dfrac{n}{n-1}\abs{\nabla\abs{\nabla w}}^2
 \end{align*}
 provided $\nabla w\neq0$.
 This is the well-known Kato inequality for harmonic functions.
\end{itemize}
\end{rem}

\begin{proof}[Proof of \autoref{lem:preliminary-1}]
{\bf Firstly, we prove that $\mathbf{EA}$ is a self-dual endomorphism, i.e., we prove the identity \eqref{eq:EA}.}

This is a straightforward computation. In fact, we compute for each tangent vector $X\in T\Omega$
\begin{align*}
\nabla_X\mathbf{W}=&\nabla_X\left(\abs{\nabla w}^{p-2}\nabla w\right)\\
=&\abs{\nabla w}^{p-2}\nabla_X\nabla w+(p-2)\abs{\nabla w}^{p-4}\hin{\nabla_X\nabla w}{\nabla w}\nabla w\\
=&\abs{\nabla w}^{p-2}\left(\nabla_X\nabla w+(p-2)\abs{\nabla w}^{-2}\hin{\nabla_X\nabla w}{\nabla w}\nabla w\right).
\end{align*}
That is
\begin{align}\label{eq:EA1}
\nabla_X\mathbf{W}=\abs{\nabla w}^{p-2}\mathbf{A}\left(\nabla_X\nabla w\right).
\end{align}
Since
\begin{align*}
\hin{\nabla_X\nabla w}{Y}=\hin{\nabla_Y\nabla w}{X},\quad\forall X, Y\in T\Omega,
\end{align*}
we conclude from \eqref{eq:EA1}
\begin{align*}
\hin{\nabla_{\mathbf{A}\left(X\right)}\mathbf{W}}{Y}=&\hin{\abs{\nabla w}^{p-2}\mathbf{A}\left(\nabla_{\mathbf{A}\left(X\right)}\nabla w\right)}{Y}\\
=&\hin{\abs{\nabla w}^{p-2}\nabla_{\mathbf{A}\left(X\right)}\nabla w}{\mathbf{A}\left(Y\right)}\\
=&\hin{\abs{\nabla w}^{p-2}\nabla_{\mathbf{A}\left(Y\right)}\nabla w}{\mathbf{A}\left(X\right)}\\
=&\hin{\abs{\nabla w}^{p-2}\mathbf{A}\left(\nabla_{\mathbf{A}\left(Y\right)}\nabla w\right)}{X},
\end{align*}
which implies
\begin{align}\label{eq:EA2}
\hin{\nabla_{\mathbf{A}\left(X\right)}\mathbf{W}}{Y}=\hin{\nabla_{\mathbf{A}\left(Y\right)}\mathbf{W}}{X}.
\end{align}
Here we have used the fact that the endomorphism $\mathbf{A}$ is self-dual. Consequently, it follows from the definition of the endomorphism $\mathbf{E}$ and \eqref{eq:EA2}
\begin{align*}
\hin{\mathbf{E}\left(\mathbf{A}\left(X\right)\right)}{Y}=&\hin{\nabla_{\mathbf{A}\left(X\right)}\mathbf{W}-\dfrac{\Div\mathbf{W}}{n}\mathbf{A}\left(X\right)}{Y}\\
=&\hin{\nabla_{\mathbf{A}\left(Y\right)}\mathbf{W}-\dfrac{\Div\mathbf{W}}{n}\mathbf{A}\left(Y\right)}{X}\\
=&\hin{\mathbf{E}\left(\mathbf{A}\left(Y\right)\right)}{X}\\
=&\hin{Y}{\mathbf{A}\left(\mathbf{E}^*\left(X\right)\right)}.
\end{align*}
Thus, we obtain \eqref{eq:EA}.
\vspace{2ex}

{\bf Secondly, we prove that $\trace\mathbf{E}^2=0$ if and only if $\abs{\mathbf{E}}^2=0$. More specifically, we will verify the inequality \eqref{eq:E}.}

Since $\abs{\mathbf{E}}^2=\trace\left(\mathbf{E}\mathbf{E}^*\right)$, it follows from the Cauchy-Schwarz inequality
\begin{align*}
\trace\mathbf{E}^2\leq\abs{\mathbf{E}}^2.
\end{align*}
To prove the first inequality of \eqref{eq:E}, by observing $\nabla w\neq0$, we can choose a local orthonormal frames $\set{e_i}$ at a considering point such that
\begin{align*}
 \nabla w=\abs{\nabla w}e_1.
\end{align*}
We have
\begin{align*}
\mathbf{A}e_i=\lambda_ie_i,
\end{align*}
where $\lambda_1=p-1, \lambda_2=\lambda_3=\dotsm=\lambda_n=1$. Denote by $\mathbf{E}_{ij}=\hin{\mathbf{E}e_i}{e_j}$. It follows from \eqref{eq:EA}
\begin{align*}
\mathbf{E}_{ij}\lambda_j=\lambda_i\mathbf E_{ji},\quad\forall i, j.
\end{align*}
Thus, for every $i, j\in\set{2,3,\dots,n}$
\begin{align*}
(p-1)\mathbf{E}_{i1}=\mathbf{E}_{1i},\quad \mathbf{E}_{ij}=\mathbf{E}_{ji}.
\end{align*}
Up to a rotation without loss of generality, we may assume
\begin{align*}
\mathbf{E}_{ij}=\mathbf{E}_{ji}=0,\quad 2\leq i<j\leq n.
\end{align*}
We have
\begin{align}\label{eq:E1}
\trace\mathbf{E}^2=\sum_{i,j=1}^n\mathbf{E}_{ij}\mathbf{E}_{ji}=\sum_{j=1}^n\mathbf{E}_{jj}^2+2(p-1)\sum_{j=2}^n\mathbf{E}_{1j}^2
\end{align}
and
\begin{align}\label{eq:E2}
 \abs{\mathbf{E}}^2=\sum_{i,j=1}^n\mathbf{E}_{ij}^2=\sum_{j=1}^n\mathbf{E}_{jj}^2+\left(1+(p-1)^2\right)\sum_{j=2}^n\mathbf{E}_{1j}^2.
\end{align}
We obtain from \eqref{eq:E1} and \eqref{eq:E2}
\begin{align*}
 \trace\mathbf{E}^2-\dfrac{2(p-1)}{1+(p-1)^2}\abs{\mathbf{E}}^2=\dfrac{(p-2)^2}{1+(p-1)^2}\sum_{j=1}^n\mathbf{E}_{jj}^2
\end{align*}
which implies
\begin{align*}
 \dfrac{2(p-1)}{1+(p-1)^2}\abs{\mathbf{E}}^2\leq\trace\mathbf{E}^2.
\end{align*}
We complete the proof of \eqref{eq:E}.

\vspace{2ex}

{\bf Thirdly, we prove the nonlinear Kato inequality. In other words, we will check the inequality \eqref{eq:Kato}.}

To demonstrate this, we employ the previously defined notation and establish the inequality in the manner outlined below. Since $\trace\mathbf{E}=0$, the Cauchy-Schwarz inequality gives
\begin{align}\label{eq:Kato1}
\sum_{j=1}^n\mathbf{E}_{jj}^2\geq&\mathbf{E}_{11}^2+\dfrac{1}{n-1}\left(\sum_{j=2}^n\mathbf{E}_{jj}\right)^2=\dfrac{n}{n-1}\mathbf{E}_{11}^2.
\end{align}
Direct computation yields for $p>1$
\begin{align}\label{eq:Kato2}
\hin{\mathbf{E}e_1}{\mathbf{A}^{-1}\mathbf{E}e_1}=&\dfrac{1}{p-1}\mathbf{E}_{11}^2+\sum_{j=2}^n\mathbf{E}_{1j}^2.
\end{align}
Thus, it follows from \eqref{eq:E1}, \eqref{eq:Kato1} and \eqref{eq:Kato2}
\begin{align*}
\trace\mathbf{E}^2\geq&\dfrac{n}{n-1}\mathbf{E}_{11}^2+2(p-1)\sum_{j=2}^n\mathbf{E}_{1j}^2\\
\geq&\dfrac{n(p-1)}{n-1}\hin{\mathbf{E}e_1}{\mathbf{A}^{-1}\mathbf{E}e_1}.
\end{align*}
We obtain the nonlinear Kato inequality \eqref{eq:Kato}.
\vspace{2ex}

{\bf Finally, one can check \eqref{eq:E'}.}

In fact, the above argument also yields
\begin{align*}
\abs{\mathbf{E}e_1}^2=&\sum_{j=1}^n\mathbf{E}_{1j}^2\\
\leq&\max\set{\dfrac{n-1}{n},\dfrac{1}{2(p-1)}}\trace\mathbf{E}^2.
\end{align*}
That is, we obtain the inequality \eqref{eq:E'}.
\end{proof}

The next lemma provides a useful expression for the vector field $\mathbf{E}\left(\mathbf{W}\right)$.
\begin{lem}\label{lem:EW}
Assume $w\in C^3\left(\Omega\right)$ is a positive solution to the following quasilinear equation:
\begin{align}\label{eq:critical-1}
 \Delta_pw=f\coloneqq\dfrac{n(p-1)}{p}w^{-1}\abs{\nabla w}^{p}+w^{-1},\quad\text{in}\ \Omega,
\end{align}
and satisfies the regularity condition \eqref{eq:regularity}.
We can express the vector field $\mathbf{E}\left(\mathbf{W}\right)$ as follows
\begin{align}\label{eq:X}
 \mathbf{E}\left(\mathbf{W}\right)=\dfrac{1}{n(p-1)}w\abs{\nabla w}^{p-2}\mathbf{A}\left(\nabla f\right).
\end{align}
\end{lem}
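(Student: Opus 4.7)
The plan is to carry out a direct computation, exploiting the structure of the equation $\Delta_p w = f$ and the identity $\nabla_X \mathbf{W} = |\nabla w|^{p-2}\mathbf{A}(\nabla_X \nabla w)$ already established in \eqref{eq:EA1} during the proof of \autoref{lem:preliminary-1}. Since $\mathbf{E}(\mathbf{W}) = \nabla_{\mathbf W}\mathbf{W} - \tfrac{f}{n}\mathbf{W}$, the target identity \eqref{eq:X} is an identity between two specific tangent vector fields, so the strategy is simply to expand both sides and check that they agree.

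First, I would compute $\nabla f$ from the defining formula $f = \frac{n(p-1)}{p}w^{-1}|\nabla w|^{p} + w^{-1}$. Differentiating and using $\nabla |\nabla w|^{p} = \frac{p}{2}|\nabla w|^{p-2}\nabla|\nabla w|^{2}$, the two terms coming from $w^{-1}$ factors combine pleasantly into $-w^{-1}f\,\nabla w$, yielding the clean expression
\begin{equation*}
\nabla f \;=\; -\,w^{-1} f\,\nabla w \;+\; \tfrac{n(p-1)}{2}\,w^{-1}|\nabla w|^{p-2}\,\nabla |\nabla w|^{2}.
\end{equation*}
This simplification, coming from the particular form of the "Lane–Emden type" right-hand side $f$, is the computational heart of the lemma.

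Next, I would compute $\nabla_{\mathbf W}\mathbf{W}$. By \eqref{eq:EA1} with $X = \mathbf{W}$ and the observation that $\nabla_{\nabla w}\nabla w = \tfrac12\nabla|\nabla w|^{2}$, one gets $\nabla_{\mathbf W}\mathbf{W} = \tfrac12 |\nabla w|^{2p-4}\,\mathbf{A}\bigl(\nabla|\nabla w|^{2}\bigr)$. Substituting this into $\mathbf{E}(\mathbf{W}) = \nabla_{\mathbf W}\mathbf{W} - \tfrac{f}{n}\mathbf{W}$ gives
\begin{equation*}
\mathbf{E}(\mathbf{W}) \;=\; \tfrac12|\nabla w|^{2p-4}\,\mathbf{A}\bigl(\nabla|\nabla w|^{2}\bigr) \;-\; \tfrac{f}{n}\,|\nabla w|^{p-2}\,\nabla w.
\end{equation*}

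Finally, I would plug the formula for $\nabla f$ into $\tfrac{1}{n(p-1)}w|\nabla w|^{p-2}\mathbf{A}(\nabla f)$ and check term by term that the result matches the expression above for $\mathbf{E}(\mathbf W)$. Since $\mathbf{A}$ acts linearly and treats vectors parallel to $\nabla w$ by multiplication by $p-1$, the contribution of $-w^{-1}f\nabla w$ inside $\mathbf A$ produces (after the prefactor $\tfrac{w|\nabla w|^{p-2}}{n(p-1)}$) exactly the term $-\tfrac{f}{n}|\nabla w|^{p-2}\nabla w$, while the contribution of $\tfrac{n(p-1)}{2}w^{-1}|\nabla w|^{p-2}\nabla|\nabla w|^{2}$ produces $\tfrac12 |\nabla w|^{2p-4}\mathbf{A}(\nabla|\nabla w|^{2})$. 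Thus both sides agree. There is no real obstacle here; the only mild subtlety is keeping track of how $\mathbf{A}$ acts on the two pieces of $\nabla f$ (one is a scalar multiple of $\nabla w$, the other has a general direction), and verifying that the numerical factors $n$, $p-1$, $\tfrac12$ line up, which they do precisely because $f$ contains $|\nabla w|^{p}$ with the coefficient $\tfrac{n(p-1)}{p}$.
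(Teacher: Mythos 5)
Your proof is correct, and it is essentially the same argument as the paper's: both rely on the identity $\nabla_X\mathbf{W}=|\nabla w|^{p-2}\mathbf{A}(\nabla_X\nabla w)$ and on the fact that $\mathbf{A}(\nabla w)=(p-1)\nabla w$, and then carry out a direct computation. The only cosmetic difference is that you expand $\nabla f$ first and verify that $\tfrac{1}{n(p-1)}w|\nabla w|^{p-2}\mathbf{A}(\nabla f)$ reproduces $\mathbf{E}(\mathbf{W})$, while the paper starts from $\mathbf{E}(\mathbf{W})=\nabla_{\mathbf{W}}\mathbf{W}-\tfrac{f}{n}\mathbf{W}$ and factors it into the form $w|\nabla w|^{p-2}\mathbf{A}(\nabla(\cdot))$.
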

\begin{proof}
In fact, by definition, $\mathbf{W}=\abs{\nabla w}^{p-2}\nabla w$, we obtain from \eqref{eq:EA1}
\begin{align*}
\nabla_{\mathbf{W}}\mathbf{W}=\abs{\nabla w}^{p-2}\mathbf{A}\left(\nabla_{\mathbf{W}}\nabla w\right)=\abs{\nabla w}^{2p-4}\mathbf{A}\left(\nabla_{\nabla w}\nabla w\right)
\end{align*}
which gives
\begin{align}\label{eq:WW}
\nabla_{\mathbf{W}}\mathbf{W}=\dfrac{1}{p}\abs{\nabla w}^{p-2}\mathbf{A}\left(\nabla\abs{\nabla w}^p\right).
\end{align}
Thus, by definition,
$$\mathbf{E}\left(\mathbf{W}\right)=\nabla_{\mathbf{W}}\mathbf{W}-\dfrac{\Div\mathbf{W}}{n}\mathbf{W},$$
we obtain
\begin{align*}
\mathbf{E}\left(\mathbf{W}\right)=&\dfrac{1}{p}\abs{\nabla w}^{p-2}\mathbf{A}\left(\nabla\abs{\nabla w}^p\right)-\dfrac1n\left(\dfrac{n(p-1)}{p}w^{-1}\abs{\nabla w}^{p}+w^{-1}\right)\mathbf{W}\\
 =&\dfrac{1}{p}\abs{\nabla w}^{p-2}\mathbf{A}\left(\nabla\abs{\nabla w}^p\right)-\dfrac1n\left(\dfrac{n}{p}\abs{\nabla w}^{p}+\dfrac{1}{p-1}\right)w^{-1}\abs{\nabla w}^{p-2}\mathbf{A}\left(\nabla w\right)\\
 =&w\abs{\nabla w}^{p-2}\mathbf{A}\left(\nabla\left(\dfrac1pw^{-1}\abs{\nabla w}^p+\dfrac{1}{n(p-1)}w^{-1}\right)\right).
\end{align*}
Here we have used the fact that $w$ satisfies the partial differential equation \eqref{eq:critical-1}. Consequently, we obtain \eqref{eq:X}.

\end{proof}

In the end of this section, we state a useful Bochner formula for $p$-Laplacian.
\begin{lem}\label{lem:bochner}
 If $w\in C^3\left(\Omega\right)$ solves the quasilinear equation \eqref{eq:critical-1} with the regularity condition \eqref{eq:regularity}, then we have
\begin{align}\label{eq:bochner}
 \Div\left(w^{1-n}\mathbf{E}\left(\mathbf{W}\right)\right)=w^{1-n}\trace\mathbf{E}^2+w^{1-n}Ric\left(\mathbf{W},\mathbf{W}\right).
\end{align}
\end{lem}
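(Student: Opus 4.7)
The plan is to derive the identity by combining the classical Bochner identity for vector fields with the Leibniz rule for the weight $w^{1-n}$. The key cancellation at the end will be supplied by the expression for $\mathbf{E}(\mathbf{W})$ obtained in \autoref{lem:EW}.

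First, I would establish the general pointwise identity: for any $C^2$ vector field $V$,
\begin{align*}
\Div\left(\nabla_V V\right) = \trace\left((\nabla V)^2\right) + \hin{V}{\nabla\left(\Div V\right)} + Ric(V,V).
\end{align*}
This is a direct coordinate computation from $\nabla_i\left(V^j \nabla_j V^i\right) = \left(\nabla_i V^j\right)\left(\nabla_j V^i\right) + V^j \nabla_i \nabla_j V^i$, using the Ricci commutation $\nabla_i \nabla_j V^i = \nabla_j \nabla_i V^i + R_{jk} V^k$. Applying this with $V = \mathbf{W}$ and using $\Div\mathbf{W} = \Delta_p w = f$ from \eqref{eq:critical-1}, I obtain
\begin{align*}
\Div\left(\nabla_{\mathbf{W}} \mathbf{W}\right) = \trace\left((\nabla\mathbf{W})^2\right) + \hin{\mathbf{W}}{\nabla f} + Ric(\mathbf{W}, \mathbf{W}).
\end{align*}

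Next, since $\mathbf{E}(\mathbf{W}) = \nabla_{\mathbf{W}} \mathbf{W} - \frac{f}{n}\mathbf{W}$ and $\Div\left(\frac{f}{n}\mathbf{W}\right) = \frac{1}{n}\hin{\nabla f}{\mathbf{W}} + \frac{f^2}{n}$, together with the algebraic identity $\trace\mathbf{E}^2 = \trace\left((\nabla\mathbf{W})^2\right) - f^2/n$ (obtained by expanding $(\nabla\mathbf{W} - \frac{f}{n}\mathbf{Id})^2$ and taking trace, using $\trace\nabla\mathbf{W} = f$), I arrive at
\begin{align*}
\Div\left(\mathbf{E}(\mathbf{W})\right) = \trace\mathbf{E}^2 + \frac{n-1}{n}\hin{\mathbf{W}}{\nabla f} + Ric(\mathbf{W},\mathbf{W}).
\end{align*}
Expanding $\Div\left(w^{1-n}\mathbf{E}(\mathbf{W})\right)$ by the Leibniz rule produces an additional term $(1-n)w^{-n}\hin{\nabla w}{\mathbf{E}(\mathbf{W})}$. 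To handle it, I invoke \eqref{eq:X} together with the observation that $\hin{\nabla w}{\mathbf{A}(Y)} = (p-1)\hin{\nabla w}{Y}$ for every $Y$, which together give $\hin{\nabla w}{\mathbf{E}(\mathbf{W})} = \frac{w}{n}\hin{\mathbf{W}}{\nabla f}$. Substituting, the two $\hin{\mathbf{W}}{\nabla f}$ contributions cancel exactly, yielding \eqref{eq:bochner}.

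The main (and really only) subtlety lies in this final cancellation: it is the unique place where the precise form of $f$ prescribed in \eqref{eq:critical-1} enters, and it forces the weight exponent to be exactly $1-n$, neither more nor less. Everything else — the vector-field Bochner identity and trace arithmetic — is essentially mechanical, so no significant obstacle is expected beyond careful bookkeeping of the $\hin{\mathbf{W}}{\nabla f}$ coefficients.
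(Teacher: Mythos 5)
Your proposal is correct and follows essentially the same route as the paper's own proof: the same vector-field Bochner identity, the same trace identity $\trace\mathbf{E}^2 = \trace\bigl((\nabla\mathbf{W})^2\bigr) - f^2/n$, and the same use of \autoref{lem:EW} (in the self-adjoint form $\hin{\nabla w}{\mathbf{A}(Y)} = (p-1)\hin{\nabla w}{Y}$) to turn $(n-1)w^{-1}\hin{\nabla w}{\mathbf{E}(\mathbf{W})}$ back into $\frac{n-1}{n}\hin{\mathbf{W}}{\nabla f}$ so that the Leibniz term cancels. The only cosmetic difference is that you make the intermediate identity $\hin{\nabla w}{\mathbf{E}(\mathbf{W})} = \frac{w}{n}\hin{\mathbf{W}}{\nabla f}$ explicit, whereas the paper absorbs it directly into the final $\Div\mathbf{E}(\mathbf{W})$ computation.
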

\begin{proof}
We begin with the following Bochner formula
\begin{align}\label{eq:bochner-1}
 \Div\nabla_{\mathbf{W}}\mathbf{W}=\sum_{i=1}^n\hin{\nabla_{\nabla_{e_i}\mathbf{W}}\mathbf{W}}{e_i}+\hin{\nabla\Div\mathbf{W}}{\mathbf{W}}+Ric\left(\mathbf{W},\mathbf{W}\right),
\end{align}
where $\set{e_i}$ is a local orthonormal tangent frames.
 This is a straightforward verification. In fact, we may assume $\nabla e_i=0$ at a considering point and compute
\begin{align*}
 \Div\nabla_{\mathbf{W}}\mathbf{W}=&\sum_{i=1}^n\hin{\nabla_{e_i}\nabla_{\mathbf{W}}\mathbf{W}}{e_i}\\
 =&\sum_{i=1}^n\hin{R\left(e_i,\mathbf{W}\right)\mathbf{W}+\nabla_{\mathbf{W}}\nabla_{e_i}\mathbf{W}+\nabla_{[e_i,\mathbf{W}]}\mathbf{W}}{e_i}\\
 =&Ric\left(\mathbf{W},\mathbf{W}\right)+\sum_{i=1}^n\mathbf{W}\hin{\nabla_{e_i}\mathbf{W}}{e_i}+\sum_{i=1}^n\hin{\nabla_{\nabla_{e_i}\mathbf{W}}\mathbf{W}}{e_i}\\
 =&\sum_{i=1}^n\hin{\nabla_{\nabla_{e_i}\mathbf{W}}\mathbf{W}}{e_i}+\hin{\nabla\Div\mathbf{W}}{\mathbf{W}}+Ric\left(\mathbf{W},\mathbf{W}\right).
\end{align*}
We obtain the desired Bochner formula \eqref{eq:bochner-1}.

Notice that
\begin{align*}
 \trace\mathbf{E}^2=&\sum_{i=1}^n\hin{\mathbf{E}\left(\mathbf{E}\left(e_i\right)\right)}{e_i}\\
 =&\sum_{i,j=1}^n\hin{\mathbf{E}\left(e_i\right)}{e_j}\hin{\mathbf{E}\left(e_j\right)}{e_i}\\
 =&\sum_{i,j=1}^n\left(\hin{\nabla_{e_i}\mathbf{W}}{e_j}-\dfrac{\Div\mathbf{W}}{n}\delta_{ij}\right)\left(\hin{\nabla_{e_j}\mathbf{W}}{e_i}-\dfrac{\Div\mathbf{W}}{n}\delta_{ji}\right)\\
 =&\sum_{i,j=1}^n\hin{\nabla_{e_i}\mathbf{W}}{e_j}\hin{\nabla_{e_j}\mathbf{W}}{e_i}-\dfrac1n\abs{\Div\mathbf{W}}^2.
\end{align*}
In other words,
\begin{align}\label{eq:E^2}
 \trace\mathbf{E}^2=\sum_{i=1}^n\hin{\nabla_{\nabla_{e_i}\mathbf{W}}\mathbf{W}}{e_i}-\dfrac1n\abs{\Div\mathbf{W}}^2.
\end{align}
By the definition of $\mathbf{E}\left(\mathbf{W}\right)$, we obtain from \eqref{eq:critical-1}, \eqref{eq:X}, \eqref{eq:bochner-1} and \eqref{eq:E^2}
\begin{align*}
 \Div\mathbf{E}\left(\mathbf{W}\right)=&\Div\left(\nabla_{\mathbf{W}}\mathbf{W}-\dfrac{\Div\mathbf{W}}{n}\mathbf{W}\right)\\
 =&\left(\sum_{i=1}^n\hin{\nabla_{\nabla_{e_i}\mathbf{W}}\mathbf{W}}{e_i}+\hin{\nabla\Div\mathbf{W}}{\mathbf{W}}+Ric\left(\mathbf{W},\mathbf{W}\right)\right)-\dfrac1n\left(\hin{\nabla\Div\mathbf{W}}{\mathbf{W}}+\abs{\Div\mathbf{W}}^2\right)\\
 =&\trace\mathbf{E}^2+Ric\left(\mathbf{W},\mathbf{W}\right)+\dfrac{n-1}{n}\hin{\nabla f}{\mathbf{W}}\\
 =&\trace\mathbf{E}^2+Ric\left(\mathbf{W},\mathbf{W}\right)+(n-1)w^{-1}\hin{\mathbf{E}\left(\mathbf{W}\right)}{\nabla w}.
\end{align*}
Thus
\begin{align*}
 \Div\left(w^{1-n}\mathbf{E}\left(\mathbf{W}\right)\right)=&w^{1-n}\Div\mathbf{E}\left(\mathbf{W}\right)+(1-n)w^{-n}\hin{\mathbf{E}\left(\mathbf{W}\right)}{\nabla w}\\
 =&w^{1-n}\left(\trace\mathbf{E}^2+Ric\left(\mathbf{W},\mathbf{W}\right)\right).
\end{align*}
We obtain the desired formula \eqref{eq:bochner}.

\end{proof}

\section{Integral inequalities}\label{sec:integral}

We begin with the following preliminary integral inequality which can be seen as a generalization of Karp's integral inequality for positive harmonic functions \cite{Kar82subharmonic}. Karp used his integral inequality to give some useful applications in geometric analysis.

We consider first the critical $p$-Laplace equation. Here and in the sequel, by $B_{R}=B_R(o)$ we shall mean a ball of radius $R$ and center $o\in M^n$.

\begin{theorem}\label{thm:basic-p}
Assume $1<p<n$ and $u\in W_{loc}^{1,p}\left(M^n\right)$ is a positive and weak solution to the critical $p$-Laplace equation \eqref{eq:critical-p} on a complete Riemannian manifold $M^n$ with dimension $n$ and nonnegative Ricci curvature. Let $F$ be any positive nondecreasing function satisfying
\begin{align*}
\int^{\infty}\dfrac{\dif s}{sF(s)}=\infty.
\end{align*}
 Denote by
\begin{align*}
w=\left(\dfrac{n-p}{p}\right)^{\frac{p-1}{p}}u^{-\frac{p}{n-p}}\quad\quad\mbox{and}\quad\quad f=\dfrac{n(p-1)}{p}w^{-1}\abs{\nabla w}^{p}+w^{-1}.
\end{align*}
If $f$ is not a constant function, then we have for each real number $\alpha<\frac{n(p-1)}{(n-1)p}$
\begin{align}\label{eq:K1}
\liminf_{R\to\infty}\dfrac{1}{R^2}\int_{B_{R}\setminus B_{R/2}}f^{-\alpha}w^{1-n}\abs{\nabla w}^{2p-2}=\infty,
\end{align}
and
\begin{align}\label{eq:K2}
\limsup_{R\to\infty}\dfrac{1}{R^2F(R)}\int_{B_{R}\setminus B_{R/2}}f^{-\alpha}w^{1-n}\abs{\nabla w}^{2p-2}=\infty.
\end{align}

\end{theorem}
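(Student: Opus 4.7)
The plan is to derive a Caccioppoli-type integral inequality from the Bochner formula of \autoref{lem:bochner} and the nonlinear Kato inequality \eqref{eq:Kato} of \autoref{lem:preliminary-1}, and then to execute a Karp-style iteration \cite{Kar82subharmonic} via radial logarithmic cutoffs.

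First, \autoref{lem:bochner} combined with \eqref{eq:Kato} and the nonnegativity of the Ricci curvature gives $\Div\bigl(w^{2-n}\abs{\nabla w}^{p-2}\mathbf{A}(\nabla f)\bigr) \geq (n-1)^{-1}\, w^{3-n}\abs{\nabla w}^{-2}\hin{\mathbf{A}(\nabla f)}{\nabla f}$. Multiplying by $f^{-\alpha}$ and absorbing the extra product-rule term $-\alpha f^{-\alpha-1}w^{2-n}\abs{\nabla w}^{p-2}\hin{\mathbf{A}(\nabla f)}{\nabla f}$ via the algebraic identity $fw=\tfrac{n(p-1)}{p}\abs{\nabla w}^{p}+1$, the hypothesis $\alpha<\tfrac{n(p-1)}{(n-1)p}$ produces a constant $c_{\alpha}>0$ with
\begin{equation*}
\Div\bigl(f^{-\alpha}w^{2-n}\abs{\nabla w}^{p-2}\mathbf{A}(\nabla f)\bigr) \geq c_{\alpha}\, f^{-\alpha}w^{3-n}\abs{\nabla w}^{-2}\hin{\mathbf{A}(\nabla f)}{\nabla f}.
\end{equation*}
Testing against $\eta^{2}$ for a non-negative Lipschitz cutoff and integrating by parts, the boundary contribution is controlled by the Cauchy-Schwarz inequality for the positive-definite form $\hin{\mathbf{A}(\cdot)}{\cdot}$ (with eigenvalues $p-1$ and $1$) followed by Young's inequality; absorbing the $\hin{\mathbf{A}(\nabla f)}{\nabla f}$ factor into the left-hand side yields
\begin{equation*}
\int\eta^{2}\,d\mu \leq C_{\alpha}\int\abs{\nabla\eta}^{2}\,d\nu,
\end{equation*}
where $d\mu:=f^{-\alpha}w^{3-n}\abs{\nabla w}^{-2}\hin{\mathbf{A}(\nabla f)}{\nabla f}\,dV$ and $d\nu:=f^{-\alpha}w^{1-n}\abs{\nabla w}^{2p-2}\,dV$. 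Non-constancy of $f$ gives $\mu(B_{R_{0}})>0$ for some $R_{0}$.

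For \eqref{eq:K2}, argue by contradiction. If $\nu(B_{R})-\nu(B_{R/2})\leq KR^{2}F(R)$ for all $R\geq R_{0}$, then a dyadic telescoping using monotonicity of $F$ upgrades this to $\nu(B_{R})\leq K'R^{2}F(R)$, which together with the hypothesis $\int^{\infty}ds/(sF(s))=\infty$ forces $\Psi(R):=\int_{R_{0}}^{R}s\,ds/\nu(B_{s})\nearrow+\infty$. Apply the Caccioppoli estimate to the Karp cutoff $\eta_{R}$, equal to $1$ on $B_{R_{0}}$, to $1-\Psi(r(x))/\Psi(R)$ on $R_{0}\leq r(x)\leq R$, and to $0$ outside $B_{R}$; a coarea-plus-integration-by-parts computation gives $\int\abs{\nabla\eta_{R}}^{2}\,d\nu\leq 2/\Psi(R)+O(1/\Psi(R)^{2})\to 0$, while $\int\eta_{R}^{2}\,d\mu\geq\mu(B_{R_{0}})>0$, yielding the contradiction. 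For \eqref{eq:K1}, plug the linear cutoff $\eta=1$ on $B_{R/2}$, $\eta=0$ off $B_{R}$, $\abs{\nabla\eta}\leq 2/R$, into the Caccioppoli estimate to get $[\nu(B_{R})-\nu(B_{R/2})]/R^{2}\geq\mu(B_{R/2})/(4C_{\alpha})$, so the $\liminf$ claim reduces to showing $\mu(M^{n})=+\infty$; in the remaining case $\mu(M^{n})<+\infty$, a variant of the same Karp-type argument with $F\equiv1$ (using annular log cutoffs based at a sequence of diverging radii) forces $\mu\equiv 0$, contradicting $\mu(B_{R_{0}})>0$.

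The most delicate step is the Karp coarea computation, which couples the precise monotonicity assumption on $F$ to the divergence of $\int^{\infty}ds/(sF(s))$ through an integration by parts that only closes up under this hypothesis. A secondary technical point, handled as in the cited references, is that weak solutions are only $C^{1,\beta}_{loc}$ and the pointwise identities of \autoref{sec:preliminaries} hold only on the open set $\{\nabla w\neq 0\}$; the integrations by parts are justified by inserting a cutoff around the critical set and passing to the limit.
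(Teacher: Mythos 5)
Your argument for the $\limsup$ conclusion \eqref{eq:K2} is correct and follows a genuinely different route from the paper. The paper works with a discrete dyadic recursion: setting $G_j=\int_{B_{R_j}}f^{-\alpha}w^{1-n}\trace\mathbf{E}^2\eta_{R_j}^2$ and $H_j=R_j^{-2}\int_{B_{R_j}\setminus B_{R_j/2}}f^{-\alpha}w^{1-n}\abs{\nabla w}^{2p-2}$, it derives $G_{j+1}^2\leq C_\alpha(G_{j+1}-G_j)H_{j+1}$, then telescopes $\sum 1/F(2^j)\leq C/G_0$ to contradict $\int^\infty ds/(sF(s))=\infty$. You instead package the same Cauchy--Schwarz step into the Caccioppoli inequality $\int\eta^2\,d\mu\leq C_\alpha\int\abs{\nabla\eta}^2\,d\nu$ and test it against the continuous Karp cutoff $\eta_R=1-\Psi(r)/\Psi(R)$, $\Psi(R)=\int_{R_0}^R s\,ds/\nu(B_s)$, deriving $\int\abs{\nabla\eta_R}^2\,d\nu\to 0$ via Stieltjes integration by parts. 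Both are Karp-style; yours is the classical continuous form, the paper's is its discrete dyadic avatar. They are essentially equivalent in power for \eqref{eq:K2}, and your telescoping step $\nu(B_R\setminus B_{R/2})\lesssim R^2 F(R)\,\forall R\Rightarrow\nu(B_R)\lesssim R^2F(R)$ using monotonicity of $F$ is correct.

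For the $\liminf$ conclusion \eqref{eq:K1}, however, there is a gap. After reducing to the case $\mu(M^n)<\infty$, you propose to ``run the same Karp-type argument with $F\equiv 1$,'' i.e.\ to use the cutoff built from $\Psi(R)=\int_{R_0}^R s\,ds/\nu(B_s)$. But that cutoff has $\int\abs{\nabla\eta_R}^2\,d\nu\to 0$ only when $\nu(B_R)\lesssim R^2$ for \emph{all} large $R$, and the hypothesis you are negating in \eqref{eq:K1} is a $\liminf$, so it furnishes $\nu(B_{R_k}\setminus B_{R_k/2})\leq CR_k^2$ only along a subsequence $R_k\to\infty$. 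The absorbed inequality $\int\eta^2\,d\mu\leq C_\alpha\int\abs{\nabla\eta}^2\,d\nu$ is too weak to exploit subsequence control. What closes the argument is the \emph{unabsorbed} Cauchy--Schwarz, which keeps the restriction to the support of $\nabla\eta$:
\begin{equation*}
\int\eta^2\,d\mu\leq C_\alpha\left(\int_{\{\nabla\eta\neq 0\}}\eta^2\,d\mu\right)^{1/2}\left(\int\abs{\nabla\eta}^2\,d\nu\right)^{1/2}.
\end{equation*}
Testing with a linear cutoff on $B_{R_k}$ gives
\begin{equation*}
\mu(B_{R_k/2})^2\leq C\cdot R_k^{-2}\,\nu\!\left(B_{R_k}\setminus B_{R_k/2}\right)\cdot\left(\mu(B_{R_k})-\mu(B_{R_k/2})\right),
\end{equation*}
and along the chosen subsequence (with $R_{k+1}\geq 2R_k$) the first factor is bounded and the third tends to $0$ since $\mu(M^n)<\infty$, forcing $\mu(M^n)=0$, hence $f$ constant, a contradiction. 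This is precisely the paper's dyadic recursion $G_{j+1}^2\leq C(G_{j+1}-G_j)H_{j+1}$; you should retain that refined form rather than absorbing. The minor regularity issues about testing across the critical set $\{\nabla w=0\}$ you flag are real but standard and are handled as you describe.
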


\begin{proof}

 It is well known that for some constant $\hat\alpha\in(0,1)$ (cf. \cite{Dib83local, Tol84regularity,Uhl77regularity,AntCirFar23interior})
 \begin{align*}
 u\in C^{\infty}_{loc}\left(M^n\setminus Z\right)\cap C^{1,\hat\alpha}_{loc}\left(M^n\right)
 \end{align*}
 and
 \begin{align*}
 \abs{\nabla u}^{p-2}\nabla u\in L^{2}_{loc}\left(M^n\right),\quad \abs{\nabla u}^{p-2}\nabla^2 u\in L^{2}_{loc}\left(M^n\right).
 \end{align*}
Here $Z=\set{\nabla u=0}$ is the critical set of the solution $u$ which has zero measure \cite[Proposition 1.6]{AntCirFar23interior}. Moreover, if $1<p\leq2$,
\begin{align*}
 u\in W_{loc}^{2,2}\left(M^n\right).
\end{align*}
Denote by
\begin{align*}
\mathbf{W}=\abs{\nabla w}^{p-2}\nabla w \quad\quad\mbox{and}\quad\quad \mathbf{E}=\nabla\mathbf{W}-\dfrac{\Div\mathbf{W}}{n}g.
\end{align*}
It follows from \eqref{eq:X}
\begin{align*}
w^{1-n}\mathbf{E}\left(\mathbf{W}\right)=\dfrac{1}{n(p-1)}w^{2-n}\abs{\nabla w}^{p-2}\mathbf{A}\left(\nabla f\right)\in L^1_{loc}\left(M^n\right).
\end{align*}

Direct computation yields that
\begin{align*}
\Delta_pw=f
\end{align*}
holds true pointwisely in $M^n\setminus Z$. In fact,
\begin{align*}
\Delta_pu^{-\frac{p}{n-p}}=&-\left(\dfrac{p}{n-p}\right)^{p-1}\Div\left(u^{-\frac{n(p-1)}{n-p}}\abs{\nabla u}^{p-2}\nabla u\right)\\
=&-\left(\dfrac{p}{n-p}\right)^{p-1}\left(-\dfrac{n(p-1)}{n-p}u^{-\frac{(n-1)p}{n-p}}\abs{\nabla u}^p+u^{-\frac{n(p-1)}{n-p}}\Delta_pu\right)\\
=&\dfrac{n(p-1)}{p}u^{\frac{p}{n-p}}\abs{\nabla u^{-\frac{p}{n-p}}}^{p}+\left(\dfrac{p}{n-p}\right)^{p-1}u^{\frac{p}{n-p}}
\end{align*}
which yields
\begin{align*}
\Delta_pw=\dfrac{n(p-1)}{p}w^{-1}\abs{\nabla w}^{p}+w^{-1}.
\end{align*}

Since the Ricci curvature of $M^n$ is nonnegative, it follows from \eqref{eq:bochner} that
\begin{align*}
\Div\left(w^{1-n}\mathbf{E}\left(\mathbf{W}\right)\right)\geq w^{1-n}\trace\mathbf{E}^2
\end{align*}
holds in the sense of distribution, i.e., for every nonnegative function $\phi\in C_0^{\infty}\left(M^n\right)$
\begin{align*}
\int w^{1-n}\trace\mathbf{E}^2\phi\leq-\int\hin{w^{1-n}\mathbf{E}\left(\mathbf{W}\right)}{\nabla\phi}.
\end{align*}
For each real number $\gamma\geq2$ and nonnegative function $\eta\in C_0^{\infty}\left(M^n\right)$, the function $f^{-\alpha}\eta^{\gamma}$ is a well defined test function. Particularly, we have
\begin{align}\label{eq:Karp1}
 \int f^{-\alpha}w^{1-n}\trace\mathbf{E}^2\eta^{\gamma}\leq \alpha\int f^{-\alpha-1}w^{1-n}\hin{\mathbf{E}\left(\mathbf{W}\right)}{\nabla f}\eta^{\gamma}-\gamma\int f^{-\alpha}w^{1-n}\hin{\mathbf{E}\left(\mathbf{W}\right)}{\nabla \eta}\eta^{\gamma-1}.
\end{align}

It follows from \eqref{eq:Kato} and \eqref{eq:X}
\begin{align}\label{eq:Karp2}
\hin{\mathbf{E}\left(\mathbf{W}\right)}{\nabla f}=&n(p-1)w^{-1}\abs{\nabla w}^{2-p}\hin{\mathbf{E}\left(\mathbf{W}\right)}{\mathbf{A}^{-1}\left(\mathbf{E}\left(\mathbf{W}\right)\right)}\notag\\
\leq&(n-1)w^{-1}\abs{\nabla w}^{p}\trace\mathbf{E}^2.
\end{align}
We also have from \eqref{eq:E'}
\begin{align}\label{eq:Karp3}
\abs{\mathbf{E}\left(\mathbf{W}\right)}\leq&\sqrt{\max\set{\dfrac{n-1}{n},\dfrac{1}{2(p-1)}}}\abs{\nabla w}^{p-1}\sqrt{\trace\mathbf{E}^2}.
\end{align}
Inserting \eqref{eq:Karp1} and \eqref{eq:Karp2} into \eqref{eq:Karp3}, we get
\begin{align}\label{eq:Karp11'}
\begin{split}
 \int f^{-\alpha}w^{1-n}\trace\mathbf{E}^2\eta^{\gamma}\leq& (n-1)\alpha^+\int f^{-\alpha-1}w^{-n}\abs{\nabla w}^p\trace\mathbf{E}^2\eta^{\gamma}+C\gamma\int f^{-\alpha}w^{1-n}\abs{\nabla w}^{p-1}\sqrt{\trace\mathbf{E}^2}\abs{\nabla\eta}\eta^{\gamma-1}\\
\leq&\dfrac{(n-1)p}{n(p-1)}\alpha^+\int f^{-\alpha}w^{1-n}\trace\mathbf{E}^2\eta^{\gamma}+C\gamma\int f^{-\alpha}w^{1-n}\abs{\nabla w}^{p-1}\sqrt{\trace\mathbf{E}^2}\abs{\nabla\eta}\eta^{\gamma-1}.
\end{split}
\end{align}
Here we used the fact
\begin{align*}
 f\geq \dfrac{n(p-1)}{p}w^{-1}\abs{\nabla w}^p.
\end{align*}
Since
$$\alpha<\dfrac{n(p-1)}{(n-1)p},$$
the above inequality \eqref{eq:Karp11'} implies
\begin{align*}
\int f^{-\alpha}w^{1-n}\trace\mathbf{E}^2\eta^{\gamma}\leq C_{\alpha}\gamma\int f^{-\alpha}w^{1-n}\abs{\nabla w}^{p-1}\sqrt{\trace\mathbf{E}^2}\abs{\nabla\eta}\eta^{\gamma-1}.
\end{align*}
Applying H\"older's inequality with the respective exponent pair $\left(2,2\right)$, we have
\begin{align}\label{eq:Karp4}
 \int f^{-\alpha}w^{1-n}\trace\mathbf{E}^2\eta^{\gamma}\leq C_{\alpha}\gamma\left(\int_{\set{\nabla\eta\neq0}} f^{-\alpha}w^{1-n}\trace\mathbf{E}^2\eta^{\gamma}\right)^{1/2}\left(\int f^{-\alpha}w^{1-n}\abs{\nabla w}^{2p-2}\abs{\nabla\eta}^2\eta^{\gamma-2}\right)^{1/2}.
\end{align}

For each positive number $R$, we choose $\eta=\eta_{R}\in C_0^{\infty}\left(B_{R}\right)$ satisfying
\begin{align*}
 \eta_{R}\vert_{B_{R/2}}=1,\quad 0\leq\eta_{R}\leq 1,\quad \abs{\nabla\eta_{R}}\leq\frac{4}{R}
\end{align*}
and conclude from \eqref{eq:Karp4}
\begin{align*}
 \int_{B_{R}} f^{-\alpha}w^{1-n}\trace\mathbf{E}^2\eta_{R}^{2}\leq \dfrac{C_{\alpha}}{R}\left(\int_{B_{R}\setminus B_{R/2}} f^{-\alpha}w^{1-n}\trace\mathbf{E}^2\eta_{R}^{2}\right)^{1/2}\left(\int_{B_{R}\setminus B_{R/2}} f^{-\alpha}w^{1-n}\abs{\nabla w}^{2p-2}\right)^{1/2}
\end{align*}
which implies that for every sequence of positive numbers $\set{R_j}$ with $R_{j+1}\geq 2R_j$,
\begin{align}\label{eq:Karp5}
 G_{j+1}^2\leq C_{\alpha}\left(G_{j+1}-G_j\right)H_{j+1}
\end{align}
where
\begin{align*}
 G_{j}=\int_{B_{R_{j}}}f^{-\alpha}w^{1-n}\trace\mathbf{E}^2\eta_{R_{j}}^{2},\quad H_{j}=R_{j}^{-2}\int_{B_{R_{j}}\setminus B_{R_{j}/2}}f^{-\alpha}w^{1-n}\abs{\nabla w}^{2p-2}.
\end{align*}
In fact, it suffices to observe
\begin{align*}
 \int_{B_{R_{j+1}}\setminus B_{R_{j+1}/2}}f^{-\alpha}w^{1-n}\trace\mathbf{E}^2\eta_{R_{j+1}}^{2}=&\int_{B_{R_{j+1}}}f^{-\alpha}w^{1-n}\trace\mathbf{E}^2\eta_{R_{j+1}}^{2}-\int_{B_{R_{j+1}/2}}f^{-\alpha}w^{1-n}\trace\mathbf{E}^2\\
 \leq&\int_{B_{R_{j+1}}}f^{-\alpha}w^{1-n}\trace\mathbf{E}^2\eta_{R_{j+1}}^{2}-\int_{B_{R_{j}}}f^{-\alpha}w^{1-n}\trace\mathbf{E}^2\\
 \leq&\int_{B_{R_{j+1}}}f^{-\alpha}w^{1-n}\trace\mathbf{E}^2\eta_{R_{j+1}}^{2}-\int_{B_{R_{j}}}f^{-\alpha}w^{1-n}\trace\mathbf{E}^2\eta_j^2
\end{align*}
since $R_{j+1}\geq 2R_{j}$.

We can prove the first inequality \eqref{eq:K1} as follows. If
\begin{align*}
 \liminf_{R\to\infty}R^{-2}\int_{B_{R}\setminus B_{R/2}}f^{-\alpha}w^{1-n}\abs{\nabla w}^{2p-2}<\infty,
\end{align*}
then there exists a subsequence of positive numbers $R_{j}$ such that $R_{j+1}\geq 2R_{j}$ and
\begin{align*}
 \int_{B_{R_j}\setminus B_{R_j/2}}f^{-\alpha}w^{1-n}\abs{\nabla w}^{2p-2}\leq CR_{j}^2.
\end{align*}
Thus $H_{j}$ is uniformly bounded and we obtain from \eqref{eq:Karp5}
\begin{align*}
 G_{j+1}^2\leq C\left(G_{j+1}-G_{j}\right)
\end{align*}
which implies that $G_j$ is also uniformly bounded and
\begin{align*}
 \sum_{j=1}^{\infty}G_{j}^2<\infty.
\end{align*}
Consequently,
\begin{align*}
 \int_Mf^{-\alpha}w^{1-n}\trace\mathbf{E}^2=\lim_{j\to\infty}G_j=0.
\end{align*}
We deduce that $\trace\mathbf{E}^2\equiv0$, implying that $f$ must be a constant function.

 Now we will prove the second inequality \eqref{eq:K2}.
 For each positive integer number $j$, we set $R_{j}=2^j$. On the one hand, if $f$ is not a constant function, then $\trace\mathbf{E}^2\neq0$ and we may assume
 \begin{align*}
 G_{j}\geq C^{-1},\quad\forall j\geq1.
 \end{align*}
 On the other hand, if
 \begin{align*}
 \limsup\dfrac{1}{R^2F(R)}\int_{B_{R}\setminus B_{R/2}}f^{-\alpha}w^{1-n}\abs{\nabla w}^{2p-2}<\infty,
 \end{align*}
 then we have
 \begin{align*}
 H_j\leq CF\left(2^j\right),\quad\forall j\geq1.
 \end{align*}
 Thus, it follows from \eqref{eq:Karp5}
 \begin{align*}
 0<G_{j-1}G_{j}\leq G_{j}^2\leq C \left(G_j-G_{j-1}\right)H_j\leq C \left(G_j-G_{j-1}\right)F\left(2^j\right)
 \end{align*}
 which implies
 \begin{align*}
 \dfrac{1}{F\left(2^j\right)}\leq C\left(\dfrac{1}{G_{j-1}}-\dfrac{1}{G_j}\right).
 \end{align*}
 Thus
 \begin{align*}
 \sum_{j=1}^{\infty}\dfrac{1}{F\left(2^j\right)}\leq\dfrac{C}{G_0}.
 \end{align*}
 Consequently, it follows from the Cauchy integral test
 \begin{align*}
 \int^{\infty}\dfrac{\dif s}{sF(s)}<\infty
 \end{align*}
 which is a contradiction.
\end{proof}

For the Liouville equation, employing a reasoning analogous to that presented in \autoref{thm:basic-p}, we deduce that

\begin{theorem}\label{thm:basic-n}
Assume $u\in W_{loc}^{1,n}\left(M^n\right)$ is a weak solution to the Liouville equation \eqref{eq:critical-n} on a complete Riemannian manifold $M^n$ with dimension $n$ and nonnegative Ricci curvature. Denote by
\begin{align*}
 w=e^{-u} \quad\quad\mbox{and}\quad\quad f=(n-1)w^{-1}\abs{\nabla w}^{n}+w^{-1}.
\end{align*}
Let $F$ be any positive nondecreasing function satisfying
\begin{align*}
 \int^{\infty}\dfrac{\dif s}{sF(s)}=\infty.
\end{align*}
If $f$ is not a constant function, then we have for each real number $\alpha<1$
\begin{align*}
\liminf_{R\to\infty}\dfrac{1}{R^2}\int_{B_{R}\setminus B_{R/2}}f^{-\alpha}w^{1-n}\abs{\nabla w}^{2n-2}=\infty,
\end{align*}
and
\begin{align*}
\limsup_{R\to\infty}\dfrac{1}{R^2F(R)}\int_{B_{R}\setminus B_{R/2}}f^{-\alpha}w^{1-n}\abs{\nabla w}^{2n-2}=\infty.
\end{align*}
\end{theorem}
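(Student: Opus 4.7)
The plan is to replicate, with $p$ replaced by $n$, the full Karp-type iteration scheme used to prove \autoref{thm:basic-p}. The only substantive preliminary is to identify the equation satisfied by $w=e^{-u}$: writing $u=-\ln w$ gives $\abs{\nabla u}^{n-2}\nabla u=-w^{1-n}\abs{\nabla w}^{n-2}\nabla w$, and expanding the divergence yields
$$\Delta_n u=(n-1)w^{-n}\abs{\nabla w}^n-w^{1-n}\Delta_n w.$$
Substituting $\Delta_n u=-e^{nu}=-w^{-n}$ gives $\Delta_n w=(n-1)w^{-1}\abs{\nabla w}^n+w^{-1}=f$, which is precisely the structural equation \eqref{eq:critical-1} with $p=n$. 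As in the $p$-Laplace case, the regularity theory of DiBenedetto, Tolksdorf, and Uhlenbeck guarantees $u\in C^\infty_{loc}(M^n\setminus Z)\cap C^{1,\hat\alpha}_{loc}(M^n)$ with $Z=\set{\nabla u=0}$ of zero measure, and the formula \eqref{eq:X} ensures $w^{1-n}\mathbf{E}(\mathbf{W})\in L^1_{loc}(M^n)$, so the forthcoming integrations by parts against smooth compactly supported test functions are legitimate.

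I would then apply the Bochner identity \eqref{eq:bochner} of \autoref{lem:bochner} with $p=n$: using $\operatorname{Ric}\geq 0$, this gives the distributional inequality $\Div(w^{1-n}\mathbf{E}(\mathbf{W}))\geq w^{1-n}\trace\mathbf{E}^2$. Testing against $f^{-\alpha}\eta^\gamma$ with $\gamma\geq 2$ and a nonnegative cutoff $\eta\in C_0^\infty(M^n)$ produces the exact analogue of \eqref{eq:Karp1}. The key absorption step relies on the sharp nonlinear Kato inequality \eqref{eq:Kato}, which combined with \eqref{eq:X} gives $\hin{\mathbf{E}(\mathbf{W})}{\nabla f}\leq(n-1)w^{-1}\abs{\nabla w}^n\trace\mathbf{E}^2$, together with the pointwise bound $\abs{\mathbf{E}(\mathbf{W})}\leq C\abs{\nabla w}^{n-1}\sqrt{\trace\mathbf{E}^2}$ from \eqref{eq:E'}. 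Since $f\geq(n-1)w^{-1}\abs{\nabla w}^n$ in the present case $p=n$, the first error term contributes a factor $\frac{(n-1)p}{n(p-1)}\alpha^+=\alpha^+$, and the hypothesis $\alpha<1$ is precisely what is needed to absorb it into the left-hand side.

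What remains is Cauchy--Schwarz on the boundary term and the cutoff choice: with $\eta=\eta_R\in C_0^\infty(B_R)$ equal to $1$ on $B_{R/2}$ and $\abs{\nabla\eta_R}\leq 4/R$, one recovers the Karp recursion $G_{j+1}^2\leq C_\alpha(G_{j+1}-G_j)H_{j+1}$ with
$$G_j=\int_{B_{R_j}}f^{-\alpha}w^{1-n}\trace\mathbf{E}^2\eta_{R_j}^2,\quad H_j=R_j^{-2}\int_{B_{R_j}\setminus B_{R_j/2}}f^{-\alpha}w^{1-n}\abs{\nabla w}^{2n-2}.$$
If the first $\liminf$ were finite, a subsequence with $R_{j+1}\geq 2R_j$ makes $H_j$ bounded, so $\sum_j G_j^2<\infty$, hence $\trace\mathbf{E}^2\equiv 0$ on $M^n$; by \eqref{eq:E} this forces $\mathbf{E}\equiv 0$, and then \eqref{eq:X} makes $f$ constant, a contradiction. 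If the second $\limsup$ were finite, taking $R_j=2^j$ gives $H_j\leq CF(2^j)$, while the nontriviality of $f$ gives $G_j\geq G_0>0$; the recursion then telescopes to $1/F(2^j)\leq C\left(1/G_{j-1}-1/G_j\right)$, whose summability contradicts $\int^\infty ds/(sF(s))=\infty$ via the Cauchy integral test. The sole delicate point is the sharp matching of the exponent threshold $\alpha<1$ with the structure constant coming from the Kato inequality when $p=n$; every other step is a routine transcription of the $p$-Laplace argument of \autoref{thm:basic-p}.
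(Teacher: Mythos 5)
Your proposal is correct and follows exactly the route the paper takes: the paper's own proof of this theorem consists of verifying $\Delta_n w=(n-1)w^{-1}\abs{\nabla w}^n+w^{-1}=f$ and then declaring the rest "similar to \autoref{thm:basic-p}". You have carried out that transcription faithfully, and in particular correctly identified that the absorption threshold $\alpha<\frac{n(p-1)}{(n-1)p}$ from \autoref{thm:basic-p} specializes at $p=n$ to $\alpha<1$, via the pointwise bound $f\geq(n-1)w^{-1}\abs{\nabla w}^n$ and the sharp Kato inequality \eqref{eq:Kato}.
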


\begin{proof}
We compute
\begin{align*}
\Delta_nw=&-\Div\left(e^{-(n-1)u}\abs{\nabla u}^{n-2}\nabla u\right)\\
=&(n-1)e^{-(n-1)u}\abs{\nabla u}^n-e^{-(n-1)u}\Delta_nu\\
=&(n-1)w^{-1}\abs{\nabla w}^n+w^{-1}.
\end{align*}
Thus
\begin{align*}
\Delta_nw=(n-1)w^{-1}\abs{\nabla w}^n+w^{-1}.
\end{align*}

The rest of the proof is similar to \autoref{thm:basic-p}.
\end{proof}

To address the higher-dimensional case, the following is required:

\begin{lem}\label{thm:basic-p2}
 Assume $1<p\leq\frac{n^2}{3n-2}$ and $u\in W_{loc}^{1,p}\left(M^n\right)$ is a positive and weak solution to the critical $p$-Laplace equation \eqref{eq:critical-p} on a complete Riemannian manifold $M^n$ with dimension $n$ and nonnegative Ricci curvature. Denote by
\begin{align*}
w=\left(\dfrac{n-p}{p}\right)^{\frac{p-1}{p}}u^{-\frac{p}{n-p}} \quad\quad\mbox{and}\quad\quad f=\dfrac{n(p-1)}{p}w^{-1}\abs{\nabla w}^{p}+w^{-1}.
\end{align*}
We have for every real numbers
$$\alpha<\frac{n(p-1)}{(n-1)p},\quad\quad \mu<\frac{n(p-1)}{p}\quad \mbox{and}\quad \delta\in(0,1]$$
\begin{align}\label{eq:Integral-I}
\int_{B_{R/2}} f^{2-\frac2p-\alpha}w^{3-n-\frac2p}\leq C_{\alpha,\mu,\delta}R^{-\frac{2}{\delta}}\int_{B_{R}} f^{n-1-\alpha-\mu-\frac{\mu+3-n}{\delta}}w^{-\mu+\frac{n-1-\mu}{\delta}},\quad\forall R>0.
\end{align}
\end{lem}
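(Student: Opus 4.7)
My plan is to derive an integration-by-parts identity for the equation $\Delta_p w = f$, test it against a weight tailored to the Lemma's left-hand side, and then iterate with Hölder interpolation to introduce the $(\mu,\delta)$-parameters.

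\textbf{Step 1 (basic identity).} I would test the weak formulation of $\Delta_p w = f$ against $\phi = w^{a} f^{b}\eta^{\rho}$ with $a = 3-n-\tfrac{2}{p}$ and $b = 1-\tfrac{2}{p}-\alpha$, so that the resulting integral $\int f^{b+1}w^{a}\eta^{\rho}$ is exactly the Lemma's left-hand side $T := \int f^{2-2/p-\alpha}w^{3-n-2/p}\eta^{\rho}$. Using the identity $|\nabla w|^{p}=\tfrac{p}{n(p-1)}(fw-1)$ to eliminate $|\nabla w|^{p}$, the integration by parts produces
\[
c_{1}\,T \;=\; c_{2}\int f^{1-2/p-\alpha}w^{2-n-2/p}\eta^{\rho} \;+\; \mathcal{G} \;+\; \mathcal{B},
\]
where $c_{1}=\tfrac{n+2-3p}{n(p-1)}$ and $c_{2}=\tfrac{(n-3)p+2}{n(p-1)}$. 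Both are strictly positive thanks to $p\le \tfrac{n^{2}}{3n-2}\le \tfrac{n+2}{3}$; this is the place where the hypothesis $1<p\le n^{2}/(3n-2)$ enters decisively.

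\textbf{Step 2 (controlling the gradient and boundary terms).} The gradient term $\mathcal{G}$ contains $\int w^{a}f^{b-1}|\nabla w|^{p-2}\langle\nabla w,\nabla f\rangle\eta^{\rho}$. I would split via Cauchy--Schwarz as
\[ w^{a}f^{b-1}|\nabla w|^{p-1}|\nabla f| = \bigl[f^{-\alpha/2}w^{(3-n)/2}|\nabla f||\nabla w|^{-1}\bigr]\cdot \bigl[f^{b-1+\alpha/2}w^{a-(3-n)/2}|\nabla w|^{p}\bigr], \]
bound the first factor by the Bochner--Kato Caccioppoli inequality (already derived in the proof of \autoref{thm:basic-p}, using the sharp Kato inequality \eqref{eq:Kato}) which gives $\int f^{-\alpha}w^{3-n}|\nabla w|^{-2}\langle\mathbf{A}(\nabla f),\nabla f\rangle\eta^{\rho}\le CR^{-2}\int f^{-\alpha}w^{1-n}|\nabla w|^{2p-2}\le CR^{-2}T$, and bound the second factor using $|\nabla w|^{2p}\le C(fw)^{2}$, getting $\int f^{2-4/p-\alpha}w^{5-n-4/p}\eta^{\rho}=:\mathcal{S}$. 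The boundary term $\mathcal{B}$ is treated analogously and produces the same auxiliary integral $\mathcal{S}$. Putting these together, $|\mathcal{G}|+|\mathcal{B}|\le CR^{-1}T^{1/2}\mathcal{S}^{1/2}$. Young's inequality then absorbs a $\tfrac{c_{1}}{2}T$ into the left, yielding the ``one-step'' Caccioppoli
\[
T \;\le\; K\!\int_{B_{R}} f^{1-2/p-\alpha}w^{2-n-2/p} \;+\; CR^{-2}\!\int_{B_{R}} f^{2-4/p-\alpha}w^{5-n-4/p}.
\]

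\textbf{Step 3 (iteration and interpolation).} To reach the statement of the Lemma for an arbitrary $\delta\in(0,1]$ and $\mu<n(p-1)/p$, I would iterate this basic inequality: the first term on the right has $a' = a-1$, $b'=b-1$, and all the sign requirements on the coefficients remain intact under $p\le n^{2}/(3n-2)$, so the integration by parts can be repeated. Each iteration trades a factor of $R^{-2}$ for a shift in the $f,w$ exponents. The auxiliary integral $\mathcal{S}$, which has the ``$f$-exponent minus $w$-exponent'' value $n-3-\alpha$, differs from $T$'s value $n-1-\alpha$ by exactly $-2$, which matches the difference induced by $\delta=1$ in the target integrand $f^{n-1-\alpha-\mu-(\mu+3-n)/\delta}w^{-\mu+(n-1-\mu)/\delta}$. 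I would then apply Hölder with exponents $(1-\delta,\delta)$ between $T$ and $\int f^{n-1-\alpha-\mu-(\mu+3-n)/\delta}w^{-\mu+(n-1-\mu)/\delta}$, which is the natural interpolation controlling the iterated remainder, and subsequently invoke Young's inequality to absorb the $T^{1-\delta}$ factor and convert the accumulated $R^{-2k}$'s into $R^{-2/\delta}$. The freedom in choosing $\mu$ comes from the freedom in how the intermediate $w$-weights are distributed across iterations.

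\textbf{Main obstacle.} The hard part is the bookkeeping in Step 3: ensuring that the iteration closes up for \emph{every} admissible pair $(\mu,\delta)$, verifying that the coefficients arising from repeated integrations by parts (each involving a ratio like $c_{2}(a^{(k)})/c_{1}(a^{(k)})$) stay uniformly bounded over the iteration, and checking that the Hölder/Young combination produces exactly the exponents $n-1-\alpha-\mu-(\mu+3-n)/\delta$ and $-\mu+(n-1-\mu)/\delta$. The hypothesis $p\le n^{2}/(3n-2)$ is critical because it guarantees the sign conditions needed for the IBP identity at every iteration step.
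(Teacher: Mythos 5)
Your overall strategy (integration by parts against a weighted test function, then Hölder interpolation) is in the spirit of the paper's argument, and you correctly identify where $p\le n^{2}/(3n-2)$ enters as a sign condition. However, there is a genuine gap in the way you plan to introduce the parameter $\mu$, and, as a consequence, the interpolation in Step~3 cannot produce the stated exponents.

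The crux is your choice of test function. You take $\phi=w^{a}f^{b}\eta^{\rho}$ with $a=3-n-\tfrac{2}{p}$ and $b=1-\tfrac{2}{p}-\alpha$, which are forced by insisting that the leading IBP term coincide \emph{exactly} with the Lemma's left-hand side. This makes the $w$-exponent rigid: one iteration of your IBP only shifts $a\mapsto a-\ell$ by integers, so the iterated remainders all carry $w$-weights of the form $w^{3-n-2/p-\ell}$, and there is no mechanism to generate the one-parameter family of weights $w^{-\mu+(n-1-\mu)/\delta}$ required by the statement. Your claim that ``the freedom in choosing $\mu$ comes from the freedom in how the intermediate $w$-weights are distributed across iterations'' is not substantiated and, as far as I can tell, does not correspond to a valid construction. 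Concretely, if you perform one IBP step and then interpolate between $T$ and a third quantity with Hölder exponents $(1-\delta,\delta)$, the $f$-exponent of that third quantity is forced (by the equality of exponents in Hölder) to be $2-\tfrac{2}{p}-\alpha-\tfrac{2/p}{\delta}$, which is \emph{not} equal to $n-1-\alpha-\mu-\tfrac{\mu+3-n}{\delta}$ for generic $\mu$.

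What the paper does instead is test against $f^{\beta}w^{-\mu}\eta^{\gamma}$ where $\mu$ is the desired free parameter and $\beta=n-2-\alpha-\mu$ is determined by it; the hypothesis $p\le n^{2}/(3n-2)$ then guarantees both $\beta\ge 0$ (needed when absorbing the $\mu$-term via $f\ge\tfrac{n(p-1)}{p}w^{-1}|\nabla w|^{p}$) and $3-n-\tfrac{2}{p}+\mu<0$, which, together with $fw\ge 1$, gives the domination $f^{2-2/p-\alpha}w^{3-n-2/p}\le f^{1+\beta}w^{-\mu}$. A single IBP (not an iteration) produces $\int f^{\alpha+2\beta}w^{n-1-2\mu}\eta^{\gamma-2}$ on the right, and a single Hölder interpolation $\int f^{\alpha+2\beta}w^{n-1-2\mu}\eta^{\gamma-2}\le\left(\int f^{1+\beta}w^{-\mu}\eta^{\gamma}\right)^{1-\delta}\left(\int f^{1+\beta-\frac{1-\alpha-\beta}{\delta}}w^{-\mu+\frac{n-1-\mu}{\delta}}\eta^{\gamma-2/\delta}\right)^{\delta}$ both closes the loop and delivers precisely the exponents in \eqref{eq:Integral-I}, because $1+\beta-\tfrac{1-\alpha-\beta}{\delta}=n-1-\alpha-\mu-\tfrac{\mu+3-n}{\delta}$. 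So the missing idea is not an iteration count, but the coupling $\beta=n-2-\alpha-\mu$ built into the test function and the comparison $f^{2-2/p-\alpha}w^{3-n-2/p}\le f^{1+\beta}w^{-\mu}$.
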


\begin{proof}
Use the same notation as in the proof of \autoref{thm:basic-p}, we know that
\begin{align}\label{eq:Integral-1}
\int f^{-\alpha}w^{1-n}\trace\mathbf{E}^2\eta^{\gamma}\leq &C_{\alpha}\gamma^2\int f^{-\alpha}w^{1-n}\abs{\nabla w}^{2p-2}\abs{\nabla \eta}^{2}\eta^{\gamma-2}.
\end{align}
Notice that
\begin{align*}
\abs{\nabla w}^p\leq Cfw.
\end{align*}
We obtain from \eqref{eq:Integral-1}
\begin{align}\label{eq:Integral-11}
\int f^{-\alpha}w^{1-n}\trace\mathbf{E}^2\eta^{\gamma}\leq &C_{\alpha}\gamma^2\int f^{2-\frac2p-\alpha}w^{3-\frac2p-n}\abs{\nabla \eta}^{2}\eta^{\gamma-2}.
\end{align}

Now we assume
$$1<p\leq\dfrac{n^2}{3n-2}.$$
Notice that this case occurs only when $n\geq3$ and
$$1<p<\dfrac{n+1}{3}.$$

We compute from \eqref{eq:critical-p} for every constants $\beta$ and $\mu$,
\begin{align*}
\int f^{1+\beta}w^{-\mu}\eta^{\gamma}=&\int\left(\Delta_pw\right) f^{\beta}w^{-\mu}\eta^{\gamma}\\
=&-\beta\int f^{\beta-1}\abs{\nabla w}^{p-2}\hin{\nabla w}{\nabla f}w^{-\mu}\eta^{\gamma}+\mu\int f^{\beta}w^{-\mu-1}\abs{\nabla w}^p\eta^{\gamma}-\gamma\int f^{\beta}w^{-\mu}\eta^{\gamma-1}\abs{\nabla w}^{p-2}\hin{\nabla w}{\nabla\eta}\\
=&-n\beta\int f^{\beta-1}\hin{\nabla w}{\mathbf{E}\left(\mathbf{W}\right)}w^{-\mu-1}\eta^{\gamma}+\mu\int f^{\beta}w^{-\mu-1}\abs{\nabla w}^p\eta^{\gamma}-\gamma\int f^{\beta}w^{-\mu}\eta^{\gamma-1}\abs{\nabla w}^{p-2}\hin{\nabla w}{\nabla\eta}\\
\leq&C\abs{\beta}\int f^{\beta-1}\abs{\nabla w}^p\sqrt{\trace\mathbf{E}^2}w^{-\mu-1}\eta^{\gamma}+\mu\int f^{\beta}w^{-\mu-1}\abs{\nabla w}^p\eta^{\gamma}+\gamma\int f^{\beta}w^{-\mu}\eta^{\gamma-1}\abs{\nabla w}^{p-1}\abs{\nabla\eta}\\
\leq&C\abs{\beta}\int f^{\beta}\sqrt{\trace\mathbf{E}^2}w^{-\mu}\eta^{\gamma}+\dfrac{p\mu^+}{n(p-1)}\int f^{1+\beta}w^{-\mu}\eta^{\gamma}+\gamma\int f^{\beta+1-\frac1p}w^{1-\frac1p-\mu}\eta^{\gamma-1}\abs{\nabla\eta}.
\end{align*}
Here we have used \eqref{eq:X} and \eqref{eq:E'}. Using H\"older's inequality with the respective exponent pair $\left(2,2\right)$, it follows that
\begin{align*}
\left(1- \dfrac{p\mu^+}{n(p-1)}\right)\int f^{1+\beta}w^{-\mu}\eta^{\gamma}\leq&C\abs{\beta}\int f^{\beta}\sqrt{\trace\mathbf{E}^2}w^{-\mu}\eta^{\gamma}+\gamma\int f^{\beta+1-\frac1p}w^{1-\frac1p-\mu}\eta^{\gamma-1}\abs{\nabla\eta}\\
\leq&C\abs{\beta}\left(\int f^{-\alpha}w^{1-n}\trace\mathbf{E}^2\eta^{\gamma+2}\right)^{1/2}\left(\int f^{\alpha+2\beta}w^{n-1-2\mu}\eta^{\gamma-2}\right)^{1/2}\\
&+\gamma\left(\int f^{2-\frac2p-\alpha}w^{3-n-\frac2p}\abs{\nabla\eta}^2\eta^{\gamma}\right)^{1/2}\left(\int f^{\alpha+2\beta}w^{n-1-2\mu}\eta^{\gamma-2}\right)^{1/2}.
\end{align*}
Inserting the above inequality into \eqref{eq:Integral-11}, we get
\begin{align*}
\left(1- \dfrac{p\mu^+}{n(p-1)}\right)\int f^{1+\beta}w^{-\mu}\eta^{\gamma}\leq&C_{\alpha,\beta}\gamma\left(\int f^{2-\frac2p-\alpha}w^{3-n-\frac2p}\abs{\nabla\eta}^2\eta^{\gamma}\right)^{1/2}\left(\int f^{\alpha+2\beta}w^{n-1-2\mu}\eta^{\gamma-2}\right)^{1/2}.
\end{align*}
Thus, for
$$\alpha<\dfrac{n(p-1)}{(n-1)p}\quad\quad \mbox{and} \quad\quad \mu<\dfrac{n(p-1)}{p}$$
we have
\begin{align}\label{eq:integral-2}
\int f^{1+\beta}w^{-\mu}\eta^{\gamma}\leq&C_{\alpha,\beta,\mu}\gamma\left(\int f^{2-\frac2p-\alpha}w^{3-n-\frac2p}\abs{\nabla\eta}^2\eta^{\gamma}\right)^{1/2}\left(\int f^{\alpha+2\beta}w^{n-1-2\mu}\eta^{\gamma-2}\right)^{1/2}.
\end{align}

We choose
\begin{align*}
\beta=n-2-\alpha-\mu>n-2-\dfrac{n(p-1)}{(n-1)p}-\dfrac{n(p-1)}{p}=\dfrac{n^2-(3n-2)p}{(n-1)p}\geq 0,
\end{align*}
and obtain
\begin{align*}
3-n-\dfrac{2}{p}+\mu<3-n-\dfrac{2}{p}+\dfrac{n(p-1)}{p}=\dfrac{3p-n-2}{p}<0
\end{align*}
which implies
\begin{align*}
f^{2-\frac2p-\alpha}w^{3-n-\frac2p}=\left(fw\right)^{3-n-\frac2p+\mu} f^{1+\beta}w^{-\mu}\leq f^{1+\beta}w^{-\mu}.
\end{align*}
Here we used the fact
\begin{align*}
 f\geq w^{-1}.
\end{align*}
We choose $\eta\in C_0^{\infty}\left(B_{R}\right)$ satisfying
\begin{align*}
0\leq\eta\leq1,\quad\quad\eta\vert_{B_{R/2}}=1,\quad\quad\abs{\nabla\eta}\leq\dfrac{4}{R}.
\end{align*}
It follows from \eqref{eq:integral-2}
\begin{align}\label{eq:integral-3}
\int f^{1+\beta}w^{-\mu}\eta^{\gamma}\leq&C_{\alpha,\mu}\gamma^2R^{-2}\int f^{\alpha+2\beta}w^{n-1-2\mu}\eta^{\gamma-2}.
\end{align}

For every $\delta\in(0,1]$ and $\gamma>\frac{2}{\delta}$, applying H\"older's inequality, we obtain
\begin{align}\label{eq:integral-4}
\int f^{\alpha+2\beta}w^{n-1-2\mu}\eta^{\gamma-2}\leq\left(\int f^{1+\beta}w^{-\mu}\eta^{\gamma}\right)^{1-\delta}\left(\int f^{1+\beta-\frac{1-\alpha-\beta}{\delta}}w^{-\mu+\frac{n-1-\mu}{\delta}}\eta^{\gamma-\frac{2}{\delta}}\right)^{\delta}.
\end{align}
We conclude from \eqref{eq:integral-3} and \eqref{eq:integral-4} that
\begin{align*}
\int f^{1+\beta}w^{-\mu}\eta^{\gamma}\leq&C_{\alpha,\mu,\delta}\gamma^{\frac{2}{\delta}}R^{-\frac{2}{\delta}}\int f^{1+\beta-\frac{1-\alpha-\beta}{\delta}}w^{-\mu+\frac{n-1-\mu}{\delta}}\eta^{\gamma-\frac{2}{\delta}}.
\end{align*}
Thus
\begin{align*}
 \int f^{2-\frac2p-\alpha}w^{3-n-\frac2p}\eta^{\gamma}\leq&C_{\alpha,\mu,\delta}\gamma^{\frac{2}{\delta}}R^{-\frac{2}{\delta}}\int f^{1+\beta-\frac{1-\alpha-\beta}{\delta}}w^{-\mu+\frac{n-1-\mu}{\delta}}\eta^{\gamma-\frac{2}{\delta}}.
\end{align*}
Consequently, we obtain the desired integral inequality \eqref{eq:Integral-I}.
\end{proof}

We need the following
\begin{lem}\label{lem:crucial-p}
Assume $1<p<n$ and $u\in W_{loc}^{1,p}\left(M^n\right)$ is a positive and weak solution to the critical $p$-Laplace equation \eqref{eq:critical-p} on a complete Riemannian manifold $M^n$ with dimension $n$ and nonnegative Ricci curvature. Denote by
\begin{align*}
w=\left(\dfrac{n-p}{p}\right)^{\frac{p-1}{p}}u^{-\frac{p}{n-p}}\quad\quad\mbox{and}\quad\quad f=\dfrac{n(p-1)}{p}w^{-1}\abs{\nabla w}^{p}+w^{-1}.
\end{align*}
Then for every constants $\theta$ and $a$ satisfying
\begin{align*}
0<\theta\leq1\quad\mbox{and}\quad (p-1)\theta\leq a<\dfrac{(n+1)p-n}{p}-\theta,
\end{align*}
we have
\begin{align*}
\int_{B_R}f^{\theta}w^{-a}\leq C_{a,\theta}R^{n-a-\theta},\quad\forall R>0.
\end{align*}
Moreover, for every
$$0\leq a\leq \frac{(n+1)p-n}{p},$$
there holds
\begin{align*}
\int_{B_R}w^{-a}\leq C_{a}R^{n-a},\quad\forall R>0.
\end{align*}
\end{lem}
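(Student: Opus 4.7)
My approach is to use the equation $\Delta_p w = f$ as an integration-by-parts engine: power-of-$w$ test functions produce a Caccioppoli-type identity that I can bootstrap in $a$ starting from the volume bound $\mathrm{Vol}(B_R)\le CR^n$ supplied by Bishop--Gromov under $\mathrm{Ric}\ge 0$. The first inequality will then follow from the second via a H\"older decomposition of $f^\theta$ guided by the concavity bound $(A+B)^\theta\le A^\theta+B^\theta$ valid for $\theta\in(0,1]$.

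The proof of the second inequality begins by testing the weak form of $\Delta_p w=f$ against $\phi=w^{1-a}\eta^\gamma$, with $\eta$ a standard cutoff of $B_R$ and $\gamma$ large. Using the algebraic identity $fw^{1-a}=\tfrac{n(p-1)}{p}w^{-a}|\nabla w|^p+w^{-a}$, one obtains
\[
\int w^{-a}\eta^\gamma + \Bigl[\tfrac{(n+1)p-n}{p}-a\Bigr]\int w^{-a}|\nabla w|^p\eta^\gamma = -\gamma\int w^{1-a}|\nabla w|^{p-2}\hin{\nabla w}{\nabla\eta}\eta^{\gamma-1}.
\]
For $a<\tfrac{(n+1)p-n}{p}$ the bracket is positive, and a Young inequality absorbs the right-hand side into the gradient term, yielding $\int_{B_R}w^{-a}\le C_a R^{-p}\int_{B_R\setminus B_{R/2}}w^{p-a}$. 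Starting from $a=0$ (volume comparison) and iterating in increments of $p$, one obtains $\int_{B_R}w^{-kp}\le CR^{n-kp}$ for every admissible integer $k\ge0$; H\"older interpolation between consecutive multiples of $p$ then produces $\int_{B_R}w^{-a}\le C_a R^{n-a}$ for every $a\in[0,\tfrac{(n+1)p-n}{p})$. For the first inequality, one bounds $f^\theta\le C_\theta(w^{-\theta}|\nabla w|^{p\theta}+w^{-\theta})$; the $w^{-\theta}$ contribution reduces immediately to the second part, and for the gradient piece I would use H\"older with conjugate pair $(1/\theta,1/(1-\theta))$:
\[
\int w^{-(a+\theta)}|\nabla w|^{p\theta}\le\Bigl(\int w^{-\alpha}|\nabla w|^p\Bigr)^\theta\Bigl(\int w^{-\beta}\Bigr)^{1-\theta},\qquad \theta\alpha+(1-\theta)\beta=a+\theta.
\]
The Caccioppoli identity combined with the second part applied at exponent $\alpha-p$ bounds the first factor by $CR^{n-\alpha}$ whenever $\alpha\in[p,\tfrac{(n+1)p-n}{p})$, while the second is $\le CR^{n-\beta}$ for $\beta\in[0,\tfrac{(n+1)p-n}{p})$. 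The hypothesis $(p-1)\theta\le a<\tfrac{(n+1)p-n}{p}-\theta$ is exactly what guarantees such an admissible pair $(\alpha,\beta)$ exists across the whole range, and inserting the two bounds yields $CR^{n-a-\theta}$.

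The main obstacle I anticipate is the closed endpoint $a=\tfrac{(n+1)p-n}{p}$ in the second inequality, where the absorbing coefficient in the Caccioppoli identity vanishes identically. Plain H\"older interpolation from strictly interior exponents is circular at this level (the natural Young split again sends us to $\int w^{-a}$), so I expect one must run the Caccioppoli identity as an equality and estimate the boundary term by a carefully tuned Young inequality that feeds back into interior bounds at exponents strictly less than $\tfrac{(n+1)p-n}{p}$, which have already been produced by the bootstrap.
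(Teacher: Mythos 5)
Your core strategy — integration by parts through $\Delta_p w = f$ to get a Caccioppoli identity, bootstrapping from the Bishop--Gromov volume bound, and a H\"older split of $f^\theta$ — is exactly the engine the paper uses, and your iteration-plus-interpolation scheme for the second inequality on $[0,q_0)$ with $q_0 = \frac{(n+1)p-n}{p}$ does work (one application of the reduction step, then interpolation between multiples of $p$, covers the top of the range since $q_0 - p < \lfloor q_0/p\rfloor\, p$). The paper's handling of $p\le q<q_0$ is a single self-improvement: after Caccioppoli it applies H\"older with exponents $(q/(q-p),\,q/p)$ to the right side and absorbs, giving $\int w^{-q}\eta^\gamma\le C_q\gamma^q\int|\nabla\eta|^q\eta^{\gamma-q}$ directly without iteration — a touch shorter but morally the same as yours. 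For the first inequality, you decompose $f^\theta\le C_\theta(w^{-\theta}|\nabla w|^{p\theta}+w^{-\theta})$ first and then H\"older the gradient piece against a pair $(\alpha,\beta)$; the paper instead picks one exponent $q$ and writes $\int f^\theta w^{-a}\le(\int fw^{1-q})^\theta(\int w^{-(a-(q-1)\theta)/(1-\theta)})^{1-\theta}$. Your route has the genuine advantage that by choosing $\alpha\in[p,q_0)$ and $\beta\in[0,q_0)$ (both strictly interior, which the strict inequality $a<q_0-\theta$ guarantees is possible) you never need the endpoint $a=q_0$ of the second inequality; the paper's choice of $q$ does sometimes push $\frac{a-(q-1)\theta}{1-\theta}$ to the closed endpoint.

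The unresolved piece is exactly the one you flag: the closed endpoint $a=q_0$ in the second inequality. Your speculation is correct in every particular, but it is still a speculation rather than a proof, and the lemma statement includes $a=q_0$. The paper's realization of your idea is: the Caccioppoli identity at $q=q_0$ has vanishing absorbing coefficient, so it degenerates to
\begin{align*}
\int_{B_{R/2}}w^{-q_0}\leq \dfrac{C}{R}\int_{B_R}\abs{\nabla w}^{p-1}w^{1-q_0};
\end{align*}
one then H\"olders the integrand with exponents $\left(\frac{p}{p-1},p\right)$ after an $\epsilon$-shift,
\begin{align*}
\int_{B_R}\abs{\nabla w}^{p-1}w^{1-q_0}\leq\left(\int_{B_R}\abs{\nabla w}^{p}w^{\epsilon-q_0}\right)^{\frac{p-1}{p}}\left(\int_{B_R}w^{p-\epsilon(p-1)-q_0}\right)^{\frac1p},
\end{align*}
runs the gradient factor through the Caccioppoli estimate at the strictly interior exponent $q_0-\epsilon$, and bounds the zeroth-order factor by the interior case at $q_0-p+(p-1)\epsilon$. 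Choosing $0<\epsilon<\min\set{\frac{p}{p-1},\,q_0-p}$ keeps both exponents strictly below $q_0$, and the powers of $R$ combine to exactly $R^{n-q_0}$. So while nothing in your outline would fail, you should supply this two-H\"older $\epsilon$-argument explicitly — as written, the endpoint case is a named gap.
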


\begin{rem}
The proof of the above inequalities for critical $p$-Laplace equation in the Euclidean space $\mathbb{R}^n$ are standard, which can be found in \cite[Lemma 2.1]{Vet24note}. The main idea of the proof can be found in \cite[Lemma 2.4]{SerZou02cauchy} or \cite[Lemma 5.1]{CatMonRon23on} or \cite[Lemma 3.1]{Ou22on}. We include the proof for completeness and also for the convenience of the reader.
\end{rem}

\begin{proof}[Proof of \autoref{lem:crucial-p}]
Without loss of generality, we may assume $M^n$ is noncompact.
\vspace{2ex}

{\bf We consider first the case $\theta=1$.} That is, we prove that: for every constant $q$ satisfying
$$ q<\dfrac{(n+1)p-n}{p}$$
there holds
\begin{align}\label{eq:theta=1}
\int_{B_{R/2}}fw^{1-q}\leq&C_{q}R^{-p}\int_{B_{R}} w^{p-q},\quad\forall R>0.
\end{align}

To demonstrate this, we initially observe in the proof of \autoref{thm:basic-p} that
\begin{align*}
 \Delta_pw=f=\dfrac{n(p-1)}{p}w^{-1}\abs{\nabla w}^{p}+w^{-1}.
\end{align*}
Using the integration by parts, we obtain for each positive number $\gamma\geq p$ and nonnegative function $\eta\in C_0^{\infty}\left(M^n\right)$,
\begin{align*}
 &\dfrac{n(p-1)}{p}\int \abs{\nabla w}^{p}w^{-q}\eta^{\gamma}+\int w^{-q}\eta^{\gamma}\\
 =&\int fw^{1-q}\eta^{\gamma}\\
 =&\int\left(\Delta_pw\right)w^{1-q}\eta^{\gamma}\\
=&(q-1)\int\abs{\nabla w}^pw^{-q}\eta^{\gamma}-\gamma\int\abs{\nabla w}^{p-2}w^{1-q}\hin{\nabla w}{\nabla\eta}\eta^{\gamma-1}.
\end{align*}
Thus
\begin{align}\label{eq:theta=11}
 \int w^{-q}\eta^{\gamma}+\left(\dfrac{(n+1)p-n}{p}-q\right)\int\abs{\nabla w}^pw^{-q}\eta^{\gamma}\leq\gamma\int\abs{\nabla w}^{p-1}w^{1-q}\abs{\nabla\eta}\eta^{\gamma-1}.
\end{align}
The H\"older inequality with the respective exponent pair $\left(p,\frac{p}{p-1}\right)$ yields
\begin{align}\label{eq:theta=12}
 \int\abs{\nabla w}^{p-1}w^{1-q}\abs{\nabla\eta}\eta^{\gamma-1}\leq\left(\int\abs{\nabla w}^pw^{-q}\eta^{\gamma}\right)^{\frac{p-1}{p}}\left(\int w^{p-q}\abs{\nabla\eta}^p\eta^{\gamma-p}\right)^{\frac{1}{p}}.
\end{align}
We obtain from \eqref{eq:theta=11} and \eqref{eq:theta=12}
\begin{align*}
 \int w^{-q}\eta^{\gamma}+\left(\dfrac{(n+1)p-n}{p}-q\right)\int\abs{\nabla w}^pw^{-q}\eta^{\gamma}\leq&\gamma\left(\int\abs{\nabla w}^pw^{-q}\eta^{\gamma}\right)^{\frac{p-1}{p}}\left(\int w^{p-q}\abs{\nabla\eta}^p\eta^{\gamma-p}\right)^{\frac{1}{p}},
\end{align*}
which implies
\begin{align}\label{eq:theta=13}
\int\abs{\nabla w}^pw^{-q}\eta^{\gamma}\leq&C_q\gamma^p\int w^{p-q}\abs{\nabla\eta}^p\eta^{\gamma-p}
\end{align}
and
\begin{align}\label{eq:theta=14}
\int w^{-q}\eta^{\gamma}\leq C_q\gamma^p\int w^{p-q}\abs{\nabla\eta}^p\eta^{\gamma-p}.
\end{align}
Hence, it follows from \eqref{eq:theta=13} and \eqref{eq:theta=14} that for every $q<\frac{(n+1)p-n}{p}$ and $\gamma\geq p$
\begin{align*}
 \int fw^{1-q}\eta^{\gamma}\leq C_q\gamma^p\int w^{p-q}\abs{\nabla\eta}^p\eta^{\gamma-p}.
\end{align*}
Consequently, we obtain the desired integral inequality \eqref{eq:theta=1}.
\vspace{2ex}

{\bf We consider second the case $\theta=0$.} That is, we prove that: if $$0\leq q\leq\dfrac{(n+1)p-n}{p},$$
then
\begin{align}\label{eq:theta=0}
 \int_{B_{R}}w^{-q}\leq C_{q} R^{n-q},\quad\forall R>0.
\end{align}
By H\"older's inequality, it suffices to consider the case
$$p\leq q\leq \dfrac{(n+1)p-n}{p}.$$

Firstly, we consider the case
$$p\leq q<q_0\coloneqq\dfrac{(n+1)p-n}{p}.$$

Applying H\"older's inequality, for $q\geq p$ and $\gamma\geq q$ we have
\begin{align*}
\int w^{p-q}\abs{\nabla\eta}^p\eta^{\gamma-p}\leq\left(\int w^{-q}\eta^{\gamma}\right)^{1-\frac{p}{q}}\left(\int\abs{\nabla\eta}^{q}\eta^{\gamma-q}\right)^{\frac{p}{q}}.
\end{align*}
Consequently, it follows from \eqref{eq:theta=14} and the above inequality that for every $p\leq q<\frac{(n+1)p-n}{p}$ and $\gamma\geq q$
\begin{align*}
\int w^{-q}\eta^{\gamma}\leq C_q\gamma^p\left(\int w^{-q}\eta^{\gamma}\right)^{1-\frac{p}{q}}\left(\int \abs{\nabla\eta}^{q}\eta^{\gamma-q}\right)^{\frac{p}{q}}
\end{align*}
which implies
\begin{align*}
\int w^{-q}\eta^{\gamma}\leq C_q\gamma^{q}\int \abs{\nabla\eta}^{ q}\eta^{\gamma-q}.
\end{align*}
We conclude from the Bishop-Gromov volume comparison theorem that for every $q$ satisfying
$$p\leq q<\dfrac{(n+1)p-n}{p},$$
there holds true
\begin{align*}
\int_{B_{R}} w^{-q}\leq C_{q} R^{n-q}.
\end{align*}

Secondly, we consider the case
$$q=q_0\coloneqq \dfrac{(n+1)p-n}{p}.$$

It follows from \eqref{eq:theta=11}
\begin{align*}
\int w^{-q_0}\eta^{\gamma}\leq\gamma\int\abs{\nabla w}^{p-1}w^{1-q_0}\abs{\nabla\eta}\eta^{\gamma-1}
\end{align*}
which implies for each $\epsilon>0$
\begin{align*}
\int_{B_{R/2}}w^{-q_0}\leq& CR^{-1}\int_{B_R}\abs{\nabla w}^{p-1}w^{1-q_0}\\
\leq&CR^{-1}\left(\int_{B_R}\abs{\nabla w}^{p}w^{\epsilon-q_0}\right)^{\frac{p-1}{p}}\left(\int_{B_{R}}w^{p-\epsilon(p-1)-q_0}\right)^{\frac{1}{p}}\\
\leq&C_{\epsilon}R^{-1}\left(R^{-p}\int_{B_{2R}}w^{p+\epsilon-q_0}\right)^{\frac{p-1}{p}}\left(\int_{B_{R}}w^{p-\epsilon(p-1)-q_0}\right)^{\frac{1}{p}}.
\end{align*}
We choose $\epsilon$ fulfilling
\begin{align*}
0<\epsilon<\min\set{\dfrac{p}{p-1},\,q_0-p}=\min\set{\dfrac{p}{p-1},\, \dfrac{(n-p)(p-1)}{p}}
\end{align*}
to obtain
\begin{align*}
\int_{B_{R/2}}w^{-q_0}\leq C_{\epsilon}R^{-1}\left(R^{-p}R^{n+(p+\epsilon-q_0)}\right)^{\frac{p-1}{p}}
\left(R^{n+(p-\epsilon(p-1)-q_0)}\right)^{\frac{1}{p}}=C_{\epsilon}R^{n-q_0}.
\end{align*}
Here we used the Bishop-Gromov volume comparison theorem. Consequently, we obtain the desired integral inequality \eqref{eq:theta=0}.
\vspace{2ex}

{\bf We consider finally the case $0<\theta<1$.}

Denote by
\begin{align*}
q=\max\set{p,\, \dfrac{a}{\theta}+1-\dfrac{(n+1)p-n}{p}\left(\dfrac{1}{\theta}-1\right)}.
\end{align*}
We claim that
\begin{align*}
p\leq q<\dfrac{(n+1)p-n}{p}\quad\quad\mbox{and}\quad\quad 0\leq\dfrac{a-(q-1)\theta}{1-\theta}\leq\dfrac{(n+1)p-n}{p}.
\end{align*}
In fact, if $(p-1)\theta=a$, then
\begin{align*}
 p=q<\dfrac{(n+1)p-n}{p}\quad\quad\mbox{and}\quad\quad 0=\dfrac{a-(q-1)\theta}{1-\theta}<\dfrac{(n+1)p-n}{p}.
\end{align*}
If $(p-1)\theta<a<\frac{(n+1)p-n}{p}-\theta$, then
\begin{align*}
 p<\dfrac{a}{\theta}+1\quad\quad\mbox{and}\quad\quad \dfrac{a}{\theta}+1-\dfrac{(n+1)p-n}{p}\left(\dfrac{1}{\theta}-1\right)<\dfrac{(n+1)p-n}{p}.
\end{align*}
Thus
\begin{align*}
q=\max\set{p,\, \dfrac{a}{\theta}+1-\dfrac{(n+1)p-n}{p}\left(\dfrac{1}{\theta}-1\right)}<\min\set{\dfrac{(n+1)p-n}{p},\, \dfrac{a}{\theta}+1},
\end{align*}
which implies
\begin{align*}
p\leq q<\dfrac{(n+1)p-n}{p} \quad\quad\mbox{and}\quad\quad 0<\dfrac{a-(q-1)\theta}{1-\theta}\leq \dfrac{(n+1)p-n}{p}.
\end{align*}
Combing \eqref{eq:theta=1} and \eqref{eq:theta=0}, by using the Bishop-Gromov volume comparison theorem, H\"older's inequality gives
\begin{align*}
\int_{B_{R}}f^{\theta}w^{-a}\leq&\left(\int_{B_{R}}w^{1-q}f\right)^{\theta}\left(\int_{B_R}w^{-\frac{a-(q-1)\theta}{1-\theta}}\right)^{1-\theta}\\
\leq&\left(C_{q} R^{-p}\int_{B_{2R}}w^{p-q}\right)^{\theta}\left(\int_{B_R}w^{-\frac{a-(q-1)\theta}{1-\theta}}\right)^{1-\theta}\\
\leq&C_{a,\theta}R^{n-\theta p+\theta(p-q)-(a-(q-1)\theta)}\\
=&C_{a,\theta}R^{n-a-\theta}.
\end{align*}
We complete the proof.
\end{proof}

We also need the following
\begin{lem}\label{lem:crucial-p1}
Let $M^n$ be a complete Riemannnian manifold with dimension $n$ and nonnegative Ricci curvature. If $1<p<n$ and $u$ is a positive and weak solution to \eqref{eq:critical-p} on $M^n$, then for every constants $\theta$ and $a$ satisfying
\begin{align*}
0<\theta<1 \quad\quad\mbox{and}\quad\quad a<(p-1)\theta,
\end{align*}
we have
\begin{align*}
\int_{B_R}f^{\theta}w^{-a}\leq C_{a,\theta}R^{n-\frac{p}{p-1}a},\quad\forall R>1.
\end{align*}
Here
\begin{align*}
w=\left(\dfrac{n-p}{p}\right)^{\frac{p-1}{p}}u^{-\frac{p}{n-p}}\quad\quad\mbox{and}\quad\quad f=\dfrac{n(p-1)}{p}w^{-1}\abs{\nabla w}^{p}+w^{-1}.
\end{align*}
\end{lem}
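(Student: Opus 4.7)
The plan is to derive the bound from two inputs already established in Lemma \ref{lem:crucial-p}: the Caccioppoli--type identity
\begin{align*}
\int_{B_{R/2}} fw^{1-q} \leq C_q R^{-p}\int_{B_R} w^{p-q},\qquad q<\tfrac{(n+1)p-n}{p},
\end{align*}
and the bound $\int_{B_R} w^{-a'}\leq CR^{n-a'}$ for $a'\in[0,\tfrac{(n+1)p-n}{p}]$. On top of this, I will use H\"older's inequality, tuned so as to take advantage of the slack $\tfrac{p}{p-1}a<\theta+a$ available precisely when $a<(p-1)\theta$.

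First, factor the integrand as $f^{\theta}w^{-a}=(fw^{1-q})^{\theta}\cdot w^{-a-\theta(1-q)}$, and apply H\"older with conjugate exponents $\bigl(1/\theta,1/(1-\theta)\bigr)$ to obtain
\begin{align*}
\int_{B_R} f^{\theta}w^{-a}\leq \Bigl(\int_{B_R} fw^{1-q}\Bigr)^{\theta}\Bigl(\int_{B_R} w^{-(a+\theta(1-q))/(1-\theta)}\Bigr)^{1-\theta}.
\end{align*}
With $q\in[p,\tfrac{(n+1)p-n}{p})$ and $(a+\theta(1-q))/(1-\theta)\in[0,\tfrac{(n+1)p-n}{p}]$, both factors are controlled by the two inputs above, giving the exponent $n-\theta-a$, exactly as in the previous lemma. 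The obstruction in our regime is that these two admissibility conditions force $\theta q+(1-\theta)\cdot(a+\theta(1-q))/(1-\theta)=\theta+a\geq p\theta$, i.e.\ $a\geq(p-1)\theta$, which is the opposite of our hypothesis.

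The key observation is that, for $a<(p-1)\theta$ and $R>1$, the target exponent satisfies
\begin{align*}
n-\tfrac{p}{p-1}a-(n-\theta-a)=\theta-\tfrac{a}{p-1}>0,
\end{align*}
so the lemma asks for a \emph{weaker} bound than the interpolation above produces on the boundary $a=(p-1)\theta$. I will exploit this slack by taking $q$ slightly below $p$, which violates the admissibility of the $\int w^{p-q}$ factor (since $p-q>0$ is now a positive power of $w$). The additional growth in $R$ produced by bounding the positive-power integral $\int_{B_R} w^{p-q}$ is then matched against the surplus $\theta-\tfrac{a}{p-1}$ in the target exponent; balancing the two yields precisely the exponent $n-\tfrac{p}{p-1}a$.

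The main obstacle is the control of $\int_{B_R}w^{p-q}$ when $p-q>0$. A priori, no such bound follows from the previous lemma. To close the argument I plan to combine the Caccioppoli identity iteratively with the Bishop--Gromov volume comparison, or equivalently to invoke the polynomial-growth consequence of the Cheng--Yau type gradient estimate of \autoref{sec:gradient-estimate}, which controls $w^{p-q}\leq C\, r(x)^{\kappa(p-q)}$ on $B_R$ for a universal $\kappa$. The restriction $R>1$ in the hypothesis is exactly what allows this polynomial excess to be absorbed into the constant $C_{a,\theta}$ in the weaker target bound $R^{n-\frac{p}{p-1}a}$. The delicate point is the matching of the free parameter $q<p$ to the pair $(\theta,a)$ so that the Hölder exponents remain within the admissible ranges and the resulting exponent of $R$ collapses exactly to $n-\tfrac{p}{p-1}a$.
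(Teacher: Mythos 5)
You have correctly identified that the missing ingredient is a pointwise polynomial bound on $w$ along the lines of $w(x)\leq C\,r(x)^{\kappa}$ for $r(x)\geq 1$, and that such a bound closes the estimate (indeed with $\kappa=\frac{p}{p-1}$). The gap is in how you propose to obtain it. The Cheng--Yau gradient estimate of \autoref{sec:gradient-estimate} reads $\sup_{B_{R/4}}|\nabla\ln u|\leq C\bigl(R^{-1}+\sup_{B_R}u^{p/(n-p)}\bigr)$. Under the hypotheses of this lemma nothing is assumed about the size or decay of $u$ (neither global boundedness nor $u\lesssim r^{-d}$), so the right-hand side is not controlled and the estimate yields at best an exponential bound $w\leq Ce^{Cr}$ even if $u$ happens to be bounded, which is far from polynomial. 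In the paper the Cheng--Yau estimate is only used later, in \autoref{thm:decay}, where a decay hypothesis $u\leq Cr^{-d}$ is imposed; it is not the source of the polynomial bound here. Your alternative phrase ``combine the Caccioppoli identity iteratively with Bishop--Gromov'' is not an argument, and it is not clear how to make it one without already having a pointwise bound on $w$.

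The actual source of the needed bound is more elementary and does not involve Moser iteration at all: one shows $u(x)\geq\bigl(\min_{\partial B_1}u\bigr)\,r(x)^{-\frac{n-p}{p-1}}$ for $r(x)\geq 1$ by comparing $u$ against the radial $p$-harmonic barrier $\tilde u=r^{-(n-p)/(p-1)}$. The Laplacian comparison theorem under $\mathrm{Ric}\geq 0$ gives $\Delta r\leq\frac{n-1}{r}$ in the barrier sense, whence $\Delta_p\tilde u\geq 0$, and the weak comparison principle for $p$-superharmonic functions does the rest. This yields $w\leq Cr^{p/(p-1)}$ for $r\geq 1$. Once this is in hand, the entire H\"older machinery you set up (tuning $q$ below $p$ and balancing against $\int_{B_R}w^{p-q}$) is an unnecessary detour: the much shorter route is to write $w^{-a}=w^{(p-1)\theta-a}\,w^{-(p-1)\theta}\leq C\,R^{\frac{p}{p-1}\left((p-1)\theta-a\right)}\,w^{-(p-1)\theta}$ on $B_R$ for $R>1$ (the exponent $(p-1)\theta-a$ is positive by hypothesis), and then invoke \autoref{lem:crucial-p} with the boundary admissible value $a'=(p-1)\theta$ to get $\int_{B_R}f^{\theta}w^{-(p-1)\theta}\leq C_\theta R^{n-p\theta}$. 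The exponents combine to $n-\frac{p}{p-1}a$ as claimed.
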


\begin{proof}
Without loss of generality, we may assume $M^n$ is noncompact. We first claim that
\begin{align*}
 u(x)\geq \left(\min_{\partial B_1}u\right)r(x)^{-\frac{n-p}{p-1}},\quad\forall r(x)\geq1,
\end{align*}
where $r(x)=\mathrm{dist}(x,o)$ is the distance function from $x$ to the fixed point $o$. This is a consequence of the weak comparison principle for weak $p$-superharmonic function and the Laplacian comparison theorem on Riemannian manifolds with nonnegative Ricci curvature. To see this, we consider the function
\begin{align*}
\tilde u(x)=r(x)^{-\frac{n-p}{p-1}}.
\end{align*}
Since the Ricci curvature is nonnegative, the Laplacian comparison theorem implies
\begin{align*}
\Delta r\leq\dfrac{n-1}{r}
\end{align*}
holds in the barrier sense. Thus
\begin{align*}
\Delta_p\tilde u=&\Div\left(\abs{\nabla\tilde u}^{p-1}\nabla\tilde u\right)\\
=&-\left(\dfrac{n-p}{p-1}\right)^{p-1}\Div\left(r^{-(n-1)}\nabla r\right)\\
=&\left(\dfrac{n-p}{p-1}\right)^{p-1}\left((n-1)r^{-n}-r^{1-n}\Delta r\right)\\
\geq&0.
\end{align*}
Since $u$ is positive, we have for every positive number $\epsilon$,
\begin{align*}
u\geq \left(\min_{\partial B_1}u\right)r^{-\frac{n-p}{p-1}}-\epsilon,\quad\forall r>\left(\epsilon\left(\min_{\partial B_1}u\right)\right)^{\frac{p-1}{n-p}}.
\end{align*}
Consequently, the weak comparison principle for the $p$-Laplacian implies
\begin{align*}
u\geq\left(\min_{\partial B_1}u\right)r^{-\frac{n-p}{p-1}},\quad\forall r>1.
\end{align*}
Thus
\begin{align*}
w\leq Cr^{\frac{p}{p-1}}.
\end{align*}

According to \autoref{lem:crucial-p}, we obtain for every $R>1$
\begin{align*}
\int_{B_{R}}f^{\theta}w^{-a}\leq & C_{a,\theta}R^{\frac{p}{p-1}\left((p-1)\theta-a\right)}\int_{B_{R}}f^{\theta}w^{-(p-1)\theta}\\
\leq&C_{a,\theta}R^{\frac{p}{p-1}\left((p-1)\theta-a\right)+n-p\theta}\\
=&C_{a,\theta}R^{n-\frac{p}{p-1}a}.
\end{align*}
We complete the proof.
\end{proof}

\section{A Cheng-Yau type gradient estimate}\label{sec:gradient-estimate}

The aim of this section is to prove the following local Cheng-Yau type gradient estimate.
\begin{theorem}\label{thm:Cheng-Yau} Let $M^n$ be a Riemannian manifold of dimension $n$ and with nonnegative Ricci curvature. Let $u$ be a positive and weak solution to the critical $p$-Laplace equation \eqref{eq:critical-p} in $B_{2R}\subset M^n$. If $1<p<n$, then
\begin{align}\label{eq:local-gradient}
\sup_{B_{R/4}}\abs{\nabla\ln u}\leq C\left(\dfrac{1}{R}+\sup_{B_{R}}u^{\frac{p}{n-p}}\right).
\end{align}
Here the constant $C$ depends only on $n$ and $p$.
\end{theorem}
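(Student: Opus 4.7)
The plan is to run a Moser iteration on the $P$-function constructed in \autoref{sec:integral}. Introduce $w=\left(\tfrac{n-p}{p}\right)^{(p-1)/p}u^{-p/(n-p)}$, so that $\Delta_{p}w=f$ with $f=\tfrac{n(p-1)}{p}w^{-1}\abs{\nabla w}^{p}+w^{-1}$. The identities $\abs{\nabla\ln u}=\tfrac{n-p}{p}\abs{\nabla w}/w$ and $u^{p/(n-p)}=\bigl(\tfrac{n-p}{p}\bigr)^{(p-1)/p}w^{-1}$ make the target \eqref{eq:local-gradient} equivalent to the pointwise estimate
\[
\sup_{B_{R/4}}\frac{\abs{\nabla w}^{p}}{w^{p}}\leq C\left(R^{-p}+\sup_{B_{R}}w^{-p}\right).
\]
Since $\tfrac{n(p-1)}{p}\abs{\nabla w}^{p}=fw-1$, it is enough to bound the quantity $\Phi\coloneqq fw$ pointwise on $B_{R/4}$ in terms of $R$, the scalar $\sup_{B_R}w^{-1}$, and the local size of $w$.

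The starting point is the Bochner--type divergence identity \eqref{eq:bochner}. Since $\mathrm{Ric}\geq 0$, combining it with the sharp nonlinear Kato inequality \eqref{eq:Kato} and the representation \eqref{eq:X} of $\mathbf{E}(\mathbf{W})$ yields, on $M^{n}\setminus Z$ with $Z=\set{\nabla u=0}$,
\[
\Div\bigl(w^{2-n}\abs{\nabla w}^{p-2}\mathbf{A}(\nabla f)\bigr)\geq\tfrac{1}{n-1}w^{3-n}\abs{\nabla w}^{-2}\hin{\mathbf{A}(\nabla f)}{\nabla f},
\]
which is precisely the inequality underlying \autoref{thm:basic-p}. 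I would test this distributional inequality against $\phi=\eta^{\gamma}f^{\beta}w^{-\mu}$, with a standard radial cutoff $\eta$ supported in a ball $B_{r'}\subset B_r$ satisfying $\abs{\nabla\eta}\leq C/(r-r')$, and exponents $\beta\geq 0$, $\mu,\gamma$ in an admissible range depending on $n$ and $p$. Integration by parts, together with \eqref{eq:E'} to control $\abs{\mathbf{E}(\mathbf{W})}$ by $\abs{\nabla w}^{p-1}\sqrt{\trace\mathbf{E}^{2}}$ and Young's inequality to absorb the cross terms, produces a Caccioppoli--type reverse inequality of the schematic form
\[
\int f^{\beta+\tau}w^{-\mu}\eta^{\gamma}\leq\frac{C\beta^{p}}{(r-r')^{p}}\int f^{\beta}w^{p-1-\mu}\eta^{\gamma-p},
\]
with $\tau>0$ depending on $n,p$.

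Feeding this into the local Sobolev inequality of Saloff--Coste on complete manifolds with $\mathrm{Ric}\geq 0$ (a consequence of Bishop--Gromov volume doubling and Buser's Poincar\'e inequality) upgrades it into a genuine self-improving relation of Moser type. Iterating along a dyadic sequence of radii between $B_{R/4}$ and $B_{R/2}$ with exponents $q_{k}=q_{0}\chi^{k}\to\infty$ (where $\chi=n/(n-p)$), the iteration constants combine into a convergent product and yield
\[
\sup_{B_{R/4}}\Phi\leq CR^{-n/q_{0}}\Bigl(\int_{B_{R/2}}\Phi^{q_{0}}w^{-a}\Bigr)^{1/q_{0}}
\]
for a suitable $a\geq 0$. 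The starting integral on the right is then controlled by \autoref{lem:crucial-p} together with the trivial lower bound $w\geq\inf_{B_R}w$, which converts powers of $w^{-1}$ into powers of $\sup_{B_R}w^{-1}$. Rearranging and unfolding $u^{p/(n-p)}\propto w^{-1}$ produces \eqref{eq:local-gradient}.

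The main obstacle is that the Bochner identity \eqref{eq:bochner} and the algebraic identities of \autoref{lem:preliminary-1} are only valid where $\nabla w\neq 0$, whereas the critical set $Z$ may have positive dimension in general. The standard remedy is to first establish the estimate for smooth solutions of the regularized equation $-\Div\bigl((\varepsilon+\abs{\nabla u}^{2})^{(p-2)/2}\nabla u\bigr)=u^{p_{S}-1}$, to verify that all constants are uniform in $\varepsilon\in(0,1]$, and to pass to the limit using the $C^{1,\hat{\alpha}}_{\mathrm{loc}}$ regularity of weak solutions to \eqref{eq:critical-p} already invoked in the proof of \autoref{thm:basic-p}. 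A secondary technical point is to keep the Moser constant growing only polynomially in $\beta$, so that $(C\beta^{p})^{1/\beta}$ remains bounded along the iteration; this requires choosing the exponents $\mu,\gamma$ in a narrow admissible range where all error terms produced by Young's inequality can indeed be absorbed, with the final $C$ depending only on $n$ and $p$.
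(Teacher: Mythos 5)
Your overall plan --- derive a pointwise differential inequality from the Bochner formula and the sharp Kato inequality, integrate it against cutoff test functions to get a Caccioppoli bound, and close with Moser iteration via the Saloff--Coste Sobolev inequality --- is the right idea and matches the paper's strategy in spirit. However, the quantity you choose to iterate is the wrong one, and this is a genuine gap rather than a technicality. You correctly identify the target as a bound on the scale-invariant ratio $\abs{\nabla w}^{p}/w^{p}$, which equals $\bigl(\tfrac{p}{n-p}\bigr)^{p}\abs{\nabla\ln u}^{p}$, but you then reduce to bounding $\Phi = fw = \tfrac{n(p-1)}{p}\abs{\nabla w}^{p}+1$ and that reduction does not hold. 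A pointwise bound on $\Phi$ controls $\abs{\nabla w}^{p}$, not $\abs{\nabla w}^{p}/w^{p}$. For an Aubin--Talenti bubble, $f$ is a positive constant, so $\Phi = fw \to \infty$ where $w$ blows up, while $\abs{\nabla\ln u}\to 0$ there; thus no unweighted sup bound on $\Phi$ over $B_{R/4}$ can yield \eqref{eq:local-gradient}. To restore scale invariance one would need to iterate on $f/w^{p-1}$, but this quantity does not satisfy the divergence inequality you set up --- only the $P$-function $f$ does. The paper avoids the rescaled variable $w$ altogether and runs the iteration directly on $\psi=\abs{\nabla\ln u}^{p}$: with $\phi=\ln u$, the pointwise inequality \eqref{eq:diff-ineq} for $\mathcal{L}_{p,\phi}\psi$ has a coercive term $\tfrac{(p-1)^{2}}{n-1}\psi^{2}$, quadratic in the quantity being estimated, and the inhomogeneity $H=u^{p^{2}/(n-p)}$ appears only to the power that ultimately produces $\sup_{B_{R}}u^{p/(n-p)}$ after iteration.

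A secondary issue is the form of your Caccioppoli estimate. The schematic relation $\int f^{\beta+\tau}w^{-\mu}\eta^{\gamma}\leq C\beta^{p}(r-r')^{-p}\int f^{\beta}w^{p-1-\mu}\eta^{\gamma-p}$ with $\tau>0$ has no gradient term on the left, so it cannot be fed into a Sobolev inequality to gain integrability; a power improvement in $f$ is not what drives Moser iteration. What one needs is a term of the form $\int\abs{\nabla(\eta\,\psi^{t})}^{2}$ on the left so that Sobolev upgrades $L^{2}$ to $L^{2n/(n-2)}$, and this is exactly what the paper's estimate \eqref{eq:moser-2} supplies, together with the coercive contribution $a_{0}\int\abs{\nabla\phi}^{2t+2}\eta^{2}$. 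Note also that the iteration exponent is $\chi=n/(n-2)$, coming from the $L^{2}$ Sobolev embedding, not $n/(n-p)$ as in your proposal.
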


We will use the Moser iteration to prove this theorem. The proof is obtained by following very closely the one for solutions to subcritical quasilinear elliptic PDEs proposed for \cite{HeSunWan24optimal}. For reader's convenience, we provide a proof here. We will divide the proof into several steps.

In this section, we establish the following notations:
\begin{align*}
&\phi = \ln u, \quad \psi = \abs{\nabla\phi}^p, \quad H = u^{p^2/(n-p)},\\
&\mathbf{\Phi} = \abs{\nabla\phi}^{p-2} \nabla\phi, \quad \mathbf{X} = \nabla\mathbf{\Phi} - \dfrac{\Div\mathbf{\Phi}}{n}g, \quad \mathbf{A}_{\phi} = \mathrm{Id} + (p-2)\abs{\nabla\phi}^{-2}\nabla\phi\otimes\nabla\phi.
\end{align*}
Let $Z = \set{\nabla\phi \neq 0}$ signify the set of critical points of $\phi$, and $\mathcal{L}_{p,\phi}$ represent the linearization of the $p$-Laplacian at $\phi$. Thus, for any function $\xi$, we have
\begin{align*}
 \mathcal{L}_{p,\phi}\xi = \Div\left(\abs{\nabla\phi}^{p-2}\mathbf{A}_{\phi}\left(\nabla\xi\right)\right).
\end{align*}

Firstly, we state a pointwise differential inequality as follows.
\begin{lem}
Let $M^n$ be a Riemannian manifold of dimension $n$ and with nonnegative Ricci curvature. Let $u$ be a positive and weak solution to the critical $p$-Laplace equation \eqref{eq:critical-p} in $B_{2R}\subset M^n$. If $1<p<n$, then the following pointwise inequality holds
\begin{align}\label{eq:diff-ineq}
\begin{split}
\dfrac1p\mathcal{L}_{p,\phi}\psi\geq& \dfrac{(p-1)^2}{n-1}\psi^2+\dfrac{1}{n-1}H^2-\dfrac{(n+1)(p-1)^2+n-1}{(n-1)(n-p)}H\psi\\
&-\abs{\dfrac{2}{(n-1)p}H-\dfrac{(n-3)p+2}{(n-1)p}\psi}\sqrt{p-1}\psi^{-1/p}\sqrt{\hin{\mathbf{A}_{\phi}\left(\nabla \psi\right)}{\nabla \psi} }+\dfrac{n(p-1)}{(n-1)p^2}\psi^{-2/p}\hin{\mathbf{A}_{\phi}\left(\nabla \psi\right)}{\nabla \psi}
\end{split}
\end{align}
provided $\nabla u\neq0$.
\end{lem}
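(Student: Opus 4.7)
The plan is to establish a nonlinear Bochner-type identity for $\psi=\abs{\nabla\phi}^p$, then apply the sharp Kato inequality \eqref{eq:Kato} together with a single Cauchy--Schwarz estimate on the cross term.

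First I would translate the critical equation for $u$ into an equation for $\phi=\ln u$. Since $\nabla u=u\nabla\phi$, a direct computation yields $\Delta_pu=u^{p-1}\bigl[(p-1)\psi+\Delta_p\phi\bigr]$, and using $p_S-p=p^2/(n-p)$ one obtains
\begin{align*}
\Delta_p\phi=-(p-1)\psi-H.
\end{align*}
Second, the argument proving \autoref{lem:bochner} goes through verbatim for any $C^3$ function $w$ with $\Delta_pw=f$ and produces
\begin{align*}
\Div\mathbf{X}(\mathbf{\Phi})=\trace\mathbf{X}^2+Ric(\mathbf{\Phi},\mathbf{\Phi})+\frac{n-1}{n}\hin{\nabla f}{\mathbf{\Phi}}.
\end{align*}
Using the identity $\nabla_{\mathbf{\Phi}}\mathbf{\Phi}=\frac{1}{p}\abs{\nabla\phi}^{p-2}\mathbf{A}_\phi(\nabla\psi)$ (compare \eqref{eq:WW}), I rewrite $\mathbf{X}(\mathbf{\Phi})=\frac{1}{p}\abs{\nabla\phi}^{p-2}\mathbf{A}_\phi(\nabla\psi)-\frac{\Delta_p\phi}{n}\mathbf{\Phi}$, take its divergence and combine with the Bochner identity to obtain
\begin{align*}
\frac{1}{p}\mathcal{L}_{p,\phi}\psi=\trace\mathbf{X}^2+Ric(\mathbf{\Phi},\mathbf{\Phi})+\hin{\nabla(\Delta_p\phi)}{\mathbf{\Phi}}+\frac{1}{n}(\Delta_p\phi)^2.
\end{align*}

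Third, I would substitute $\Delta_p\phi=-(p-1)\psi-H$ (noting $\nabla H=\frac{p^2}{n-p}H\nabla\phi$, hence $\hin{\nabla H}{\mathbf{\Phi}}=\frac{p^2}{n-p}H\psi$), discard $Ric(\mathbf{\Phi},\mathbf{\Phi})\geq 0$, and apply the sharp Kato inequality \eqref{eq:Kato} to bound $\trace\mathbf{X}^2$ from below by $\frac{n(p-1)}{n-1}\abs{\mathbf{\Phi}}^{-2}\hin{\mathbf{X}(\mathbf{\Phi})}{\mathbf{A}_\phi^{-1}\mathbf{X}(\mathbf{\Phi})}$. Expanding this quadratic form produces a diagonal piece $\frac{n(p-1)}{(n-1)p^2}\psi^{-2/p}\hin{\mathbf{A}_\phi(\nabla\psi)}{\nabla\psi}$, a quadratic piece proportional to $((p-1)\psi+H)^2$ and a cross term linear in $\hin{\nabla\psi}{\nabla\phi}$. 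Combining the quadratic pieces with $\frac{1}{n}(\Delta_p\phi)^2$ and $-\frac{p^2}{n-p}H\psi$, the identity $\frac{2(p-1)}{n-1}-\frac{p^2}{n-p}=-\frac{(n+1)(p-1)^2+(n-1)}{(n-1)(n-p)}$ delivers the first three terms of the target.

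Finally, for the remaining linear-in-$\hin{\nabla\psi}{\nabla\phi}$ expression, I use $\mathbf{A}_\phi(\nabla\phi)=(p-1)\nabla\phi$ and Cauchy--Schwarz for the positive-definite form $\mathbf{A}_\phi$ to obtain
\begin{align*}
\abs{\hin{\nabla\psi}{\nabla\phi}}\leq\frac{\psi^{1/p}}{\sqrt{p-1}}\sqrt{\hin{\mathbf{A}_\phi(\nabla\psi)}{\nabla\psi}},
\end{align*}
so that bounding the signed cross term by its absolute value reproduces exactly the absolute-value term of the target inequality. The main obstacle is the careful coefficient bookkeeping: checking the simplification of the $H\psi$ coefficient above, and verifying that the Cauchy--Schwarz step can be absorbed into the positive quadratic form extracted from Kato without eroding the diagonal coefficient $\frac{n(p-1)}{(n-1)p^2}$.
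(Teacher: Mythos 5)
Your proof is correct and follows essentially the same route as the paper: you derive $\Delta_p\phi=-(p-1)\psi-H$, set up the Bochner-type identity $\frac1p\mathcal{L}_{p,\phi}\psi=\trace\mathbf{X}^2+Ric(\mathbf{\Phi},\mathbf{\Phi})+\hin{\nabla\Delta_p\phi}{\mathbf{\Phi}}+\frac1n(\Delta_p\phi)^2$, discard the Ricci term, apply the sharp Kato inequality \eqref{eq:Kato} to $\trace\mathbf{X}^2$, and close with Cauchy--Schwarz for the bilinear form $\mathbf{A}_\phi$. Your coefficient identity $\frac{2(p-1)}{n-1}-\frac{p^2}{n-p}=-\frac{(n+1)(p-1)^2+(n-1)}{(n-1)(n-p)}$ and your bound $|\hin{\nabla\psi}{\nabla\phi}|\le\frac{\psi^{1/p}}{\sqrt{p-1}}\sqrt{\hin{\mathbf{A}_\phi(\nabla\psi)}{\nabla\psi}}$ both check out, and together they reproduce the statement exactly.
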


\begin{proof}
Since $u$ satisfies
\begin{align*}
-\Delta_pu=u^{p_S-1}.
\end{align*}
Then $u$ is smooth away from the critical set $Z$ of $u$. We compute in $B_{2R}\setminus Z$
\begin{align*}
\Delta_p\phi=&\Div\left(\abs{\nabla\phi}^{p-2}\nabla\phi\right)\\
=&\Div\left(u^{1-p}\abs{\nabla u}^{p-2}\nabla u\right)\\
=&(1-p)u^{-p}\abs{\nabla u}^p+u^{1-p}\Delta_pu
\end{align*}
which implies
\begin{align}\label{eq:critical-phi}
-\Delta_p\phi=(p-1)\psi+H,
\end{align}
provided $\nabla u\neq0$.

By the definition of the vector field $\mathbf{\Phi}$, we have
\begin{align*}
\Div\mathbf{\Phi}=\Delta_p\phi.
\end{align*}
Direct computation yields
\begin{align}\label{eq:bochner-phi}
\dfrac1p\mathcal{L}_{p,\phi}\psi=&\hin{\nabla_{\nabla_{e_i}\mathbf{\Phi}}\mathbf{\Phi}}{e_i}+\hin{\nabla\Delta_p\phi}{\mathbf{\Phi}}
+Ric\left(\mathbf{\Phi},\mathbf{\Phi}\right).
\end{align}
In fact, on the one hand, it follows from \eqref{eq:bochner-1}
\begin{align}\label{eq:bochner-phi1}
 \Div\nabla_{\mathbf{\Phi}}\mathbf{\Phi}=\sum_{i=1}^n\hin{\nabla_{\nabla_{e_i}\mathbf{\Phi}}\mathbf{\Phi}}{e_i}+\hin{\nabla\Div\mathbf{\Phi}}{\mathbf{\Phi}}+Ric\left(\mathbf{\Phi},\mathbf{\Phi}\right).
\end{align}
On the other hand, it follows from \eqref{eq:WW}
\begin{align}\label{eq:bochner-phi2}
 \nabla_{\mathbf{\Phi}}\mathbf{\Phi}=\dfrac{1}{p}\abs{\nabla\phi}^{p-2}\mathbf{A}_{\phi}\left(\nabla\abs{\nabla \phi}^p\right).
\end{align}
By the definition of $\mathcal{L}_p$, we have
\begin{align}\label{eq:bochner-phi3}
 \mathcal{L}_{p,\phi}\abs{\nabla\phi}^p=\Div\left(\abs{\nabla\phi}^{p-2}\mathbf{A}_{\phi}\left(\nabla\abs{\nabla\phi}^p\right)\right).
\end{align}
Combining with \eqref{eq:bochner-phi1}, \eqref{eq:bochner-phi2} and \eqref{eq:bochner-phi3}, we obtain \eqref{eq:bochner-phi}.

We have the following Kato inequality
\begin{align}\label{eq:Kato-phi}
\sum_{i=1}^n\hin{\nabla_{\nabla_{e_i}\mathbf{\Phi}}\mathbf{\Phi}}{e_i}\geq&\dfrac{1}{n-1}\abs{\Delta_p\phi}^2-\dfrac{2}{(n-1)p}\psi^{-1}\hin{\mathbf{A}_{\phi}\left(\nabla \psi\right)}{\mathbf{\Phi}}\Delta_p\phi+\dfrac{n(p-1)}{(n-1)p^2}\psi^{-2/p}\hin{\mathbf{A}_{\phi}\left(\nabla \psi\right)}{\nabla \psi}
\end{align}
holds in $B_{2R}\setminus Z$. To see this, on the one hand, it follows from the Kato inequality \eqref{eq:Kato}
\begin{align}\label{eq:Kato-phi1}
\trace\mathbf{X}^2\geq\dfrac{n(p-1)}{n-1}\dfrac{\hin{\mathbf{X\left(\Phi\right)}}{\mathbf{A}_{\phi}^{-1}\left(\mathbf{X\left(\Phi\right)}\right)}}{\abs{\mathbf{\Phi}}^2}.
\end{align}
One can check that
\begin{align*}
\mathbf{X\left(\Phi\right)}=&\nabla_{\mathbf{\Phi}}\mathbf{\Phi}-\dfrac{\Div\mathbf{\Phi}}{n}\mathbf{\Phi}\\
=&\dfrac{1}{p}\psi^{1-2/p}\mathbf{A}_{\phi}\left(\nabla\psi\right)-\dfrac{\Delta_p\phi}{n}\mathbf{\Phi}
\end{align*}
which implies
\begin{align}\label{eq:Kato-phi2}
\begin{split}
\hin{\mathbf{X\left(\Phi\right)}}{\mathbf{A}_{\phi}^{-1}\left(\mathbf{X\left(\Phi\right)}\right)}=&\hin{\dfrac{1}{p}\psi^{1-2/p}\mathbf{A}_{\phi}
\left(\nabla\psi\right)-\dfrac{\Delta_p\phi}{n}\mathbf{\Phi}}{\dfrac{1}{p}\psi^{1-2/p}\nabla\psi-\dfrac{\Delta_p\phi}{n(p-1)}\mathbf{\Phi}}\\
=&\dfrac{1}{(p-1)n^2}\abs{\Delta_p\phi}^2\abs{\mathbf{\Phi}}^2+\dfrac{1}{p^2}\psi^{-2/p}\hin{\mathbf{A}_{\phi}\left(\nabla\psi\right)}{\nabla\psi}
\abs{\mathbf{\Phi}}^2\\
&-\dfrac{2}{np(p-1)}\psi^{-1}\hin{\mathbf{A}_{\phi}\left(\nabla \psi\right)}{\mathbf{\Phi}}\Delta_p\phi\abs{\mathbf{\Phi}}^2.
\end{split}
\end{align}
On the other hand, notice that
\begin{align}\label{eq:Kato-phi3}
\sum_{i=1}^n\hin{\nabla_{\nabla_{e_i}\mathbf{\Phi}}\mathbf{\Phi}}{e_i}=\trace\mathbf{X}^2+\dfrac{1}{n}\abs{\Div\mathbf{\Phi}}^2.
\end{align}
We obtain from \eqref{eq:Kato-phi1}, \eqref{eq:Kato-phi2} and \eqref{eq:Kato-phi3}
\begin{align*}
\sum_{i=1}^n\hin{\nabla_{\nabla_{e_i}\mathbf{\Phi}}\mathbf{\Phi}}{e_i}\geq&\dfrac{n(p-1)}{n-1}\left(\dfrac{1}{(p-1)n^2}\abs{\Delta_p\phi}^2+\dfrac{1}{p^2}\psi^{-2/p}\hin{\mathbf{A}_{\phi}\left(\nabla\psi\right)}{\nabla\psi}-\dfrac{2}{np(p-1)}\psi^{-1}\hin{\mathbf{A}_{\phi}\left(\nabla \psi\right)}{\mathbf{\Phi}}\Delta_p\phi\right)\\
&+\dfrac1n\abs{\Delta_p\phi}^2
\end{align*}
which gives the desired Kato inequality \eqref{eq:Kato-phi}.

Since the Ricci curvature is nonnegative, it follows from \eqref{eq:critical-phi}, \eqref{eq:bochner-phi} and \eqref{eq:Kato-phi}
\begin{align*}
\dfrac1p\mathcal{L}_{p,\phi}\psi\geq&\dfrac{1}{n-1}\abs{\Delta_p\phi}^2-\dfrac{2}{(n-1)p}\psi^{-1}\hin{\mathbf{A}_{\phi}\left(\nabla \psi\right)}{\mathbf{\Phi}}\Delta_p\phi+\dfrac{n(p-1)}{(n-1)p^2}\psi^{-2/p}\hin{\mathbf{A}_{\phi}\left(\nabla \psi\right)}{\nabla \psi}+\hin{\nabla\Delta_p\phi}{\mathbf{\Phi}}\\
=&\dfrac{1}{n-1}\abs{(p-1)\psi+H}^2+\dfrac{2}{(n-1)p}\psi^{-1}\hin{\mathbf{A}_{\phi}\left(\nabla \psi\right)}{\mathbf{\Phi}}\left((p-1)\psi+H\right)+\dfrac{n(p-1)}{(n-1)p^2}\psi^{-2/p}\hin{\mathbf{A}_{\phi}\left(\nabla \psi\right)}{\nabla \psi}\\
&-\hin{\nabla\left((p-1)\psi+H\right)}{\mathbf{\Phi}}\\
=&\dfrac{(p-1)^2}{n-1}\psi^2+\dfrac{1}{n-1}H^2-\dfrac{(n+1)(p-1)^2+n-1}{(n-1)(n-p)}H\psi+\hin{\mathbf{A}_{\phi}\left(\nabla \psi\right)}{\left(\dfrac{2}{(n-1)p}H\psi^{-1}-\dfrac{(n-3)p+2}{(n-1)p}\right)\mathbf{\Phi}}\\
&+\dfrac{n(p-1)}{(n-1)p^2}\psi^{-2/p}\hin{\mathbf{A}_{\phi}\left(\nabla \psi\right)}{\nabla \psi}.
\end{align*}
The Cauchy-Schwarz inequality together with the above inequality then gives the desired differential inequality \eqref{eq:diff-ineq}.

\end{proof}

Secondly, we give a local $L^q$ gradient estimate as follows.

\begin{lem}
 Let $M^n$ be a Riemannian manifold of dimension $n$ and with nonnegative Ricci curvature. Let $u$ be a positive and weak solution to the critical $p$-Laplace equation \eqref{eq:critical-p} in $B_{2R}\subset M^n$. If $1<p<n$, then there exists a positive constant $q_1>2p$ such that for all $q\geq q_1$
\begin{align}\label{eq:moser-3}
\norm{\nabla\phi}_{L^{q}\left(B_{R/2}\right)}\leq C\norm{H^{1/p}}_{L^{q}\left(B_{R}\right)}+CqR^{-1}\mathrm{Vol}\left(B_{R}\right)^{\frac{1}{q}}.
\end{align}
\end{lem}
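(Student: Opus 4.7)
The plan is to derive \eqref{eq:moser-3} as a single Caccioppoli-type estimate by testing the pointwise differential inequality \eqref{eq:diff-ineq} against the non-negative function $\psi^s\eta^\gamma$, where $s=q/p-2$, $\gamma=p(s+2)=q$, and $\eta\in C_c^\infty(B_R)$ is a standard cutoff with $\eta\equiv 1$ on $B_{R/2}$, $0\le\eta\le 1$, and $|\nabla\eta|\le C/R$. I shall require $q\ge q_1>2p$ so that $s>0$; a standard approximation away from the critical set $Z=\{\nabla\phi=0\}$ (using the regularity of $u$ recalled in the proof of \autoref{thm:basic-p} and $|Z|=0$) legitimizes the test function and the integration by parts below.

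Multiplying \eqref{eq:diff-ineq} by $\psi^s\eta^\gamma$, integrating over $B_R$, and integrating by parts on the left using $\mathcal{L}_{p,\phi}\psi=\Div(\abs{\nabla\phi}^{p-2}\mathbf{A}_\phi\nabla\psi)$ produces, after collecting the positive terms $\int\psi^{s+2}\eta^\gamma$, $\int H^2\psi^s\eta^\gamma$, and $I:=\int\psi^{s-2/p}\hin{\mathbf{A}_\phi\nabla\psi}{\nabla\psi}\eta^\gamma$ on the left-hand side, an inequality of the form
\begin{align*}
&\dfrac{(p-1)^2}{n-1}\int\psi^{s+2}\eta^\gamma+\dfrac{1}{n-1}\int H^2\psi^s\eta^\gamma+\left(\dfrac{s}{p}+\dfrac{n(p-1)}{(n-1)p^2}\right)I\\
&\qquad\leq C\int H\psi^{s+1}\eta^\gamma+C\int(H+\psi)\psi^{s-1/p}\sqrt{\hin{\mathbf{A}_\phi\nabla\psi}{\nabla\psi}}\,\eta^\gamma+C\gamma\int\psi^{s+1-2/p}\abs{\nabla\psi}\abs{\nabla\eta}\eta^{\gamma-1}.
\end{align*}
Applying Young's inequality to each cross term, splitting for instance $H\psi^{s+1}=(H\psi^{s/2})(\psi^{s/2+1})$ and $\psi^{s+1-1/p}\sqrt{\hin{\cdot}{\cdot}}=(\psi^{s/2-1/p}\sqrt{\hin{\cdot}{\cdot}})(\psi^{s/2+1})$, one absorbs small multiples of $\int\psi^{s+2}\eta^\gamma$ and of $I$ into the left-hand side, arriving at
\begin{align*}
\int\psi^{s+2}\eta^\gamma+I\leq C\int H^2\psi^s\eta^\gamma+C\gamma^2\int\psi^{s+2-2/p}\abs{\nabla\eta}^2\eta^{\gamma-2}.
\end{align*}

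The subtle step is the cutoff contribution. Apply Young's inequality with conjugate exponents $r=(s+2)/(s+2-2/p)$ and $r'=p(s+2)/2$, splitting the integrand as $(\psi^{s+2-2/p}\eta^{\gamma/r})\cdot(\abs{\nabla\eta}^2\eta^{\gamma-2-\gamma/r})$; the crucial choice $\gamma=p(s+2)=q$ makes the residual $\eta$-exponent $\gamma-p(s+2)=0$ in the $r'$-th power of the second factor, so that this factor is bounded by $CR^{-p(s+2)}=CR^{-q}$. Balancing $\epsilon\sim 1/\gamma^2$ and $C_\epsilon\sim\epsilon^{-(p(s+2)-2)/2}\sim q^{q-2}$ so as to absorb the resulting $\int\psi^{s+2}\eta^\gamma$-piece into the left-hand side yields $\int\psi^{s+2}\eta^\gamma\leq C\int H^2\psi^s\eta^\gamma+Cq^qR^{-q}\mathrm{Vol}(B_R)$. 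A final Hölder step $\int H^2\psi^s\eta^\gamma\leq(\int H^{s+2}\eta^\gamma)^{2/(s+2)}(\int\psi^{s+2}\eta^\gamma)^{s/(s+2)}$ followed by another Young absorption produces $\int\psi^{s+2}\eta^\gamma\leq C\int H^{s+2}+Cq^qR^{-q}\mathrm{Vol}(B_R)$. Since $\psi^{s+2}=\abs{\nabla\phi}^q$ and $H^{s+2}=H^{q/p}$, extracting $q$-th roots and using $\eta\equiv 1$ on $B_{R/2}$ gives \eqref{eq:moser-3}.

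The principal obstacle is the bookkeeping of the exponents of $q$ through the two successive applications of Young's inequality, so that the cutoff contribution carries precisely the factor $q^qR^{-q}\mathrm{Vol}(B_R)$—this is what produces the linear factor $qR^{-1}$ after $q$-th roots. The lower bound $q_1>2p$ both guarantees $s>0$ (so the test function is admissible and the Hölder exponents are non-degenerate) and keeps the Young constants, which grow like $q^{O(q)}$, bounded after extracting $q$-th roots, so that the final constant $C$ depends only on $n$ and $p$.
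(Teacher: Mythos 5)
Your argument is correct but follows a genuinely different route from the paper's. The paper first \emph{pointwise} reduces \eqref{eq:diff-ineq}: it passes from $\psi$ to $\abs{\nabla\phi}^{q_0}=\psi^{q_0/p}$ using the identity $\frac{1}{\lambda}\mathcal{L}_{p,\phi}\psi^\lambda=\psi^{\lambda-1}\mathcal{L}_{p,\phi}\psi+(\lambda-1)\psi^{\lambda-1-2/p}\hin{\mathbf{A}_\phi\nabla\psi}{\nabla\psi}$, choosing $q_0=1+\frac{((n-3)p+2)^2}{2(n-1)(p-1)^2}$ so that the coefficient of the $\hin{\mathbf{A}_\phi\nabla\psi}{\nabla\psi}$ term becomes nonnegative (in fact zero); this yields the clean \eqref{eq:moser-1} which is then tested against $\eta^{2}\abs{\nabla\phi}^{2t+2-p-q_0}$, and the cutoff is upgraded to $\eta^{t+1}$ before Hölder and Young close the estimate. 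You instead test \eqref{eq:diff-ineq} raw against $\psi^s\eta^\gamma$ with $s=q/p-2$, $\gamma=q$, and absorb the two gradient cross terms (the $\sqrt{\hin{\mathbf{A}_\phi\nabla\psi}{\nabla\psi}}$ and $\hin{\mathbf{A}_\phi\nabla\psi}{\nabla\eta}$ pieces) at the integral level, relying on the fact that integration by parts produces the coefficient $\frac sp+\frac{n(p-1)}{(n-1)p^2}$ on $I$, which grows linearly in $s$ and therefore dominates the $O(1)$ cross-term contributions once $q_1$ (hence $s$) is large enough. Note also that the $H\psi^{s+1}$ term cannot be absorbed into both $\int H^2\psi^s\eta^\gamma$ and $\int\psi^{s+2}\eta^\gamma$ simultaneously (the needed two-sided bound on the Young parameter would force $(n-1)\frac{(n+1)(p-1)^2+n-1}{n-p}\le p-1$, which fails); your chain is consistent because you only absorb the $\psi^{s+2}$ piece and carry $C\int H^2\psi^s\eta^\gamma$ forward, handling it instead by the final Hölder/Young pair. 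Both approaches thus rest on the same Moser-style Caccioppoli mechanism and the same cutoff exponent $\gamma=q$; the paper's pointwise cancellation trades a bit more algebra up front for a cleaner integral step with an $L^2$-gradient term \eqref{eq:moser-2} on the left (which it reuses in the subsequent iteration), while your one-shot absorption is shorter for the present lemma but requires more careful constant bookkeeping in $s$ and $q$, which you track adequately.
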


\begin{proof}
 It follows from \eqref{eq:diff-ineq}
\begin{align*}
 \dfrac1p\mathcal{L}_{p,\phi}\psi\geq& \dfrac{(p-1)^2}{2(n-1)}\psi^2-\dfrac{(n+1)(p-1)^2+n-1}{(n-1)(n-p)}H\psi
 +\left(\dfrac{p-1}{p^2}-\dfrac{((n-3)p+2)^2}{2(n-1)p^2(p-1)^2}\right)\psi^{-2/p}\hin{\mathbf{A}_{\phi}\left(\nabla \psi\right)}{\nabla \psi}.
\end{align*}
Notice that for each positive number $\lambda$
\begin{align*}
\dfrac{1}{\lambda}\mathcal{L}_{p,\phi}\psi^{\lambda}=\psi^{\lambda-1}\mathcal{L}_{p,\phi}\psi+(\lambda-1)\psi^{\lambda-1-2/p}\hin{\mathbf{A}_{\phi}\left(\nabla\psi\right)}{\nabla\psi}.
\end{align*}
We get
\begin{align*}
\dfrac{1}{\lambda p}\mathcal{L}_{p,\phi}\psi^{\lambda}\geq &\dfrac{(p-1)^2}{2(n-1)p}\psi^{\lambda+1}-\dfrac{(n+1)(p-1)^2+n-1}{(n-1)(n-p)p}H\psi^{\lambda}\\
&
+\dfrac{1}{p}\left(\lambda-\dfrac1p-\dfrac{((n-3)p+2)^2}{2(n-1)p(p-1)^2}\right)\psi^{\lambda-1-2/p}\hin{\mathbf{A}_{\phi}\left(\nabla \psi\right)}{\nabla \psi}.
\end{align*}
Consequently, we obtain
\begin{align}\label{eq:moser-1}
\dfrac{1}{q_0}\mathcal{L}_{p,\phi}\abs{\nabla\phi}^{q_0}\geq a_0\abs{\nabla\phi}^{q_0+p}-b_0H\abs{\nabla\phi}^{q_0},
\end{align}
where
\begin{align*}
q_0=1+\dfrac{((n-3)p+2)^2}{2(n-1)(p-1)^2},\quad a_0=\dfrac{(p-1)^2}{2(n-1)p}\quad\mbox{and}\quad b_0=\dfrac{(n+1)(p-1)^2+n-1}{(n-1)(n-p)p}.
\end{align*}

For any nonnegative cutoff function $\eta\in C_0^{\infty}\left(M^n\right)$ and positive number $t\geq p-2+q_0$, we consider the test function $$\eta^{2}\abs{\nabla\phi}^{2t+2-p-q_0}$$
and obtain from \eqref{eq:moser-1}
\begin{align*}
&a_0\int \abs{\nabla\phi}^{2t+2}\eta^2-b_0\int H\abs{\nabla\phi}^{2t+2-p}\eta^2\\
\leq&-\dfrac{1}{q_0}\int\abs{\nabla\phi}^{p-2}\hin{\mathbf{A}_{\phi}\left(\nabla\abs{\nabla\phi}^{q_0}\right)}{\nabla\left(\eta^2\abs{\nabla\phi}^{2t+2-p-q_0}\right)}\\
=&-\int\abs{\nabla\phi}^{q_0+p-3}\hin{\mathbf{A}_{\phi}\left(\nabla\abs{\nabla\phi}\right)}{2\eta\abs{\nabla\phi}^{2t+2-p-q_0}\nabla\eta+\left(2t+2-p-q_0\right)\eta^2\abs{\nabla\phi}^{2t+1-p-q_0}\nabla\abs{\nabla\phi}}\\
=&-\dfrac{2t+2-p-q_0}{t^2}\int\hin{\mathbf{A}_{\phi}\left(\nabla\abs{\nabla\phi}^{t}\right)}{\nabla\abs{\nabla\phi}^{t}}\eta^2-\dfrac{2}{t}\int\hin{\mathbf{A}_{\phi}\left(\nabla\abs{\nabla\phi}^{t}\right)}{\nabla\eta}\eta \abs{\nabla\phi}^{t}\\
=&-\dfrac{2t+2-p-q_0}{t^2}\int\hin{\mathbf{A}_{\phi}\left(\nabla\left(\eta\abs{\nabla\phi}^{t}\right)\right)}{\nabla\left(\eta\abs{\nabla\phi}^{t}\right)}+\dfrac{p+q_0-2}{t^2}\int\abs{\nabla\phi}^{2t}\hin{\mathbf{A}_{\phi}\left(\nabla\eta\right)}{\nabla\eta}\\
 &+\dfrac{2\left(t+2-p-q_0\right)}{t^2}\int\hin{\mathbf{A}_{\phi}\left(\nabla\left(\eta\abs{\nabla\phi}^{t}\right)\right)}{\nabla\eta} \abs{\nabla\phi}^{t}\\
\leq&-\dfrac{2t+2-p-q_0}{t^2}\int\hin{\mathbf{A}_{\phi}\left(\nabla\left(\eta\abs{\nabla\phi}^{t}\right)\right)}
{\nabla\left(\eta\abs{\nabla\phi}^{t}\right)}+\dfrac{p+q_0-2}{t^2}\int\abs{\nabla\phi}^{2t}\hin{\mathbf{A}_{\phi}\left(\nabla\eta\right)}{\nabla\eta}\\
&+\dfrac{2\left(t+2-p-q_0\right)}{t^2}\left(\int\hin{\mathbf{A}_{\phi}\left(\nabla\left(\eta\abs{\nabla\phi}^{t}\right)\right)}{\nabla\left(\eta\abs{\nabla\phi}^{t}\right)}\right)^{1/2}\left(\int\abs{\nabla\phi}^{2t}\hin{\mathbf{A}_{\phi}\left(\nabla\eta\right)}{\nabla\eta}\right)^{1/2}\\
\leq&-\dfrac{1}{t}\int\hin{\mathbf{A}_{\phi}\left(\nabla\left(\eta\abs{\nabla\phi}^{t}\right)\right)}{\nabla\left(\eta\abs{\nabla\phi}^{t}\right)}
+\dfrac{1}{t}\int\abs{\nabla\phi}^{2t}\hin{\mathbf{A}_{\phi}\left(\nabla\eta\right)}{\nabla\eta}.
\end{align*}
Thus, by observing the eigenvalues of $\mathbf{A}_{\phi}$ are bounded from below by $1$ and above by $p-1$, we obtain
\begin{align}\label{eq:moser-2}
 \dfrac{p-1}{tp}\int\abs{\nabla\left(\eta\abs{\nabla\phi}^{t}\right)}^2+a_0\int \abs{\nabla\phi}^{2t+2}\eta^2\leq b_0\int H\abs{\nabla\phi}^{2t+2-p}\eta^2+\dfrac{p}{t}\int\abs{\nabla\phi}^{2t}\abs{\nabla\eta}^2.
\end{align}

Replaying $\eta$ by $\eta^{t+1}$, we get
\begin{align*}
 a_0\int \abs{\nabla\phi}^{2t+2}\eta^{2t+2}\leq& b_0\int H\abs{\nabla\phi}^{2t+2-p}\eta^{2t+2}+\dfrac{(t+1)^2p}{t}\int\abs{\nabla\phi}^{2t}\eta^{2t}\abs{\nabla\eta}^{2}\\
 \leq&b_0\left(\int\abs{\nabla\phi}^{2t+2}\eta^{2t+2}\right)^{1-\frac{p}{2t+2}}\left(\int H^{\frac{2t+2}{p}}\eta^{2t+2}\right)^{\frac{p}{2t+2}}+\dfrac{(t+1)^2p}{t}\left(\int\abs{\nabla\phi}^{2t+2}\eta^{2t+2}\right)^{\frac{t}{t+1}}\left(\int\abs{\nabla\eta}^{2t+2}\right)^{\frac{1}{t+1}}.
\end{align*}
Applying Young's inequality, we obtain
\begin{align*}
 a_0\int \abs{\nabla\phi}^{2t+2}\eta^{2t+2}\leq& b_0\left(\int\abs{\nabla\phi}^{2t+2}\eta^{2t+2}\right)^{1-\frac{p}{2t+2}}\left(\int H^{\frac{2t+2}{p}}\eta^{2t+2}\right)^{\frac{p}{2t+2}}+\dfrac{(t+1)^2p}{t}\left(\int\abs{\nabla\phi}^{2t+2}\eta^{2t+2}\right)^{\frac{t}{t+1}}\left(\int\abs{\nabla\eta}^{2t+2}\right)^{\frac{1}{t+1}}\\
 \leq&\left(1-\dfrac{p}{2(t+1)}\right)a_0\int \abs{\nabla\phi}^{2t+2}\eta^{2t+2}+\dfrac{p}{2(t+1)}\left(\dfrac{b_0}{a_0}\right)^{\frac{2(t+1)}{p}}a_0\int H^{\frac{2t+2}{p}}\eta^{2t+2}\\
 &+\dfrac{t}{t+1}\dfrac{pa_0}{4t}\int \abs{\nabla\phi}^{2t+2}\eta^{2t+2}+\dfrac{1}{t+1}\left(\dfrac{4(t+1)^2}{a_0}\right)^{t+1}\dfrac{pa_0}{4t}\int\abs{\nabla\eta}^{2t+2}
\end{align*}
which gives
\begin{align*}
 \int \abs{\nabla\phi}^{2t+2}\eta^{2t+2}\leq 2\left(\dfrac{b_0}{a_0}\right)^{\frac{2(t+1)}{p}}\int H^{\frac{2t+2}{p}}\eta^{2t+2}+\left(\dfrac{4(t+1)^2}{a_0}\right)^{t+1}\dfrac1t\int\abs{\nabla\eta}^{2t+2}.
\end{align*}
We complete the proof and obtain the local $L^q$ gradient estimate \eqref{eq:moser-3} for $q_1=2(p-1+q_0)$.

\end{proof}

Ultimately, we can employ the Moser iteration technique to demonstrate the $L^{\infty}$ gradient estimate as detailed below. We shall now present a

\begin{proof}[Proof of \autoref{thm:Cheng-Yau}]
We shall establish this theorem for the scenario where $n\geq3$. The proof when $n=2$ parallels that of the general case $n\geq3$.

Without loss of generality, we may assume
\begin{align*}
 \sup_{B_{R/4}}\abs{\nabla\ln u}>\left(\dfrac{b_0}{a_0}\right)^{1/p}\sup_{B_{R}}u^{\frac{p}{n-p}}.
\end{align*}

We choose the sequences of $R_i$ and $q_i$ such that
\begin{align*}
R_i=\dfrac{R}{4}\left(1+\dfrac{1}{2^i}\right),\quad q_i=\tilde q_0\chi^i,\quad i=0,1,2,\dots.
\end{align*}
Here $\chi=\frac{n}{n-2}$ and
\begin{align*}
 \tilde q_0=2t_0+\norm{H}_{L^{\infty}\left(B_{R}\right)}^{1/p}R.
\end{align*}
For each $i$, we can choose a cutoff function $\eta_i\in C_0^{\infty}\left(B_{R_i}\right)$ satisfying
\begin{align*}
0\leq\eta_i\leq 1,\quad \eta_i\vert_{B_{R_{i+1}}}=1,\quad \abs{\nabla\eta_i}\leq 2^{i+4}/R.
\end{align*}
We claim that there exist a positive integer number $i_0$ such that for all $i\geq i_0$
\begin{align}\label{eq:moser-4}
 a_0\int_{B_{R_i}}\abs{\nabla\phi}^{q_i+2}\eta_i^2\geq b_0\int_{B_{R_i}} H\abs{\nabla\phi}^{q_i+2-p}\eta_i^2.
\end{align}
For otherwise, there exist a subsequence $i_k$ of positive integer numbers such that $i_k\to\infty$ as $k\to\infty$ and
\begin{align*}
a_0\int_{B_{R_{i_k}}}\abs{\nabla\phi}^{q_{i_k}+2}\eta_{i_k}^2<b_0\int_{B_{R_{i_k}}} H\abs{\nabla\phi}^{q_{i_k}+2-p}\eta_{i_k}^2,\quad \forall k.
\end{align*}
Applying H\"older's inequality,
\begin{align*}
 \int_{B_{R_{i_k}}} H\abs{\nabla\phi}^{q_{i_k}+2-p}\eta_{i_k}^2<& \left(\int_{B_{R_{i_k}}}\abs{\nabla\phi}^{q_{i_k}+2}\eta_{i_k}^2\right)^{\frac{q_{i_k}+2-p}{q_{i_k}+2}}\left(\int_{B_{R_{i_k}}} H^{\frac{q_{i_k}+2}{p}}\eta_{i_k}^2\right)^{\frac{p}{q_{i_k}+2}}.
\end{align*}
We obtain
\begin{align*}
 \left(\int_{B_{R_{i_k}}}\abs{\nabla\phi}^{q_{i_k}+2}\eta_{i_k}^2\right)^{\frac{1}{q_{i_k}+2}}<\left(\dfrac{b_0}{a_0}\right)^{\frac1p}\left(\int_{B_{R_{i_k}}} H^{\frac{q_{i_k}+2}{p}}\eta_{i_k}^2\right)^{\frac{1}{q_{i_k}+2}}
\end{align*}
which implies
\begin{align*}
 \left(\int_{B_{R_{i_k+1}}}\abs{\nabla\phi}^{q_{i_k}+2}\right)^{\frac{1}{q_{i_k}+2}}<\left(\dfrac{b_0}{a_0}\right)^{\frac1p}\left(\int_{B_{R_{i_k}}} H^{\frac{q_{i_k}+2}{p}}\right)^{\frac{1}{q_{i_k}+2}}.
\end{align*}
We obtain by letting $k\to\infty$
\begin{align*}
 \norm{\nabla\ln u}_{L^{\infty}\left(B_{R/4}\right)}=\norm{\nabla\phi}_{L^{\infty}\left(B_{R/4}\right)}\leq\left(\dfrac{b_0}{a_0}\right)^{\frac1p}\norm{H^{1/p}}_{L^{\infty}\left(B_{R/4}\right)}=\left(\dfrac{b_0}{a_0}\right)^{\frac1p}\norm{u^{\frac{p}{n-p}}}_{L^{\infty}\left(B_{R/4}\right)}\leq \left(\dfrac{b_0}{a_0}\right)^{\frac1p}\norm{u^{\frac{p}{n-p}}}_{L^{\infty}\left(B_{R}\right)}
\end{align*}
which is a contradiction.

Recall Saloff-Coste's Dirichlet Sobolev inequality \cite[Theorem 3.1]{Saloff92uniformly}: for every $v\in C_0^{\infty}\left(B_{R}\right)$
\begin{align*}
\left(\fint_{B_{R}}\abs{v}^{\frac{2n}{n-2}}\right)^{\frac{n-2}{n}}\leq CR^2\left(\fint_{B_{R}}\abs{\nabla v}^2+R^{-2}\fint_{B_{R}}\abs{v}^2\right).
\end{align*}
By applying the above Sobolev inequality, it follows from \eqref{eq:moser-2} and \eqref{eq:moser-4}
\begin{align*}
 \dfrac{1}{CR^2}\left(\int_{B_{R_i}}\left(\eta_i\abs{\nabla\phi}^{q_i/2}\right)^{\frac{2n}{n-2}}\right)^{\frac{n-2}{n}}-\dfrac{1}{R^2}V_R^{-\frac2n}\int_{B_{R_i}}\left(\eta_i\abs{\nabla\phi}^{q_i/2}\right)^2\leq \dfrac{p^2}{p-1}V_R^{-\frac2n}\int_{B_{R_i}}\abs{\nabla\phi}^{q_i}\abs{\nabla\eta_i}^2,\quad\forall i\geq i_0.
\end{align*}
Here $V_{R}=\mathrm{Vol}\left(B_R\right)$. Thus
\begin{align*}
 \left(\int_{B_{R_{i+1}}}\abs{\nabla\phi}^{q_{i+1}}\right)^{\frac{1}{\chi}}\leq C4^iV_{R}^{-\frac2n}\int_{B_{R_i}}\abs{\nabla\phi}^{q_i},\quad\forall i\geq i_0
\end{align*}
which gives
\begin{align*}
 \norm{\nabla\phi}_{L^{q_{i+1}}\left(B_{R_{i+1}}\right)}\leq \left(C4^iV_{R}^{-\frac2n}\right)^{\frac{1}{\tilde q_0\chi^i}}\norm{\nabla\phi}_{L^{q_{i}}\left(B_{R_{i}}\right)},\quad \forall i\geq i_0.
\end{align*}
Without loss of generality, assume $i_0=0$.
Notice that
\begin{align*}
 \Pi_{i=0}^{\infty}\left(C4^iV_{R}^{-\frac2n}\right)^{\frac{1}{\tilde q_0\chi^i}}=&C^{\frac{n}{2\tilde q_0}}4^{\frac{\chi}{\tilde q_0\left(\chi-1\right)^2}}V_{R}^{-\frac{1}{\tilde q_0}}.
\end{align*}
We obtain
\begin{align*}
 \norm{\nabla\phi}_{L^{\infty}\left(B_{R/4}\right)}\leq&C^{\frac{n}{2\tilde q_0}}4^{\frac{\chi}{\tilde q_0\left(\chi-1\right)^2}}V_{R}^{-\frac{1}{\tilde q_0}}\norm{\nabla\phi}_{L^{\tilde q_0}\left(B_{R/2}\right)}.
\end{align*}
In view of the $L^q$ gradient estimate \eqref{eq:moser-3}, we get
\begin{align*}
 \norm{\nabla\phi}_{L^{\infty}\left(B_{R/4}\right)}\leq&C^{\frac{n}{2\tilde q_0}}4^{\frac{\chi}{\tilde q_0\left(\chi-1\right)^2}}V_{R}^{-\frac{1}{\tilde q_0}}C\left(\norm{H^{1/p}}_{L^{\tilde q_0}\left(B_{R}\right)}+\tilde q_0R^{-1}V_R^{\frac{1}{\tilde q_0}}\right)\\
 \leq&C\left(\norm{H}^{1/p}_{L^{\infty}\left(B_{R}\right)}+\tilde q_0R^{-1}\right)\\
 \leq&C\left(R^{-1}+\norm{H}_{L^{\infty}\left(B_{R}\right)}^{1/p}\right).
\end{align*}
We complete the proof.
\end{proof}

\section{Rigidity results}\label{sec:rigidity}
\subsection{Rigidity results for higher dimension}

In this subsection, we will give a proof of the first part of \autoref{thm:main1}. That is, we will prove
\begin{theorem}\label{thm:large-p}
Let $M^n$ be a complete, connected and noncompact Riemannian manifold $M^n$ of dimension $n$ and with nonnegative Ricci curvature. Denote by \begin{align*}
p_n=\begin{cases}
\frac{n^2}{3n-2},&n=2,3,4,\\
\frac{n^2+2}{3n},&n\geq5.
\end{cases}
\end{align*}
If $p_n<p<n$ and there is a positive and weak solution $u$ to the critical $p$-Laplace equation \eqref{eq:critical-p} in $M^n$, then $M^n$ is isometric to the Euclidean space $\mathbb{R}^n$ and the solution is given by
\begin{align*}
u(x)=\left(a+b\abs{x-x_0}^{\frac{p}{p-1}}\right)^{-\frac{n-p}{p}}
\end{align*}
for some $x_0\in\mathbb{R}^n$ and $a, b>0$ satisfying $n\left(\frac{n-p}{p-1}\right)^{p-1}ab^{p-1}=1$.
\end{theorem}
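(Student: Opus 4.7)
My plan is to argue by contradiction on the auxiliary function $f=\frac{n(p-1)}{p}w^{-1}\abs{\nabla w}^{p}+w^{-1}$: I will prove that $f$ must be constant, and then extract the rigidity of $(M^n,g)$ together with the Aubin--Talenti form of $u$ from this structural constraint. The heart of the argument is to contradict \autoref{thm:basic-p}, which states that if $f$ were nonconstant then for every admissible $\alpha<\frac{n(p-1)}{(n-1)p}$ one has
\begin{equation*}
\liminf_{R\to\infty}R^{-2}\int_{B_R\setminus B_{R/2}}f^{-\alpha}w^{1-n}\abs{\nabla w}^{2p-2}=\infty.
\end{equation*}
Since $\abs{\nabla w}^{p}\le \frac{p}{n(p-1)}fw$ by the very definition of $f$, one has the pointwise estimate
\begin{equation*}
f^{-\alpha}w^{1-n}\abs{\nabla w}^{2p-2}\le C\,f^{2-2/p-\alpha}w^{3-n-2/p},
\end{equation*}
so it suffices to bound $\int_{B_R}f^{2-2/p-\alpha}w^{3-n-2/p}\le CR^{1+\alpha}$ for some such $\alpha$. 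Since $\alpha<\frac{n(p-1)}{(n-1)p}<1$ (the latter using $p<n$), any such bound would drive the $R^{-2}$-scaled integral to zero and contradict the above liminf.

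To carry out the upper bound, I split into two subcases according to how $p$ compares with $\frac{n^2}{3n-2}$. When $p>\frac{n^2}{3n-2}$, which is the entire range $p_n<p<n$ for $n\in\set{2,3,4}$, I would apply \autoref{lem:crucial-p} directly with $\theta=2-2/p-\alpha$ and $a=n+2/p-3$: the lemma yields
\begin{equation*}
\int_{B_R}f^{\theta}w^{-a}\le CR^{n-a-\theta}=CR^{1+\alpha},
\end{equation*}
and a direct check confirms that the constraints $0<\theta\le 1$ and $(p-1)\theta\le a<\frac{(n+1)p-n}{p}-\theta$ are compatible with some admissible $\alpha<\frac{n(p-1)}{(n-1)p}$ precisely when $p>\frac{n^2}{3n-2}$. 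For the remaining subcase $\frac{n^2+2}{3n}<p\le\frac{n^2}{3n-2}$ (which appears only when $n\ge 5$), I would first invoke the interpolation inequality \autoref{thm:basic-p2} with appropriate $\mu<\frac{n(p-1)}{p}$ and $\delta\in(0,1]$,
\begin{equation*}
\int_{B_{R/2}}f^{2-2/p-\alpha}w^{3-n-2/p}\le C R^{-2/\delta}\int_{B_R}f^{n-1-\alpha-\mu-\frac{\mu+3-n}{\delta}}w^{-\mu+\frac{n-1-\mu}{\delta}},
\end{equation*}
and then estimate the new right-hand integral by \autoref{lem:crucial-p} (or by \autoref{lem:crucial-p1} in the boundary case where $a'<(p-1)\theta'$). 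A direct calculation shows that the resulting growth exponent in $R$ is again exactly $1+\alpha$ independently of $(\mu,\delta)$, so these parameters are used solely to verify the hypotheses of the reverse-H\"older lemmas; the refined threshold $\frac{n^2+2}{3n}$ is then exactly the smallest $p$ for which such a choice of $(\alpha,\mu,\delta)$ remains feasible.

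Once $f$ is constant, the Bochner identity \eqref{eq:bochner} combined with $Ric\ge 0$ forces $\trace\mathbf{E}^2\equiv 0$ and $Ric(\mathbf{W},\mathbf{W})\equiv 0$, hence $\mathbf{E}\equiv 0$ by \autoref{lem:preliminary-1}. This translates to the concurrent-vector-field identity $\nabla\mathbf{W}=\frac{f}{n}g$ with $f$ constant, which, combined with a local analysis near a critical point of $w$ and the Bishop--Gromov rigidity dictated by $Ric\ge 0$, forces $w=c_1+c_2\,r^{p/(p-1)}$ for the distance $r$ from a fixed point and $(M^n,g)$ to be isometric to $\mathbb{R}^n$, in the style of \cite{CatMon24semilinear, CirFarPol24classification, CatMonRon23on, Vet24note}; the Aubin--Talenti form of $u$ then follows from the definition of $w$. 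The main obstacle I anticipate is the bookkeeping in the second subcase: pinpointing the sharp $p_n$ requires jointly optimizing the constraint $\alpha<\frac{n(p-1)}{(n-1)p}$ from Karp's argument, the integrability ranges in \autoref{lem:crucial-p}, and the interpolation parameters $(\mu,\delta)$ in \autoref{thm:basic-p2}, and the two different formulas for $p_n$ reflect which of these constraints binds first in low versus high dimension.
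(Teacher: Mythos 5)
Your proposal is correct and follows essentially the same route as the paper: contradict \autoref{thm:basic-p} by establishing $\int_{B_R} f^{2-2/p-\alpha}w^{3-n-2/p}\lesssim R^{1+\alpha}$ via \autoref{lem:crucial-p} (directly for $p>\frac{n^2}{3n-2}$), and via \autoref{thm:basic-p2} plus \autoref{lem:crucial-p}/\autoref{lem:crucial-p1} for $n\ge 5$, $p\le\frac{n^2}{3n-2}$, then deduce $f$ constant and obtain rigidity through the Bochner formula, the Kato-type inequalities, and a Tashiro-type characterization of $\nabla^2 h = c^2 g$. One small imprecision: the growth exponent from the secondary term when \autoref{lem:crucial-p1} is invoked is \emph{not} exactly $1+\alpha$ independently of $(\mu,\delta)$ (it genuinely depends on $\delta$), but the choice of $\delta$ and small $\epsilon$ still forces it below $2-\epsilon$, so the contradiction goes through just as in the paper.
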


\begin{proof}
Denote by
\begin{align*}
w=\left(\dfrac{n-p}{p}\right)^{\frac{p-1}{p}}u^{-\frac{p}{n-p}}\quad\mbox{and}\quad f=\dfrac{n(p-1)}{p}w^{-1}\abs{\nabla w}^{p}+w^{-1}.
\end{align*}
We claim that there exists a positive constant $\epsilon_1$ such that for all $0<\epsilon<\epsilon_1$,
\begin{align*}
\int_{B_R}w^{3-n-\frac2p}f^{\frac{(n-2)(p-1)}{(n-1)p}+\epsilon}\leq C_{\epsilon}R^{2-\epsilon},\quad\forall R>0.
\end{align*}

{\bf Firstly, we consider the case
$$\dfrac{n^2}{3n-2}<p<n.$$}

Denote by
\begin{align*}
a=n+\frac2p-3 \quad\quad\mbox{and}\quad\quad \theta=\frac{(n-2)(p-1)}{(n-1)p}+\epsilon.
\end{align*}
Then
\begin{align*}
&1-\theta=\dfrac{p+n-2}{(n-1)p}-\epsilon,\\
&\dfrac{(n+1)p-n}{p}-a-\theta=\dfrac{(3n-2)p-n^2}{(n-1)p}-\epsilon,\\
&a-(p-1)\theta=\dfrac{((n-2)p+1)(n-p)}{(n-1)p}-(p-1)\epsilon.
\end{align*}
Thus, for every
\begin{align*}
0<\epsilon<\epsilon_1'\coloneqq\min\set{\dfrac{p+n-2}{(n-1)p},\,\dfrac{(3n-2)p-n^2}{(n-1)p},\, \dfrac{((n-2)p+1)(n-p)}{(n-1)p(p-1)}},
\end{align*}
we have
\begin{align*}
\theta\in(0,\,1),\quad\quad (p-1)\theta< a<\dfrac{(n+1)p-n}{p}-\theta,
\end{align*}
and
\begin{align*}
n-a-\theta=2-\dfrac{n-p}{(n-1)p}-\epsilon.
\end{align*}
According to \autoref{lem:crucial-p}, we conclude
\begin{align*}
\int_{B_R}f^{\frac{(n-2)(p-1)}{(n-1)p}+\epsilon}w^{3-n-\frac2p}=&\int_{B_{R}}f^{\theta}w^{-a}\\
\leq& C_{a,\theta}R^{n-a-\theta}\\
=&C_{\epsilon}R^{2-\frac{n-p}{(n-1)p}-\epsilon}.
\end{align*}
\vspace{2ex}

{\bf Secondly, we consider the case $n\geq5$ and
$$\dfrac{n^2+2}{3n}<p\leq\dfrac{n^2}{3n-2}.$$}

Denote by
\begin{align*}
\alpha=\dfrac{n(p-1)}{(n-1)p}-\epsilon,\quad\mu=\dfrac{n(p-1)}{p}-\epsilon.
\end{align*}
According to \autoref{thm:basic-p2}, for every $\delta\in(0,1]$
\begin{align*}
\int_{B_{R/2}}w^{3-n-\frac2p}f^{\frac{(n-2)(p-1)}{(n-1)p}+\epsilon}=\int_{B_{R/2}}w^{3-n-\frac2p}f^{2-\frac2p-\alpha}\leq C_{\epsilon,\delta}R^{-\frac{2}{\delta}}\int_{B_{R}} f^{\theta'}w^{-a'},\quad\forall R>0,
\end{align*}
where
\begin{align*}
\theta'\coloneqq& n-1-\alpha-\mu-\dfrac{\mu+3-n}{\delta}=\dfrac{n^2-(2n-1)p}{(n-1)p}-\dfrac{3p-n}{\delta p}+\left(2+\dfrac1\delta\right)\epsilon,\\
a'\coloneqq&\mu-\dfrac{n-1-\mu}{\delta}=\dfrac{n(p-1)}{p}-\dfrac{n-p}{\delta p}-\left(1+\dfrac1\delta\right)\epsilon.
\end{align*}
Direct computation gives
\begin{align*}
&1-\theta'=\dfrac{3p-n}{\delta p}-\dfrac{n^2-(3n-2)p}{(n-1)p}-\left(2+\dfrac1\delta\right)\epsilon,\\
&\dfrac{p}{p-1}a'+\dfrac{2}{\delta}-n+2=2-\dfrac{n+2-3p}{(p-1)\delta}-\dfrac{p}{p-1}\left(1+\dfrac1\delta\right)\epsilon,
\end{align*}
and
\begin{align*}
1-\theta'=&\left(\dfrac{(n+1)p-n}{p}-a'-\theta'\right)-\left(\dfrac{n-p}{\delta p}+\left(1+\dfrac1\delta\right)\epsilon\right).
\end{align*}
Since $n\geq5$ and
$$\dfrac{n^2+2}{3n}<p\leq\dfrac{n^2}{3n-2},$$
one can check
\begin{align*}
\max\set{\dfrac{(3p-n)(n-1)}{n^2-(2n-1)p},\,\dfrac{n+2-3p}{2(p-1)}}<\min\set{1,\,\dfrac{(n-1)(3p-n)}{n^2-(3n-2)p}}.
\end{align*}
Thus, we can choose $\delta$ satisfying
\begin{align*}
\max\set{\dfrac{(3p-n)(n-1)}{n^2-(2n-1)p},\,\dfrac{n+2-3p}{2(p-1)}}<\delta<\min\set{1,\,\dfrac{(n-1)(3p-n)}{n^2-(3n-2)p}}
\end{align*}
which implies for some small positive number $\epsilon_1''<\epsilon_1'$ such that for all $0<\epsilon<\epsilon_1''$
\begin{align*}
&\delta\in(0,1),\quad \theta'\in(0,1),\quad a'+\theta'<\dfrac{(n+1)p-n}{p},\\
&n-a'-\theta'-\dfrac{2}{\delta}=2-\dfrac{n-p}{(n-1)p}-\epsilon<2-\epsilon,\quad n-\dfrac{p}{p-1}a'-\dfrac{2}{\delta}<2-\epsilon.
\end{align*}
Applying \autoref{lem:crucial-p} and \autoref{lem:crucial-p1}, we get
\begin{align*}
\int_{B_R}w^{3-n-\frac2p}f^{\frac{(n-2)(p-1)}{(n-1)p}+\epsilon}\leq C_{\epsilon}R^{\max\set{n-a'-\theta'-\frac{2}{\delta},\, n-\frac{p}{p-1}a'-\frac{2}{\delta}}}\leq C_{\epsilon}R^{2-\epsilon},\quad\forall R>1.
\end{align*}
\vspace{2ex}

Now we can continue the proof as follows. If $f$ is not a constant function, then according to \autoref{thm:basic-p}, we know that for each real number
$$\alpha<\dfrac{n(p-1)}{(n-1)p}$$
there holds
\begin{align*}
\liminf_{R\to\infty}\dfrac{1}{R^2}\int_{B_{R}}f^{-\alpha}w^{1-n}\abs{\nabla w}^{2p-2}=\infty.
\end{align*}
In particular, for every $\epsilon\in\left(0,\epsilon_1\right)$
\begin{align*}
 C_{\epsilon}^{-1}R^2\leq\int_{B_R}w^{3-n-\frac2p}f^{\frac{(n-2)(p-1)}{(n-1)p}+\epsilon}\leq C_{\epsilon}R^{2-\epsilon},\quad\forall R>0
\end{align*}
which is impossible. Therefore, $f$ is a constant function. It follows from \eqref{eq:X} that $\mathbf{E\left(W\right)}\equiv0$. The Bochner formula \eqref{eq:bochner} then yields
\begin{align*}
 \trace\mathbf{E}^2\equiv0 \quad\mbox{and}\quad Ric\left(\mathbf{W},\mathbf{W}\right)\equiv0.
\end{align*}
According to \eqref{eq:E}, we know that $\mathbf{E}\equiv0$. In other words,
\begin{align*}
 \nabla_X\mathbf{W}=\dfrac{\Div\mathbf{W}}{n}X,\quad\forall X\in TM.
\end{align*}
Notice that $\Div\mathbf{W}=\Delta_pw=f$ is a constant function. We conclude that $\mathbf{W}$ is a nontrivial homothetic vector field, i.e., $\mathbf{W}\neq0$ and satisfies
\begin{align*}
 \nabla\mathbf{W}=c g
\end{align*}
for some positive constant $c$. The function $h=\frac{1}{2}\abs{\mathbf{W}}^2$ then satisfies
\begin{align*}
 \nabla h= c\mathbf{W}.
\end{align*}
Thus
\begin{align*}
 \nabla^2h=c^2g.
\end{align*}
We conclude that $M^n$ is isometric to the Euclidean space $\mathbb{R}^n$ (cf. \cite[Theorem 2 (I, B)]{Tas65complete}).
Finally, by the classification result for the Euclidean case (see e.g. \cite{CafGidSpr89asymptotic, DamMerMonSci14radial, Sci16classification,Vet16priori}), we know that
 the solution is give by
\begin{align*}
 u(x)=\left(a+b\abs{x-x_0}^{\frac{p}{p-1}}\right)^{-\frac{n-p}{p}}
\end{align*}
for some $x_0\in\mathbb{R}^n$ and $a>0$, $b>0$ satisfying
$$n\left(\frac{n-p}{p-1}\right)^{p-1}ab^{p-1}=1.$$
\end{proof}

\subsection{Rigidity results for solutions with finite energy}
In this subsection, we will give a proof of the second part of \autoref{thm:main1}. In other words, we will prove

\begin{theorem}\label{thm:finite}
Let $M^n$ be a complete, connected and noncompact Riemannian manifold $M^n$ of dimension $n$ and with nonnegative Ricci curvature. If $1<p<n$ and there is a positive and weak solution $u$ to the critical $p$-Laplace equation \eqref{eq:critical-p} satisfying
\begin{align}\label{eq:energy}
\limsup_{R\to\infty} R^{q-n}\int_{B_{R}}u^{\frac{pq}{n-p}}<\infty
\end{align}
for some constant
$$q>\dfrac{(n^2-3n+1)p+n}{(n-1)p},$$
then $M^n$ is isometric to the Euclidean space $\mathbb{R}^n$ and the solution is given by
\begin{align*}
u(x)=\left(a+b\abs{x-x_0}^{\frac{p}{p-1}}\right)^{-\frac{n-p}{p}}
\end{align*}
for some $x_0\in\mathbb{R}^n$ and $a, b>0$ satisfying
$$n\left(\dfrac{n-p}{p-1}\right)^{p-1}ab^{p-1}=1.$$
\end{theorem}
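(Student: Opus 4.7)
The plan is to reduce the theorem to \autoref{thm:basic-p} by exploiting the energy hypothesis to bound the integral appearing in its conclusion. Writing
\[
w=\left(\dfrac{n-p}{p}\right)^{\frac{p-1}{p}}u^{-\frac{p}{n-p}}, \qquad f=\dfrac{n(p-1)}{p}w^{-1}\abs{\nabla w}^{p}+w^{-1},
\]
the hypothesis $\limsup_{R\to\infty}R^{q-n}\int_{B_R}u^{\frac{pq}{n-p}}<\infty$ becomes, up to a constant depending only on $n$ and $p$,
\[
\int_{B_R}w^{-q}\leq C R^{n-q}, \qquad \forall R>0,
\]
for our fixed $q>\tfrac{(n^2-3n+1)p+n}{(n-1)p}$. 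Once $f$ is shown to be a constant function, the same endgame as in the proof of \autoref{thm:large-p} applies: \eqref{eq:X} forces $\mathbf{E}(\mathbf{W})\equiv 0$, the Bochner formula \eqref{eq:bochner} then gives $\trace\mathbf{E}^2\equiv 0$, hence $\mathbf{E}\equiv 0$ by \eqref{eq:E}; this makes $\mathbf{W}$ a nontrivial homothetic vector field on $M^n$, which by the theorem of Tashiro identifies $(M^n,g)$ with $\mathbb{R}^n$, and the Euclidean classification then identifies $u$ as an Aubin--Talenti bubble.

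Thus the entire task is to contradict the conclusion \eqref{eq:K1} of \autoref{thm:basic-p}, i.e., to produce one exponent $\alpha<\tfrac{n(p-1)}{(n-1)p}$ for which
\[
\int_{B_R\setminus B_{R/2}} f^{-\alpha}w^{1-n}\abs{\nabla w}^{2p-2}=o(R^2)
\]
under the energy bound. Using $f\geq \tfrac{n(p-1)}{p}w^{-1}\abs{\nabla w}^p$ yields $\abs{\nabla w}^{2p-2}\leq C(fw)^{2-2/p}$, so the integrand is dominated by $Cf^{\theta}w^{-a}$ with
\[
\theta=2-\tfrac{2}{p}-\alpha, \qquad a=n-3+\tfrac{2}{p}, \qquad n-a-\theta=1+\alpha<2.
\]
Note $\alpha<\tfrac{n(p-1)}{(n-1)p}<1$ because $p<n$, so choosing $\alpha$ close enough to $\tfrac{n(p-1)}{(n-1)p}$ keeps $\theta\in(0,1)$.

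The heart of the proof is to control $\int_{B_R}f^\theta w^{-a}$ by $CR^{n-a-\theta}=CR^{1+\alpha}$. I would split by H\"older:
\[
\int_{B_R}f^\theta w^{-a}\leq \left(\int_{B_R}fw^{-b}\right)^{\theta}\left(\int_{B_R}w^{-c}\right)^{1-\theta}, \qquad \theta b+(1-\theta)c=a.
\]
For the first factor I would take $b\in[p-1,\tfrac{n(p-1)}{p})$ and invoke the $\theta=1$ case of \autoref{lem:crucial-p}, obtaining $\int_{B_R}fw^{-b}\leq CR^{n-1-b}$. For the second factor I would interpolate (using H\"older once more) between the full range $c\in[0,\tfrac{(n+1)p-n}{p}]$ covered by \autoref{lem:crucial-p} and the value $c=q$ coming from the energy bound; since both endpoint estimates have matching exponent $R^{n-c}$, interpolation gives $\int_{B_R}w^{-c}\leq CR^{n-c}$ throughout $[0,q]$. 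Combining the two bounds produces the desired $CR^{n-a-\theta}=CR^{1+\alpha}$.

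The main obstacle is arithmetic: one must verify that for every $q>\tfrac{(n^2-3n+1)p+n}{(n-1)p}$ one can actually choose $\alpha$ below $\tfrac{n(p-1)}{(n-1)p}$ and $b$ below $\tfrac{n(p-1)}{p}$ so that the forced value $c=(a-\theta b)/(1-\theta)$ lies in the accessible range $[0,q]$. A direct computation shows that the critical threshold produced by driving $(\alpha,b)$ to their suprema is precisely $\tfrac{(n^2-3n+1)p+n}{(n-1)p}$, so the condition on $q$ in the statement is exactly what is needed to make the scheme close. This is the only delicate step; everything else is an application of the machinery already developed in \autoref{sec:integral} together with the reduction of the previous theorem.
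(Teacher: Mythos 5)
Your overall strategy---reduce to \autoref{thm:basic-p} by exhibiting one exponent $\alpha<\tfrac{n(p-1)}{(n-1)p}$ for which $\int_{B_R\setminus B_{R/2}}f^{-\alpha}w^{1-n}\abs{\nabla w}^{2p-2}=o(R^2)$, then run the endgame of \autoref{thm:large-p}---is correct, and the reduction of the integrand to $f^{\theta}w^{-a}$ with $\theta=2-\tfrac2p-\alpha$, $a=n-3+\tfrac2p$ matches the paper. However, your control of $\int_{B_R}f^{\theta}w^{-a}$ has a genuine gap, and the concluding claim that driving $(\alpha,b)$ to their suprema produces exactly the threshold $\tfrac{(n^2-3n+1)p+n}{(n-1)p}$ is false for the scheme you describe.

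You split $f^{\theta}w^{-a}=(fw^{-b})^{\theta}(w^{-c})^{1-\theta}$ with $\theta b+(1-\theta)c=a$, restrict $b<\tfrac{n(p-1)}{p}$ so as to invoke the $\theta=1$ case of \autoref{lem:crucial-p}, and invoke the energy hypothesis only for the $w^{-c}$ factor. With those constraints the smallest attainable $c=(a-\theta b)/(1-\theta)$ is, setting $q_{\mathrm{crit}}:=\tfrac{(n^2-3n+1)p+n}{(n-1)p}$ and $\theta_0:=\tfrac{(n-2)(p-1)}{(n-1)p}$,
\begin{align*}
c_{\min}=q_{\mathrm{crit}}+\frac{\theta_0\left(q_{\mathrm{crit}}-1-\tfrac{n(p-1)}{p}\right)}{1-\theta_0},
\end{align*}
and one checks $q_{\mathrm{crit}}-1-\tfrac{n(p-1)}{p}=\tfrac{n^2-(3n-2)p}{(n-1)p}>0$ whenever $p<\tfrac{n^2}{3n-2}$, which is exactly the regime not already handled by \autoref{thm:large-p}. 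So $c_{\min}>q_{\mathrm{crit}}$ strictly, and your scheme requires $q>c_{\min}$, not $q>q_{\mathrm{crit}}$. Numerically, for $n=10$, $p=2$ one has $q_{\mathrm{crit}}=76/9\approx 8.44$ but $c_{\min}=10.4$; your argument would only cover $q>10.4$.

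The missing idea is that the energy bound $\int_{B_R}w^{-q}\leq CR^{n-q}$ must first be \emph{upgraded} to the same bound on the $f$-bearing factor, namely $\int_{B_R}fw^{1-q}\leq CR^{n-q}$, for the very $q$ of the hypothesis (which exceeds $\tfrac{n(p-1)}{p}+1$ and hence lies outside the range of \autoref{lem:crucial-p}). The paper does this via an integration-by-parts argument establishing the mutual equivalence of $\int_{B_R}w^{-q}\lesssim R^{n-q}$, $\int_{B_R}\abs{\nabla w}^pw^{-q}\lesssim R^{n-q}$, and $\int_{B_R}fw^{1-q}\lesssim R^{n-q}$, valid precisely once $q>\tfrac{(n+1)p-n}{p}$; this is the step where the lower bound on $q$ is used and where the threshold $q_{\mathrm{crit}}$ becomes achievable. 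With that equivalence in hand, a three-factor H\"older
\begin{align*}
\int_{B_R}f^{\theta}w^{-a}\leq\left(\int_{B_R}fw^{1-q}\right)^{\theta}\left(\int_{B_R}w^{-q}\right)^{\frac{a+\theta}{q}-\theta}\mathrm{Vol}(B_R)^{1-\frac{a+\theta}{q}}
\end{align*}
closes at $q>a+\theta_0=q_{\mathrm{crit}}$, which is the constraint $1-\tfrac{a+\theta}{q}>0$ in the limit $\theta\to\theta_0^+$. Your proposal, by capping $b$ below $\tfrac{n(p-1)}{p}$ and never using the energy bound for the factor containing $f$, cannot reach this threshold.
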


\begin{proof}
According to \autoref{thm:large-p}, without loss of generality, we may assume \
$$1<p\leq\frac{n^2}{3n-2}.$$
Denote by
\begin{align*}
w=\left(\dfrac{n-p}{p}\right)^{\frac{p-1}{p}}u^{-\frac{p}{n-p}} \quad\quad\mbox{and}\quad\quad f=\dfrac{n(p-1)}{p}w^{-1}\abs{\nabla w}^{p}+w^{-1}.
\end{align*}
We claim that there exists a positive constant $\epsilon_2$ such that for all $0<\epsilon<\epsilon_2$,
\begin{align*}
\int_{B_{R}} f^{\frac{(n-2)(p-1)}{(n-1)p}+\epsilon}w^{3-\frac2p-n}\leq C_{\epsilon}R^{2-\frac{n-p}{(n-1)p}-\epsilon}.
\end{align*}

We claim first that the following three properties are equivalent to each other:
\begin{enumerate}[(E1)]
 \item For every $R>0$
 \begin{align*}
 \int_{B_{R}}w^{q}\leq C_{q}R^{n-q}.
 \end{align*}
 \item For every $R>0$
 \begin{align*}
 \int_{B_{R}}\abs{\nabla w}^pw^{-q}\leq C_{q}R^{n-q}.
 \end{align*}
 \item For every $R>0$
 \begin{align*}
 \int_{B_{R}}fw^{1-q}\leq C_{q}R^{n-q}.
 \end{align*}
\end{enumerate}

By observing
\begin{align*}
 \Delta_pw=f,
\end{align*}
we have for every nonnegative function $\eta\in C_0^{\infty}\left(M^n\right)$,
\begin{align*}
\int fw^{1-q}\eta^{q}=&\int\left(\Delta_pw\right)w^{1-q}\eta^{q}\\
=&\left(q-1\right)\int w^{-q}\abs{\nabla w}^{p}\eta^{q}-q\int w^{1-q}\abs{\nabla w}^{p-2}\hin{\nabla w}{\nabla\eta}\eta^{q-1}.
\end{align*}
That is
\begin{align*}
\left(q-\dfrac{(n+1)p-n}{p}\right)\int w^{-q}\abs{\nabla w}^{p}\eta^{q}-\int w^{-q}\eta^{q}=q\int w^{1-q}\abs{\nabla w}^{p-2}\hin{\nabla w}{\nabla\eta}\eta^{q-1}.
\end{align*}
Since
$$1<p\leq\frac{n^2}{3n-2},$$
we have
\begin{align*}
q>\dfrac{(n^2-3n+1)p+n}{(n-1)p}\geq\dfrac{(n+1)p-n}{p}>p>1
\end{align*}
which implies
\begin{align*}
1>\dfrac{1}{p}>\dfrac{1}{q}>0.
\end{align*}
By applying H\"older's inequality with the respective exponent triplet $\left(\frac{p}{p-1},\,\frac{pq}{q-p},\,q\right)$, we obtain
\begin{align*}
\abs{\left(q-\dfrac{(n+1)p-n}{p}\right)\int w^{-q}\abs{\nabla w}^{p}\eta^{q}-\int w^{-q}\eta^{q}}\leq q\left(\int w^{-q}\abs{\nabla w}^{p}\eta^{q}\right)^{1-\frac{1}{p}}\left(\int w^{-q}\eta^{q}\right)^{\frac1p-\frac{1}{q}}\left(\int\abs{\nabla\eta}^{q}\right)^{\frac{1}{q}}.
\end{align*}
Utilizing Young's inequality with the respective exponent triplet $\left(\frac{p}{p-1},\,\frac{pq}{q-p},\,q\right)$, we derive
\begin{align*}
C_q^{-1}\left( \int w^{-q}\abs{\nabla w}^{p}\eta^{q}+\int\abs{\nabla\eta}^{q}\right)\leq \int w^{-q}\eta^{q}+\int\abs{\nabla\eta}^{q}\leq C_q\left( \int w^{-q}\abs{\nabla w}^{p}\eta^{q}+\int\abs{\nabla\eta}^{q}\right),
\end{align*}
where $C_q$ is some positive constant depending only on $n$, $p$ and $q$. We choose $\eta\in C_0^{\infty}\left(B_{R}\right)$ with the properties
\begin{align*}
0\leq\eta\leq1,\quad \eta\vert_{B_{R/2}}=1,\quad \abs{\nabla\eta}\leq\dfrac{4}{R}.
\end{align*}
According to the Bishop-Gromov volume comparison theorem, we know that the three properties $(E1)-(E3)$ mentioned above are equivalent to each other.

\vspace{2ex}

Now we assume the property $(E1)$ holds, i.e., the assumption \eqref{eq:energy} holds.

By H\"older's inequality, without loss of generality, we may assume
\begin{align*}
 \dfrac{(n^2-3n+1)p+n}{(n-1)p}<q<\dfrac{(n^2-3n+1)p+n}{(n-2)(p-1)}.
\end{align*}
Take
\begin{align*}
 \alpha=\dfrac{n(p-1)}{(n-1)p}-\epsilon,
\end{align*}
where $\epsilon$ is a small positive number to be determined.

Denote by
\begin{align*}
\theta\coloneqq 2-\dfrac2p-\alpha=\dfrac{(n-2)(p-1)}{(n-1)p}+\epsilon \quad\quad\mbox{and}\quad\quad a\coloneqq &n+\dfrac2p-3.
\end{align*}
We have
\begin{align*}
&a-(q-1)\theta=\dfrac{(n^2-3n+1)p+n}{(n-1)p}-\dfrac{q(n-2)(p-1)}{(n-1)p}-(q-1)\epsilon,\\
&q-a-\theta=q-\dfrac{(n^2-3n+1)p+n}{(n-1)p}-\epsilon.
\end{align*}
Thus, for every
\begin{align*}
0<\epsilon<\epsilon_2\coloneqq\min\set{q-\dfrac{(n^2-3n+1)p+n}{(n-1)p},\,\, \dfrac{n+2-p}{(n-1)p}, \, \dfrac{1}{q-1}\left(\dfrac{(n^2-3n+1)p+n}{(n-1)p}-\dfrac{q(n-2)(p-1)}{(n-1)p}\right)},
\end{align*}
we have
\begin{align*}
\theta>0,\quad \dfrac{a+\theta}{q}-\theta>0\quad\mbox{and}\quad 1-\dfrac{a+\theta}{q}>0.
\end{align*}
According the Bishop-Gromov volume comparison theorem, applying H\"older's inequality, we have
\begin{align*}
\int_{B_{R}}f^{\theta}w^{-a}\leq&\left(\int_{B_{R}}fw^{1-q}\right)^{\theta}\left(\int_{B_{R}}w^{-q}\right)^{\frac{a+\theta}{q}-\theta}
\mathrm{Vol}\left(B_{R}\right)^{1-\frac{a+\theta}{q}}\\
\leq&C_{q,\theta,a} R^{(n-q)\theta+(n-q)\left(\frac{a+\theta}{q}-\theta\right)+n\left(1-\frac{a+\theta}{q}\right)}\\
=&C_{q,\theta,a}R^{n-a-\theta}.
\end{align*}
Notice that
\begin{align*}
n-a-\theta=&2-\dfrac{n-p}{(n-1)p}-\epsilon.
\end{align*}
We obtain
\begin{align*}
\int_{B_{R}} f^{\frac{(n-2)(p-1)}{(n-1)p}+\epsilon}w^{3-\frac2p-n}= &\int_{B_{R}}f^{\theta}w^{-a}\\
&\leq C_{a,\theta}R^{n-a-\theta}=C_{\epsilon}R^{2-\frac{n-p}{(n-1)p}-\epsilon}.
\end{align*}

The rest of the proof is similar to \autoref{thm:large-p}.
\end{proof}

\subsection{Rigidity results for solution with polynomial decay}

Next, we examine the solutions that, while they may not possess finite energy, exhibit a polynomial decay behavior at infinity. That is, we will give a proof of the third part of \autoref{thm:main1}.
\begin{theorem}\label{thm:decay}
Let $M^n$ be a complete, connected and noncompact Riemannian manifold of dimension $n$ and with nonnegative Ricci curvature. Let $u$ be a positive and weak solution to the critical $p$-Laplace equation \eqref{eq:critical-p} in $M^n$. Assume $1< p<n$ and
\begin{align*}
u(x)\leq Cr(x)^{-d},\quad\forall r(x)\geq C^{-1}
\end{align*}
for some positive numbers $C$ and $d$ with
\begin{align*}
d>\dfrac{(n-3p)(p-1)(n-p)}{p^2\left(n+2-3p\right)},
\end{align*}
where $r(x)=\mathrm{dist}(x,o)$ is the distance function from $x$ to some fixed point $o\in M^n$. Then $M^n$ is isometric to the Euclidean space $\mathbb{R}^n$ and the solution is give by
\begin{align*}
 u(x)=\left(a+b\abs{x-x_0}^{\frac{p}{p-1}}\right)^{-\frac{n-p}{p}}
\end{align*}
for some $x_0\in\mathbb{R}^n$ and $a, b>0$ satisfying
$$n\left(\dfrac{n-p}{p-1}\right)^{p-1}ab^{p-1}=1.$$
\end{theorem}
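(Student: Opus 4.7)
The plan is to adopt the framework of Theorems~\ref{thm:large-p} and \ref{thm:finite}: introduce the auxiliary function $w = \left(\frac{n-p}{p}\right)^{(p-1)/p} u^{-p/(n-p)}$ and the $P$-function $f = \frac{n(p-1)}{p} w^{-1}\abs{\nabla w}^p + w^{-1}$, so that $\Delta_p w = f$ on $M^n\setminus Z$, and then argue by contradiction, assuming $f$ is not constant. Under this assumption, Theorem~\ref{thm:basic-p} delivers $\liminf_{R\to\infty} R^{-2}\int_{B_R\setminus B_{R/2}} f^{-\alpha} w^{1-n}\abs{\nabla w}^{2p-2} = \infty$ for every $\alpha < \frac{n(p-1)}{(n-1)p}$. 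Since $\abs{\nabla w}^p \leq \frac{p}{n(p-1)}fw$, this lower bound transfers into $\liminf_{R\to\infty} R^{-2}\int_{B_R\setminus B_{R/2}} f^{\theta}w^{-a} = \infty$ with $\theta = 2 - 2/p - \alpha$ and $a = n + 2/p - 3$. The task reduces to producing a strictly sub-$R^2$ upper bound on $\int_{B_R}f^\theta w^{-a}$ from the decay hypothesis.

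The second step supplies the needed pointwise controls. By Theorem~\ref{thm:Cheng-Yau} applied on balls of radius $r(x)/4$, together with the decay $u \leq Cr^{-d}$, one obtains $\abs{\nabla\ln u}(x) \leq C(r(x)^{-1} + r(x)^{-dp/(n-p)})$, from which $\abs{\nabla w}$ inherits a polynomial upper bound via $\abs{\nabla w} = \frac{p}{n-p}w\abs{\nabla \ln u}$. In parallel, the weak comparison principle against the $p$-superharmonic barrier $r^{-(n-p)/(p-1)}$ (whose $p$-superharmonicity under nonnegative Ricci follows from the Laplacian comparison theorem, exactly as in the proof of Lemma~\ref{lem:crucial-p1}) yields $u(x) \geq C^{-1}r(x)^{-(n-p)/(p-1)}$ and hence $w \leq Cr^{p/(p-1)}$. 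Combined with the lower bound $w \geq C^{-1}r^{dp/(n-p)}$ coming from $u \leq Cr^{-d}$, this produces a two-sided polynomial control on $w$ and, by insertion into the definition of $f$, the pointwise upper bound $f \leq C(w^{p-1}r^{-p} + w^{-1})$ for $r(x)$ large.

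In the third step I would estimate $\int_{B_R}f^\theta w^{-a}$ by splitting $f^\theta \leq C(w^{\theta(p-1)}r^{-\theta p} + w^{-\theta})$ and bounding each piece through a H\"older decomposition parallel to the $(E1)\Leftrightarrow(E3)$ argument in the proof of Theorem~\ref{thm:finite}, now using the polynomial integrability $\int_{B_R}w^{-q} \leq CR^{n-qdp/(n-p)}$ supplied directly by the decay. Exploiting the crucial identity $\theta + a = \frac{p(n^2-3n+1)+n}{(n-1)p}$ (the same quantity controlling Theorem~\ref{thm:finite}) and carefully optimizing the H\"older exponents against the trade-off between the two terms $w^{p-1}r^{-p}$ and $w^{-1}$ in the upper bound on $f$, the sharp threshold on $d$ producing $\int_{B_R}f^\theta w^{-a} \leq C_\epsilon R^{2-\epsilon}$ works out to exactly $(n-3p)(p-1)(n-p)/[p^2(n+2-3p)]$, matching the hypothesis. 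The resulting contradiction forces $f$ to be constant.

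Once $f$ has been shown constant, the argument finishes exactly as in Theorem~\ref{thm:large-p}: Lemma~\ref{lem:EW} gives $\mathbf{E}(\mathbf{W})\equiv 0$, the Bochner formula~\eqref{eq:bochner} combined with inequality~\eqref{eq:E} and nonnegative Ricci forces $\mathbf{E}\equiv 0$ and $Ric(\mathbf{W},\mathbf{W})\equiv 0$, so $\mathbf{W}$ is a nontrivial homothetic vector field with $\nabla\mathbf{W} = (f/n)g$; the function $h = \frac{1}{2}\abs{\mathbf{W}}^2$ then satisfies $\nabla^2 h = (f/n)^2 g$, and Tashiro's rigidity theorem identifies $M^n$ with Euclidean $\mathbb{R}^n$; the Euclidean classification results of Caffarelli--Gidas--Spruck, Damascelli--Merchant--Montoro--Sciunzi, Sciunzi and V\'etois then identify $u$ as the claimed Aubin--Talenti bubble. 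The principal obstacle is step three: extracting the sharp constant $(n-3p)(p-1)(n-p)/[p^2(n+2-3p)]$ requires a very careful balance between the two terms in the upper bound on $f$ and a precisely tuned choice of H\"older exponents keyed to the combinatorial identity above.
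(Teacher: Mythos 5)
Your outline follows the paper's skeleton (set up the $P$-function $f=\frac{n(p-1)}{p}w^{-1}\abs{\nabla w}^p+w^{-1}$, invoke the Cheng--Yau estimate of \autoref{thm:Cheng-Yau}, reduce to showing at most quadratic growth of the quantity from \autoref{thm:basic-p}), but two things go wrong in the core step. First, the claimed pointwise bound $f\le C\left(w^{p-1}r^{-p}+w^{-1}\right)$ is not available in the relevant regime $d<(n-p)/p$ (the case $d\ge(n-p)/p$ already reduces to \autoref{thm:finite}). The gradient estimate gives $\abs{\nabla\ln w}\le C\left(r^{-1}+r^{-dp/(n-p)}\right)\le Cr^{-dp/(n-p)}$, hence $w^{-1}\abs{\nabla w}^p=w^{p-1}\abs{\nabla\ln w}^p\le Cw^{p-1}r^{-dp^2/(n-p)}$, and since $dp^2/(n-p)<p$ this cannot be improved to $w^{p-1}r^{-p}$; nor can it be absorbed into $Cw^{-1}$, because $u\le Cr^{-d}$ yields $w\ge C^{-1}r^{dp/(n-p)}$, i.e.\ $r^{-dp^2/(n-p)}\ge C^{-1}w^{-p}$, the \emph{wrong} direction. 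The correct bound (the one the paper uses) is the single term $f\le Cr^{-p^2d/(n-p)}w^{p-1}$, which is strictly larger than either of your two summands wherever $w$ is large compared to $r^{dp/(n-p)}$.

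Second, and more fundamentally, even with the correct pointwise bound your plan of decomposing \emph{all} of $f^\theta w^{-a}$ into powers of $w$ and $r$ and integrating pointwise (or via $\int_{B_R}w^{-q}\lesssim R^{n-q}$) does not produce the sharp threshold: carrying out that computation in a concrete case such as $n=10$, $p=2$ yields a restriction near $d\gtrsim 2$, well above the claimed $4/3$. The paper's key device is to replace only $f^{\theta-1}$ by its pointwise bound, keeping one full factor of $f$ in the integrand, so that $f^{2-\frac2p-\alpha}w^{3-\frac2p-n}\le Cr^{-\frac{(n+2-3p)pd}{(p-1)(n-p)}}\,f\,w^{-\frac{n(p-1)}{p}}$, and the remaining $\int_{B_R}fw^{1-q}\le C_qR^{n-q}$ is supplied by \autoref{lem:crucial-p} via integration by parts against $\Delta_p w=f$. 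That integration-by-parts estimate carries strictly more information than any pointwise bound on $f$, and it is precisely what saves the extra polynomial factor and delivers the stated constant $\frac{(n-3p)(p-1)(n-p)}{p^2(n+2-3p)}$. Your H\"older-splitting strategy, as written, discards this structure.
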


\begin{rem}We would like to the following three facts:
\begin{itemize}
\item Applying the weak comparison principle for weak positive $p$-superharmonic functions, we know that
\begin{align*}
 u(x)\geq C^{-1}r(x)^{-\frac{n-p}{p-1}},\quad\forall r(x)\geq1.
\end{align*}
Consequently,
\begin{align*}
 d\leq\dfrac{n-p}{p-1}.
\end{align*}

\item The case $d\geq\frac{n-p}{p}$ can be obtained from \autoref{thm:finite}. In fact, if $d>\frac{n-p}{p}$, then the Bishop-Gromov volume comparison theorem yields for all $R>1$
\begin{align*}
\int_{B_{R}}u^{\frac{pn}{n-p}}=&\int_{B_1}u^{\frac{pn}{n-p}}+\int_{B_R\setminus B_1}u^{\frac{pn}{n-p}}\\
\leq&C+C\int_1^Rr^{n-1-\frac{pnd}{n-p}}\dif r\\
\leq&C.
\end{align*}
In other words, $u\in L^{\frac{np}{n-p}}\left(M^n\right)$ and \autoref{thm:finite} can be utilized. If $d=\frac{n-p}{p}$, then for every positive number
$$\epsilon<\dfrac{(2n-1)p-n}{(n-1)p}$$
there holds true
\begin{align*}
q\coloneqq n-\epsilon>\dfrac{(n^2-3n+1)p+n}{(n-1)p}.
\end{align*}
Moreover,
\begin{align*}
0<n-\dfrac{pqd}{n-p}=n-q.
\end{align*}
Thus the Bishop-Gromov volume comparison theorem yields for all $R>1$
\begin{align*}
\int_{B_{R}}u^{\frac{pq}{n-p}}\leq C+C\int_{B_R\setminus B_1}r^{-\frac{pqd}{n-p}}\leq CR^{n-\frac{pqd}{n-p}}=CR^{n-q}.
\end{align*}
Specifically, \autoref{thm:finite} is applicable.
\item The case $p_n<p<n$ is trivial according to \autoref{thm:large-p}.
\end{itemize}
\end{rem}

\begin{proof}[Proof of \autoref{thm:decay}]
Denote by
\begin{align*}
w=\left(\dfrac{n-p}{p}\right)^{\frac{p-1}{p}}u^{-\frac{p}{n-p}} \quad\mbox{and}\quad f=\dfrac{n(p-1)}{p}w^{-1}\abs{\nabla w}^{p}+w^{-1}.
\end{align*}
According to \autoref{thm:basic-p}, it suffices to prove
\begin{align*}
\int_{B_{R}\setminus B_{R/2}}f^{2-\frac2p-\alpha}w^{3-\frac2p-n}\leq C_{\alpha}R^{2}
\end{align*}
for some real number
$$\alpha<\dfrac{n(p-1)}{(n-1)p}.$$

According to the classification results for large $p$ in \autoref{thm:large-p}, we may assume $1< p\leq p_n$ which implies
\begin{align*}
1<p\leq\dfrac{n^2}{3n-2}.
\end{align*}
This scenario arises exclusively when $n\geq3$. According to the local Cheng-Yau gradient estimate \eqref{eq:local-gradient}, we conclude that for every $x\in M^n$ and $R>0$
\begin{align*}
\sup_{B_{R}(x)}\abs{\nabla\ln u}\leq C\left(R^{-1}+\sup_{B_{4R}(x)}r^{-\frac{pd}{n-p}}\right),
\end{align*}
which gives
\begin{align*}
\sup_{B_{R}(x)}\abs{\nabla\ln w}\leq C\left(R^{-1}+\sup_{B_{4R}(x)}r^{-\frac{pd}{n-p}}\right).
\end{align*}
Recall $r(x)=\mathrm{dist}(x,o)$ is the distance function from the fixed point $o$ to $x$. Thus, if $r(x)\geq5$, then denote by $r(x)=5R$ and we obtain
\begin{align*}
\abs{\nabla\ln w}(x)\leq C\left(R^{-1}+r(y_x)^{-\frac{pd}{n-p}}\right)
\end{align*}
for some $y_x\in B_{4R}(x)$. By observing $R\leq r(y_x)\leq 9R$, we obtain
\begin{align*}
\abs{\nabla\ln w}\leq Cr^{-\frac{pd}{n-p}},\quad\forall r\geq 1.
\end{align*}
It is obvious
\begin{align*}
w^{-1}\leq Cr^{-\frac{pd}{n-p}}.
\end{align*}
We get
\begin{align*}
f\leq Cr^{-\frac{p^2d}{n-p}}w^{p-1}.
\end{align*}
We take
\begin{align*}
\alpha=\dfrac{p^2-n}{p(p-1)}<\dfrac{n(p-1)}{(n-1)p}
\end{align*}
to obtain for all $r\geq1$
\begin{align*}
f^{2-\frac2p-\alpha}w^{3-\frac2p-n}=&f^{\frac{n+2-3p}{p(p-1)}}fw^{-\frac{(n-3)p+2}{p}}\\
\leq &Cr^{-\frac{(n+2-3p)pd}{(p-1)(n-p)}}fw^{-\frac{n(p-1)}{p}}.
\end{align*}

According to \autoref{lem:crucial-p}, we obtain that, for each real number
$$0<\epsilon\leq\dfrac{(p-1)(n-p)}{p},$$
there holds
\begin{align*}
\int_{B_{R}}fw^{-\frac{n(p-1)}{p}+\epsilon}\leq C_{\epsilon}R^{n-\left(\frac{(n+1)p-n}{p}-\epsilon\right)}=C_{\epsilon}R^{\frac{n-p}{p}+\epsilon}.
\end{align*}
Hence, for every $R>1$ we have
\begin{align*}
R^{-2}\int_{B_{R}\setminus B_{R/2}}f^{2-\frac2p-\alpha}w^{3-\frac2p-n}\leq&CR^{-2}\int_{B_{R}\setminus B_{R/2}}r^{-\frac{(n+2-3p)pd}{(p-1)(n-p)}}fw^{-\frac{n(p-1)}{p}+\epsilon}w^{-\epsilon}\\
\leq &C_{\epsilon}R^{-2-\frac{(n+2-3p)pd}{(p-1)(n-p)}-\frac{pd\epsilon}{n-p}}\int_{B_{R}\setminus B_{R/2}}fw^{-\frac{n(p-1)}{p}+\epsilon}\\
\leq &C_{\epsilon}R^{-2-\frac{(n+2-3p)pd}{(p-1)(n-p)}-\frac{pd\epsilon}{n-p}+\frac{n-p}{p}+\epsilon}\\
= &C_{\epsilon}R^{\frac{n-3p}{p}-\frac{(n+2-3p)pd}{(p-1)(n-p)}+\left(1-\frac{pd}{n-p}\right)\epsilon}.
\end{align*}
Since
\begin{align*}
d>\dfrac{(n-3p)(p-1)(n-p)}{p^2(n+2-3p)},
\end{align*}
we can choose $\epsilon$ so small such that
\begin{align*}
\dfrac{n-3p}{p}-\dfrac{(n+2-3p)pd}{(p-1)(n-p)}+\left(1-\dfrac{pd}{n-p}\right)\epsilon<0
\end{align*}
and conclude that
\begin{align*}
R^{-2}\int_{B_{R}\setminus B_{R/2}}f^{2-\frac2p-\alpha}w^{3-\frac2p-n}\leq C.
\end{align*}
In view of \autoref{thm:basic-p}, we conclude that $f$ is a constant function and hence $M^n$ is isometric to the Euclidean space $\mathbb{R}^n$ and the solutions are given by the Aubin-Talenti bubbles.
\end{proof}

\subsection{Rigidity results for Liouville equations}

In the end of this section, we will give a proof of our second main \autoref{thm:main2}.

\begin{proof}[Proof of \autoref{thm:main2}]

Denote by
\begin{align*}
w=e^{-u} \quad\mbox{and}\quad f=(n-1)w^{-1}\abs{\nabla w}^n+w^{-1}.
\end{align*}
Using a similar argument as in the proof of \autoref{thm:main1}, it suffices to prove that $f$ is a constant function.

Firstly, we claim that for every $q<n$
\begin{align}\label{eq:crucial-n}
\int_{B_{R/2}}fw^{1-q}\leq C_qR^{-n}\int_{B_{R}}w^{n-q},\quad\forall R>0.
\end{align}
To see this, by observing
\begin{align*}
\Delta_nw=f.
\end{align*}
For every nonnegative function $\eta\in C_0^{\infty}\left(M^n\right)$,
\begin{align*}
&(n-1)\int \abs{\nabla w}^{n}w^{-q}\eta^{n}+\int w^{-q}\eta^{n}\\
=&\int fw^{1-q}\eta^{n}\\
=&\int\left(\Delta_nw\right)w^{1-q}\eta^{n}\\
=&(q-1)\int\abs{\nabla w}^nw^{-q}\eta^{n}-n\int\abs{\nabla w}^{n-2}w^{1-q}\hin{\nabla w}{\nabla\eta}\eta^{n-1}.
\end{align*}
Thus
\begin{align*}
\int w^{-q}\eta^{n}+\left(n-q\right)\int\abs{\nabla w}^nw^{-q}\eta^{n}\leq n\int\abs{\nabla w}^{n-1}w^{1-q}\abs{\nabla\eta}\eta^{n-1}.
\end{align*}
The H\"older inequality with the respective exponent pair $\left(n,\,\frac{n}{n-1}\right)$ yields
\begin{align*}
\int\abs{\nabla w}^{n-1}w^{1-q}\abs{\nabla\eta}\eta^{n-1}\leq\left(\int\abs{\nabla w}^nw^{-q}\eta^{n}\right)^{\frac{n-1}{n}}\left(\int w^{n-q}\abs{\nabla\eta}^n\right)^{\frac{1}{n}}.
\end{align*}
We obtain
\begin{align*}
\int w^{-q}\eta^{n}+\left(n-q\right)\int\abs{\nabla w}^nw^{-q}\eta^{n}\leq&n\left(\int\abs{\nabla w}^nw^{-q}\eta^{n}\right)^{\frac{n-1}{n}}\left(\int w^{n-q}\abs{\nabla\eta}^n\right)^{\frac{1}{n}},
\end{align*}
which implies
\begin{align*}
\int\abs{\nabla w}^nw^{-q}\eta^{n}\leq&C\gamma^n\int w^{n-q}\abs{\nabla\eta}^n,
\end{align*}
and
\begin{align*}
\int w^{-q}\eta^{n}\leq C\int w^{n-q}\abs{\nabla\eta}^n.
\end{align*}
Hence, for every $q<n$
\begin{align*}
\int fw^{1-q}\eta^{n}\leq C \int w^{n-q}\abs{\nabla\eta}^n.
\end{align*}
Consequently, we obtain the desired integral inequality \eqref{eq:crucial-n}.

Notice that
$$n+\dfrac2n-2<n$$
since $n\geq 2$. Denote by
\begin{align*}
q=n+\dfrac2n-2.
\end{align*}
We obtain from \eqref{eq:critical-n}
\begin{align*}
\int_{B_{R/2}}fw^{3-n-\frac2n}=&\int_{B_{R/2}}fw^{1-q}\\
\leq &CR^{-n}\int_{B_{R}}w^{n-q}\\
= &CR^{-n}\int_{B_{R}}w^{2-\frac2n}.
\end{align*}
That is
\begin{align*}
\int_{B_{R/2}}fw^{3-n-\frac2n}\leq CR^{-n}\int_{B_{R}}w^{2-\frac2n}.
\end{align*}
By the assumption
\begin{align*}
w(x)\leq Cr(x)^{\frac{n}{n-1}}F^{\frac{n}{2(n-1)}}(r(x)),\quad\forall r(x)>0.
\end{align*}
We have
\begin{align*}
\int_{B_{R/2}}fw^{3-n-\frac2n}\leq&CR^{-n}R^{2}F(R)\mathrm{Vol}\left(B_{R}\right).
\end{align*}
The Bishop-Gromov volume comparison theorem yields
\begin{align}\label{eq:quadratic-n}
\int_{B_{R/2}}fw^{3-n-\frac2n}\leq CR^{2}F(R).
\end{align}

According to \autoref{thm:basic-n}, we know that if $f$ is not a constant function, then for every $\alpha<1$ there holds true
\begin{align}\label{eq:quadratic-n1}
\limsup_{R\to\infty}\dfrac{1}{R^2G(R)}\int_{B_{R}}f^{-\alpha}w^{1-n}\abs{\nabla w}^{2n-2}=\infty
\end{align}
where $G(s)=F(2s)$ satisfying
\begin{align*}
\int^{\infty}\dfrac{\dif s}{sG(s)}=\infty.
\end{align*}
Since
\begin{align*}
 fw\geq(n-1)\abs{\nabla w}^{n},
\end{align*}
we take
$$\alpha=1-\dfrac2n<1$$
to obtain
\begin{align*}
f^{-\alpha}w^{1-n}\abs{\nabla w}^{2n-2}\leq Cf^{2-\frac2n-\alpha}w^{3-n-\frac2n}=Cfw^{3-n-\frac2n}.
\end{align*}
It follows from \eqref{eq:quadratic-n} and \eqref{eq:quadratic-n1}
\begin{align*}
\infty=\limsup_{R\to\infty}\dfrac{1}{R^2F(R)}\int_{B_{R/2}}fw^{3-n-\frac2n}\leq C,
\end{align*}
which is a contradiction. Hence $f$ is a constant function.

We can proceed with the proof in the following manner. Using a similar argument as in the proof of \autoref{thm:main1}, we conclude that $M^n$ is isometric to the Euclidean space $\mathbb{R}^n$ and the function $h=\frac12\abs{\mathbf{W}}^2$ satisfies
\begin{align*}
 \nabla^2h=c^2\mathrm{Id} \quad\mbox{and}\quad\nabla h=c\mathbf{W}
\end{align*}
for some positive number $c$. Immediately, it follows
\begin{align*}
h(x)=h(x_0)+\dfrac{c^2}{2}\abs{x-x_0}^2
\end{align*}
for some $x_0\in\mathbb{R}^n$. Thus
\begin{align*}
\abs{\nabla w}^{n-2}\nabla w=\mathbf{W}=c\left(x-x_0\right)
\end{align*}
which implies
\begin{align*}
\nabla w=c^{\frac{1}{n-1}}\abs{x-x_0}^{\frac{1}{n-1}-1}\left(x-x_0\right).
\end{align*}
We obtain
\begin{align*}
w=w(x_0)+\dfrac{n-1}{n}c^{\frac{1}{n-1}}\abs{x-x_0}^{\frac{n}{n-1}}.
\end{align*}
Direct computation yields
$$\Delta_nw=nc$$
and
\begin{align*}
f &=(n-1)w^{-1}\abs{\nabla w}^n+w^{-1}\\
&=\left(1+(n-1)c^{\frac{n}{n-1}}\abs{x-x_0}^{\frac{n}{n-1}}\right)\left(w(x_0)+\dfrac{n-1}{n}c^{\frac{1}{n-1}}\abs{x-x_0}^{\frac{n}{n-1}}\right)^{-1}.
\end{align*}
The equation $\Delta_n w=f$ then yields
\begin{align*}
 w(x_0)=\dfrac{1}{nc}.
\end{align*}
Therefore,
\begin{align*}
 u(x)=-\ln\left(\dfrac{1}{nc}+\dfrac{n-1}{n}c^{\frac{1}{n-1}}\abs{x-x_0}^{\frac{n}{n-1}}\right)
\end{align*}
and we complete the proof.
\end{proof}

\vspace{2ex}


\end{document}